\def\@tocline#1#2#3#4#5#6#7{\relax
  \ifnum #1>\c@tocdepth \else
    \par \addpenalty\@secpenalty\addvspace{#2}%
    \begingroup \hyphenpenalty\@M
    \@tempdima\if@empty{#4}{\csname r@tocindent\number#1\endcsname}{#4}\relax
    \parindent\z@ \leftskip#3\relax \advance\leftskip\@tempdima\relax
    \rightskip\@pnumwidth plus4em \parfillskip-\@pnumwidth
    #5\leavevmode\hskip-\@tempdima
    \ifcase #1 \or\or \hskip 1em \or \hskip 2em \else \hskip 3em \fi%
    #6\nobreak\hfill\hbox to\@pnumwidth{\@tocpagenum{#7}}\par
    \nobreak
    \endgroup
  \fi}
\title{Fedder-type criterion for quasi-$F^e$-splitting and quasi-$F$-regularity}
\author{Shou Yoshikawa}
\address{Institute of Science Tokyo, Tokyo 152-8551, Japan}
\email{yoshikawa.s.9fe9@m.isct.ac.jp}
\newcommand{\Z}{\mathbb{Z}} \newcommand{\Q}{\mathbb{Q}} \newcommand{\R}{\mathbb{R}} \newcommand{\F}{\mathbb{F}}
  \newcommand{\cO}{\mathcal{O}}
\newcommand{\m}{\mathfrak{m}}   
\renewcommand{\mod}{\ \textrm{mod}\ }
\DeclareMathOperator{\Spec}{Spec}
\DeclareMathOperator{\Hom}{Hom}
\DeclareMathOperator{\Ext}{Ext}
\DeclareMathOperator{\fpt}{fpt}
\DeclareMathOperator{\bfpt}{bfpt}
\DeclareMathOperator{\qfpt}{qfpt}
\newcommand{\wt}{\widetilde}
\newcommand{\cond}[1]{\textup{(#1)}}
\newcommand{\paren}[1]{\left( #1 \right)}
\newcommand{\eqtag}[1]{\overset{(\star_{#1})}{=}}
\theoremstyle{plain}
\newtheorem{theorem}{Theorem}[section]
\newtheorem{thm}[theorem]{Theorem}
\newtheorem{proposition}[theorem]{Proposition}
\newtheorem{lemma}[theorem]{Lemma}
\newtheorem{corollary}[theorem]{Corollary}
\newtheorem{claim}[theorem]{Claim}
\newtheorem*{claim*}{Claim}
\newtheorem{theoremA}{Theorem}
\newtheorem{corollaryA}[theoremA]{Corollary}
\theoremstyle{definition}
\newtheorem{definition}[theorem]{Definition}
\newtheorem{example}[theorem]{Example}
\newtheorem{notation}[theorem]{Notation}
\newtheorem*{setup*}{Setup}
\theoremstyle{remark}
\newtheorem*{ackn}{Acknowledgements}
\numberwithin{equation}{section}
\crefname{theorem}{Theorem}{Theorems}
\crefname{proposition}{Proposition}{Propositions}
\crefname{lemma}{Lemma}{Lemmas}
\crefname{corollary}{Corollary}{Corollaries}
\crefname{conjecture}{Conjecture}{Conjectures}
\crefname{claim}{Claim}{Claims}
\crefname{notation}{Notation}{Notations}
\crefname{theoremA}{Theorem}{Theorems}
\crefname{corollaryA}{Corollary}{Corollaries}
\crefname{example}{Example}{Examples}
\newenvironment{claimproof}[0]
  {%
   \paragraph{\it Proof.}%
  }
  {%
    \hfill$\blacksquare$%
  }
\numberwithin{equation}{section}
\begin{document}

\begin{abstract}
We study quasi-$F^e$-split and quasi-$F$-regular singularities, which generalize Yobuko’s quasi-$F$-splitting. We establish Fedder-type criteria that characterize these properties for hypersurfaces. These criteria offer explicit tools for computation and verification. As an application, we construct a counterexample to the inversion of adjunction for quasi-$F$-regularity and we compute the quasi-$F$-split threshold of the cone over the ordinary cusp.
\end{abstract}

\maketitle

\setcounter{tocdepth}{1}

\section{Introduction}

Quasi-$F$-splitting, introduced by Yobuko, is a class of singularities in positive characteristic with various applications in algebraic geometry. In recent years, it has also been studied in connection with birational geometry. A Fedder-type criterion, proved in \cite{kty}, provides a method for determining when a hypersurface is quasi-$F$-split, and many concrete examples have been constructed using this approach.
To further develop the theory of quasi-$F$-splitting and its generalizations, the notions of \emph{quasi-$F^e$-splitting} and \emph{quasi-$F$-regularity} were introduced in \cite{TWY24}, and their foundational properties were investigated in \cite{KTTWYY3}.

In this paper, we establish Fedder-type criteria for quasi-$F^e$-splitting and quasi-$F$-regularity. These results provide practical tools for verifying these properties in explicit examples. We begin by recalling the definitions of quasi-$F^e$-splitting and quasi-$F$-regularity.

\begin{definition}\textup{(\cite{TWY24}*{Sections~3.1 and 3.5})}
Let $S$ be an $F$-finite Gorenstein local $\F_p$-algebra, and let $e, n \geq 1$ be integers.
\begin{enumerate}
    \item For $c \in S$, define a $W_n(S)$-module $Q^e_{S,c,n}$ as the pushout of the diagram
    \[
    \begin{tikzcd}
    W_n(S) \ar[r] \ar[d] & F^e_*W_n(S) \arrow[r,"\cdot F^e_*\text{[}c\text{]}"] & F^e_*W_n(S) \\
    S & &
    \end{tikzcd}
    \]
    When $c = 1$, we write $Q^e_{S,n} := Q^e_{S,1,n}$. The induced maps $S \to Q^e_{S,c,n}$ and $S \to Q^e_{S,n}$ are denoted by $\Phi^e_{S,c,n}$ and $\Phi^e_{S,n}$, respectively.
    
    \item We say that $S$ is \emph{$n$-quasi-$F^e$-split} if the map
    \[
    (\Phi^e_{S,n})^* \colon \Hom_{W_n(S)}(Q^e_{S,n},W_n\omega_S) \to \Hom_{W_n(S)}(S,W_n\omega_S)
    \]
    is surjective, where $W_n\omega_S$ is a dualizing sheaf of $S$ (see \cite{KTTWYY3}*{Section~9}).
    
    \item We say that $S$ is \emph{$n$-quasi-$F$-regular} if, for every non-zero divisor $c \in S$, there exists $e_0 \geq 1$ such that the map
    \[
    (\Phi^{e'}_{S,c,n})^* \colon \Hom_{W_n(S)}(Q^{e'}_{S,c,n},W_n\omega_S) \to \Hom_{W_n(S)}(S,W_n\omega_S)
    \]
    is surjective for all $e' \geq e_0$, where $W_n\omega_S$ is a dualizing sheaf of $S$ (see \cite{KTTWYY3}*{Section~9}).
\end{enumerate}
\end{definition}
\noindent
To state our main theorems, we first introduce some notation.
Let $(R, \mathfrak{n})$ be an $F$-finite regular local $\mathbb{F}_p$-algebra. It is known \cite{DeJong95}*{Remark~1.2.3} that $R$ admits a Frobenius lift $(A, \phi)$, that is, $(A, \mathfrak{m})$ is a $p$-torsion free local ring such that $A/pA \cong R$, and $\phi \colon A \to A$ is a ring homomorphism satisfying $\phi(a) \equiv a^p \mod p$. Let $u$ be a generator of $\Hom_A(\phi_* A, A)$ as a $\phi_*A$-module.

Our first main result gives a Fedder-type criterion for quasi-$F^e$-splitting:

\begin{theoremA}\label{intro:Fedder-qf^es}\textup{(\cref{thm:structure of sigma'})}
Let $n, e \geq 1$ be integers and $f \in A/p^n$ be a non-zero divisor. Then $R/fR$ is $n$-quasi-$F^e$-split if and only if there exists $g \in A$ such that:
\begin{itemize}
    \item[(D1)] $u^{e+r-1}(\phi^{e+r-1}_*g) \in (p^r)$ for $1 \leq r \leq n-1$,
    \item[(D2)] $g$ admits a decomposition
    \[
    g = g_0 + p g_1 + \cdots + p^{n-1} g_{n-1}
    \]
    such that $u^r(\phi^r_*g_r) \in (f^{p^{e+n-r-1} - 1})$ for $0 \leq r \leq n-1$,
    \item[(D3)] $u^{e+n-2}(\phi_*^{e+n-2}g) \notin (\mathfrak{m}^{[p]}, p^n)$.
\end{itemize}
\end{theoremA}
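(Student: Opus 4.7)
The plan is to translate the surjectivity of $(\Phi^e_{S,n})^*$ into the existence of a Grothendieck-trace representative $g\in A/p^n$ via duality, and then decode the three conditions (D1)--(D3) as the hypersurface-descent conditions in the Witt-vector setting. The overall strategy mirrors the Fedder and Kawakami--Takamatsu--Yoshikawa criteria, but at the cost of significantly more bookkeeping because of the interaction of the $p$-adic filtration on $W_n$, the $e$-fold Frobenius, and the pushout defining $Q^e_{S,n}$.

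First, I would unravel the definition. By the universal property of the pushout, a $W_n(S)$-linear map $Q^e_{S,n}\to W_n\omega_S$ is the same as a pair $(\alpha,\beta)$ with $\alpha\colon S\to W_n\omega_S$ and $\beta\colon F^e_*W_n(S)\to W_n\omega_S$ agreeing on $W_n(S)$, so surjectivity of $(\Phi^e_{S,n})^*$ amounts to extending a fixed generator $\alpha$ of $\Hom_{W_n(S)}(S,W_n\omega_S)$ (using the Gorenstein hypothesis on $S$). Next, I use the Frobenius lift $A$ to represent things explicitly: the iterated trace $u^{e+n-1}\colon\phi^{e+n-1}_*A\to A$ serves as the Grothendieck trace for the composite of $n-1$ Verschiebungs with $e$ Frobenii on $A/p^n$, so by the $W_n$-analog of Fedder duality from \cite{KTTWYY3}, a candidate map $\beta$ is encoded by an element $g\in A/p^n$ through the formula $x\mapsto u^{e+n-1}(\phi^{e+n-1}_*(gx))$, modulo the hypersurface-descent relations imposed by $f$.

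Second, I would read off each condition. Condition (D2) is the descent: $\beta$ must factor through $F^e_*W_n(S)=F^e_*W_n(R)/(\text{Witt ideal of }f)$, and on a decomposition $g=g_0+pg_1+\cdots+p^{n-1}g_{n-1}$ the $r$-th Witt-level component imposes $u^r(\phi^r_*g_r)\in (f^{p^{e+n-r-1}-1})$, where the classical Fedder exponent at level $n-r$ is shifted by the $e$-fold Frobenius. Condition (D1) is the pushout compatibility: the two legs into $Q^e_{S,n}$ force $\beta\circ F^e$ to coincide, after composing with $\alpha$, with the natural quotient $W_n(S)\to S$, and unwinding the iterated trace on the $r$-th $p$-adic component yields $u^{e+r-1}(\phi^{e+r-1}_*g)\in (p^r)$. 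Condition (D3) is the Fedder-type non-degeneracy guaranteeing that the resulting map does not lie in $\mathfrak{n}\cdot\Hom_{W_n(S)}(S,W_n\omega_S)$, so that it genuinely splits $\Phi^e_{S,n}$ rather than producing a non-generator.

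The main obstacle will be the careful Witt-level bookkeeping needed to match the exponents $p^{e+n-r-1}-1$ in (D2) and $p^r$ in (D1), together with the verification that the dictionary $g\leftrightarrow \beta$ is compatible with all relations at every truncation level. A convenient organizing device is the diagram
\[
\begin{tikzcd}
A/p^n \ar[r] \ar[d] & \phi^e_*(A/p^n) \ar[r,"{\cdot g}"] & \phi^e_*(A/p^n) \ar[d,"u^{e+n-1}"] \\
R/fR \ar[rr, dashed] & & A/p^n,
\end{tikzcd}
\]
where (D2) is the condition that the composition factors through $A/(p^n,f)\cong R/fR$ on the source, (D1) encodes compatibility of the top row with the left vertical map through the pushout, and (D3) is the condition that the induced dashed arrow lifts a generator of $\Hom_{W_n(R/fR)}(R/fR, W_n\omega_{R/fR})$. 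Equivalence with $n$-quasi-$F^e$-splitting then follows by inverting this correspondence in both directions, with the forward direction---producing a compatible $g$ from an abstract splitting by lifting inductively along the $p$-adic filtration on $A/p^n$---being the more delicate part.
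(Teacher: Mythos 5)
Your high-level strategy matches the paper's: use an adjunction/inversion-of-adjunction step to pass from $Q^e_{R/fR,n}$ to maps on $F^e_*W_n(R)$, parametrize such maps by elements $g$ of $A/p^n$ through the Frobenius lift, and then decode the pushout condition, the $f$-descent condition, and the non-degeneracy condition into (D1), (D2), (D3). However, there are concrete gaps and errors in how you set this up.

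First, the encoding formula $x\mapsto u^{e+n-1}(\phi^{e+n-1}_*(gx))$ is not correct, and the organizing diagram does not typecheck ($u^{e+n-1}$ has source $\phi^{e+n-1}_*(A/p^n)$, not $\phi^e_*(A/p^n)$; the dashed arrow should land in $W_n\omega_R$, not $A/p^n$; and the bottom-left entry plays the role of a Witt ring, which $R/fR$ is not). The actual parametrization $\Psi^e_n$ in the paper is $\psi^e_{n,g}(F^e_*\alpha)=\lambda_n\circ u^e\bigl(\phi^e_*(g\,\varphi_{n-1}(\alpha))\bigr)$, where $\varphi_{n-1}\colon W_n(A/p)\to\phi^{n-1}_*A/p^n$ is the top ghost component and $\lambda_n=\varphi_{n-1}^*\circ\iota_n$ is its Grothendieck trace. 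In particular only $u^e$, not $u^{e+n-1}$, appears in the map itself; the higher powers of $u$ arise only after composing with the projection $W_n\omega_R\to\omega_R$, which is relevant to extracting (D3) but is not the encoding. Without the ghost-component map $\varphi_{n-1}$ and the induced trace $\lambda_n$, you cannot relate $W_n(A/p)$-modules to $A/p^n$-modules, and the whole dictionary breaks down.

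Second, the genuinely delicate technical content is only gestured at. You write "by the $W_n$-analog of Fedder duality" and "inverting this correspondence in both directions," but this is exactly where the work is: one must prove that $\Psi^e_n\colon\phi^{e+n-1}_*A/p^n\to\Hom_{W_n(R)}(F^e_*W_n(R),W_n\omega_R)$ is surjective, and characterize its kernel as $\{g: u^{r-1}(\phi^{r-1}_*g)\in(p^r)\text{ for }1\le r\le n\}$ (the paper's Theorem~\ref{thm:structure of dual'}, proved by induction on $n$ using the exact sequence coming from $V$ and the trace identity $\Psi^e_n(\phi_*^{e+n-1}(pg))=T\circ\Psi^e_{n-1}(\phi_*^{e+n-2}u(\phi_*g))$). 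Only with the kernel characterization in hand can one show that the pushout condition on $\psi^e_{n,g}$ is equivalent to (D1) (Proposition~\ref{prop:pushout condition}), and the proof of the equivalence of the $f$-descent condition with the decomposition in (D2) (Proposition~\ref{prop:including condition 2}) requires a separate induction on $n$ using the trace identities (C1)--(C3). You identify this bookkeeping as "the main obstacle" but do not say how to overcome it; as written, the proposal assumes precisely what needs to be proved. Finally, the adjunction/inversion-of-adjunction step you invoke at the start (reducing quasi-$F^e$-splitting of $R/fR$ to conditions on a map $F^e_*W_n(R)\to W_n\omega_R$) is itself a nontrivial input requiring Cohen--Macaulayness of $Q^e_{R,n}$ to get surjectivity of the restriction map; the paper proves this separately (Propositions~\ref{CM} and~\ref{prop:qF^esplit-adj-inv-adj}), and your sketch does not address it.
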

\noindent
As a consequence, we obtain the following necessary and sufficient conditions, which are useful for explicit computations:

\begin{corollaryA}\label{intro:nec-suff-cond}\textup{(\cref{nec-suff-cond})}
Let $n, e \geq 1$ be integers and $f \in A/p^n$ be a non-zero divisor.
\begin{enumerate}
    \item If there exists $g \in f^{p^{e+n-1}-1} A/p^n$ satisfying conditions \textup{(D1)} and \textup{(D3)}, then $R/fR$ is $n$-quasi-$F^e$-split.

    \item Define a sequence of ideals $\{I^e_n\}$ of $A$ inductively as follows. Set
    \[
    I^e_1 := f^{p-1} u^{e-1}(\phi_*^{e-1}f^{p^{e-1} - 1} A),
    \]
    and for each $n \geq 1$,
    \[
    I^e_{n+1} := u\left(\phi_*^{}\left( \Delta_1(f^{p-1}) ( I^e_n \cap u^{-1}(pA) )\right) \right) + f^{p-1}A.
    \]
    If $R/fR$ is $n$-quasi-$F^e$-split, then $I^e_n \not\subseteq (\mathfrak{m}^{[p]},p)$.

    \item Define another sequence of ideals $\{I'_n\}$ of $A$ as follows. Set
    \[
    I'_1 := f^{p-1} \mathfrak{m} ,
    \]
    and for each $n \geq 1$,
    \[
    I'_{n+1} := u\left(\phi_*^{}\left( \Delta_1(f^{p-1}) ( I'_n \cap u^{-1}(pA) )\right) \right) + f^{p-1}A.
    \]
    If $R/fR$ is not $F$-pure and $I'_n \subseteq (\mathfrak{m}^{[p]},p)$, then $R$ is not $n$-quasi-$F^2$-split.
\end{enumerate}
\end{corollaryA}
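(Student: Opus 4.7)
The plan is to reduce all three parts to Theorem~A.

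For part (1), I would lift $g$ to $\hat g \in A$ with $\hat g \in f^{p^{e+n-1}-1}A$ and use the trivial decomposition $g_0 = \hat g$, $g_r = 0$ for $r \geq 1$. Condition (D2) is then immediate (the $r=0$ case is the divisibility hypothesis, higher $r$ are trivial), while (D1) and (D3) are assumed for $\hat g$; Theorem~A then yields that $R/fR$ is $n$-quasi-$F^e$-split.

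For part (2), I would prove by induction on $n$ the key lemma: for every $g \in A$ satisfying (D1) and (D2) at level $n$, one has $u^{e+n-2}(\phi_*^{e+n-2}g) \in I^e_n + (p)$. Its contrapositive, combined with Theorem~A, is exactly the assertion. For the base $n=1$, write $g_0 = f^{p^{e}-1}h$ (the (D2) hypothesis for $n=1$), and use the congruence $f^{p^{e}-1} \equiv \phi^{e-1}(f^{p-1})\cdot f^{p^{e-1}-1} \pmod{p}$ together with the iterated projection formula $u^{e-1}(\phi_*^{e-1}(\phi^{e-1}(a)b)) = a\,u^{e-1}(\phi_*^{e-1}b)$ to conclude $u^{e-1}(\phi_*^{e-1}g_0) \in f^{p-1}\,u^{e-1}(\phi_*^{e-1}(f^{p^{e-1}-1}A)) + (p) = I^e_1 + (p)$. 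For the inductive step, decompose $g$ at level $n+1$ as $g_0 + p\tilde g$; the piece $g_0 \in f^{p^{e+n-1}-1}A$ contributes the summand $f^{p-1}A$ of $I^e_{n+1}$ (via the base-case argument), while $\tilde g$ is related by (D1) at $r=n$ and by the induction hypothesis to an element of $I^e_n \cap u^{-1}(pA)$. The correction $\Delta_1(f^{p-1})$ appearing in the recursion for $I^e_{n+1}$ records precisely how Witt-vector arithmetic couples the $p$-adic parts of $g$ inside the pushout defining $Q^e_{S,n}$.

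For part (3), since $R/fR$ is not $F$-pure, the classical Fedder criterion (the $n=e=1$ case of Theorem~A) yields $u(\phi_*(f^{p-1}A)) \subseteq \mathfrak{m}$ in $A$, hence $I^2_1 = f^{p-1}u(\phi_*(f^{p-1}A)) \subseteq f^{p-1}\mathfrak{m} = I'_1$. The recursions defining $I^2_n$ and $I'_n$ are identical, and each operation appearing in them (intersection with $u^{-1}(pA)$, multiplication by $\Delta_1(f^{p-1})$, the map $u \circ \phi_*$, and addition of $f^{p-1}A$) preserves ideal inclusions. A straightforward induction then yields $I^2_n \subseteq I'_n$ for all $n$, so $I'_n \subseteq (\mathfrak{m}^{[p]},p)$ forces $I^2_n \subseteq (\mathfrak{m}^{[p]},p)$, and part (2) concludes that $R/fR$ is not $n$-quasi-$F^2$-split.

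The main obstacle is the inductive step in (2): verifying that the element built from $g_0$ and $\tilde g$ truly lands in $u(\phi_*(\Delta_1(f^{p-1})(I^e_n \cap u^{-1}(pA)))) + f^{p-1}A$ requires careful bookkeeping of both the iterated projection formula and the precise role of $\Delta_1$ in the Witt-vector arithmetic underlying Theorem~A.
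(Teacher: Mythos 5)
Parts (1) and (3) of your proposal match the paper's argument and are correct: part (1) is the trivial decomposition $g_0=g$, $g_r=0$ fed into Theorem A, and part (3) is the inclusion $I^2_1 \subseteq I'_1$ from non-$F$-purity followed by an induction that the identical recursions preserve inclusions, then an appeal to part (2).

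Part (2) is where the gap lies, and it is twofold. First, your key lemma is misstated in a way that breaks the final implication. Condition (D1) at $r=n-1$ already forces $u^{e+n-2}(\phi_*^{e+n-2}g) \in (p^{n-1})$, and (D3) says this element lies outside $(\mathfrak m^{[p]}, p^n)$; that is a statement about the quotient $u^{e+n-2}(\phi_*^{e+n-2}g)/p^{n-1}$ lying outside $(\mathfrak m^{[p]}, p)$. Your lemma concludes only $u^{e+n-2}(\phi_*^{e+n-2}g)\in I^e_n + (p)$. Since $(p)\subseteq(\mathfrak m^{[p]},p)$ but $(p)\not\subseteq(\mathfrak m^{[p]},p^n)$ for $n\geq 2$, the hypotheses $I^e_n\subseteq(\mathfrak m^{[p]},p)$ and (D3) are not contradictory under your formulation, so no contradiction is produced. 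The lemma must instead track the $p^{n-1}$-divided element (equivalently, assert $u^{e+n-2}(\phi_*^{e+n-2}g)\in p^{n-1}I^e_n+(p^n)$). Second, you flag as the ``main obstacle'' the verification that the decomposition pieces land in $u\bigl(\phi_*(\Delta_1(f^{p-1})(I^e_n\cap u^{-1}(pA)))\bigr)+f^{p-1}A$, but you do not resolve it. This is precisely the content of the paper's Lemma~\ref{lem:e=1_case}, which produces elements $h_0,\dots,h_{n-1}$ with $h_0\in(f^{p-1},p)$, $g\equiv f^{p^n-p}h_0\pmod p$, $h_{i+1}\equiv u(\phi_*(\Delta_1(f^{p-1})h_i))\pmod{(f^{p-1},p)}$, and $h_{n-1}\notin\mathfrak m^{[p]}$; the paper imports this from \cite{kty}*{Theorem~4.8, Lemmas~4.9--4.10} after reducing to $e=1$ via \cref{prop:psi^e_n is module hom}(4). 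Without an independent derivation of this $\Delta_1$ recursion from the Witt-vector structure, the induction you sketch remains a plan rather than a proof.
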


Our second main result provides a Fedder-type criterion for quasi-$F$-regularity:

\begin{theoremA}\label{intro:Fedder-qFr}\textup{(\cref{fedder-qFr})}
Let $n, e \geq 1$ be integers, $f \in A/p^n$ be a non-zero divisor, $t \in \tau(R/fR)$, and $c \in A/p^n$ such that
\[
(A \to R/fR)(c) \in (t^4) \cap (R/fR)^\circ.
\]
Then $R/fR$ is $n$-quasi-$F$-regular if and only if there exists $g \in A$ such that $g$ satisfies condition \textup{(D2)} and $g c^{p^n - 1}$ satisfies conditions \textup{(D1)} and \textup{(D3)} in \cref{intro:Fedder-qf^es}.
\end{theoremA}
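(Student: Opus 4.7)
The plan is to derive this from a twisted analog of \cref{intro:Fedder-qf^es} combined with a reduction that uses the element $t \in \tau(R/fR)$ to collapse the quantifier ``for all non-zero divisors $c'$, for all $e' \geq e_0$'' appearing in the definition of $n$-quasi-$F$-regularity into a single condition attached to our chosen $(e,c)$. Concretely, I would first establish a \emph{twisted Fedder criterion}: for any non-zero divisor $c \in A/p^n$ and integers $e, n \geq 1$, the map $(\Phi^e_{R/fR, c, n})^*$ is surjective if and only if there exists $g \in A$ satisfying (D2) such that $g \cdot c^{p^n - 1}$ satisfies (D1) and (D3). This would be obtained by rerunning the argument behind \cref{intro:Fedder-qf^es} while tracking the effect of the multiplication $\cdot F^e_*[c]$ used in building $Q^e_{S, c, n}$: the Teichm\"uller lift $[c] \in W_n(A)$ contributes the factor $c^{p^n-1}$ precisely to the conditions (D1) and (D3) coming from the pushout structure, while (D2), which reflects the $p$-adic decomposition of $g$ along $W_n(R) \to R$, is unaffected.

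Granting this, the forward direction proceeds as follows. Assuming $R/fR$ is $n$-quasi-$F$-regular, there is $e_0$ such that $(\Phi^{e'}_{R/fR, c, n})^*$ is surjective for all $e' \geq e_0$, for the specific $c$ in the statement. For $e' \geq \max(e, e_0)$, the twisted criterion produces a witness $g_{e'}$ at level $e'$; the task is to convert $g_{e'}$ into a witness $g$ at our fixed level $e$. This is where the hypothesis $(A \to R/fR)(c) \in (t^4) \cap (R/fR)^{\circ}$ becomes essential: since $t \in \tau(R/fR)$, multiplication by $t$ can absorb one extra application of the Cartier operator $u$, and the fourfold appearance of $t$ allows us simultaneously to (i) move from $e'$ back to $e$ by contracting copies of $u$, (ii) maintain condition (D1) after the twist by $c^{p^n-1}$, (iii) preserve condition (D3) modulo $(\mathfrak{m}^{[p]}, p^n)$, and (iv) control the $n$-fold decomposition in (D2) at each intermediate level.

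The reverse direction is more straightforward. Given $g$ satisfying the conditions at $(e, c)$, I would construct, for any non-zero divisor $c'$ and any sufficiently large $e'$, a witness $g'$ for $(e', c')$ via the twisted criterion. Using $t \in \tau(R/fR)$, the Frobenius closure relation between $c$ and $c'$ in $R/fR$ lifts modulo $p^n$ to an identity relating $c^{p^n-1}$ and $c'^{p^n-1} \cdot h^{p^{e'-e}}$ for some correction $h \in A$. Taking $g'$ to be an appropriate product of $g$ with a Frobenius-twisted lift of $h$, one verifies (D1), (D2), (D3) by invoking (D2) for $g$ and pushing the extra factors through $u$ using the slack furnished by $(t^4)$.

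The principal obstacle is the bookkeeping required to make the test-ideal input interact cleanly with the $n$-step $p$-adic decomposition of (D2) and with the growing Frobenius exponents in (D1) and (D3). The exponent $4$ on $t$ encodes precisely the number of independent error terms one must absorb, and pinning them down compatibly with the inductive ideals $I^e_n$ and $I'_n$ of \cref{intro:nec-suff-cond} is where the bulk of the work lies.
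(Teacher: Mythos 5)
Your opening step — a ``twisted Fedder criterion'' asserting that for fixed $(e,c)$ the surjectivity of $(\Phi^{e}_{R/fR,c,n})^*$ is equivalent to the existence of $g$ satisfying \cond{D2} with $g c^{p^{n-1}}$ satisfying \cond{D1} and \cond{D3} — is sound and matches what the paper does, namely combining \cref{qFr-inv-adj} with the $\Psi^e_n$-machinery of \cref{thm:structure of dual'}, \cref{prop:pushout condition}, and \cref{prop:including condition 2}. (One remark: the exponent is $p^{n-1}$, the $(n{-}1)$-th ghost component of $[c]$, not $p^{n}-1$; you carried over a typo from the stated theorem.)

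The genuine gap is the reduction from the definition of $n$-quasi-$F$-regularity, which quantifies over \emph{all} non-zero divisors $c'$ and all sufficiently large $e'$, to the single pair $(e,c)$. You propose to establish this from scratch: ``multiplication by $t$ absorbs one extra application of $u$,'' the four factors of $t$ are allocated to four error terms, a ``Frobenius closure relation between $c$ and $c'$ lifts modulo $p^n$,'' and so on. These are heuristics, not arguments. In the paper this reduction is precisely \cref{qFr-test-elem} together with \cref{qFr-inv-adj}, which simply cite \cite{KTTWYY3}*{Theorem~4.31}; that cited theorem is where all the test-ideal and $(t^4)$ bookkeeping actually lives. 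Making your sketch precise in the Witt-vector setting — where an extra application of $u$ changes the $p$-adic depth and not merely the Frobenius exponent, so the interaction with the length-$n$ decomposition of \cond{D2} is genuinely delicate — is a substantial piece of work. Without either carrying it out or invoking the existing result, the proof is incomplete.

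A secondary issue: you treat $e$ as fixed and propose converting a witness at level $e' \geq e_0$ down to level $e$. In the paper, $e$ is produced by \cref{qFr-inv-adj} together with $g$ — it is existentially quantified in the ``only if'' direction — so no such down-transfer is needed. You have created an extra (and nontrivial) problem for yourself that the paper's formulation avoids.
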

\noindent
As an application, we construct a counterexample to inversion of adjunction for quasi-$F$-regularity:

\begin{example}\textup{(\cref{example}~(2))}
Let $A := \mathbb{Z}_{(p)}[[x, y, z, w, u, s]]$, $R := A/p$, $f := x y s^2 + z w v^2 + y^3 w + x^3 z$, and $p = 2$.  
Then $R/fR$ is not quasi-$F$-split, while $R/(f, s)$ is $2$-quasi-$F$-regular.
\end{example}

In the final part of this paper (Section~6), we apply our Fedder-type criteria to the study of quasi-$F$-pure thresholds (cf.~\cref{defn:threshold}). 
In particular, we establish a criterion (Theorem~6.9) for determining whether a pair $(X,D)$ is quasi-$F^e$-split, which plays a crucial role in explicit computations of thresholds. 
As a concrete application, we compute the quasi-$F$-split threshold of the cone over the ordinary cusp (Example~6.10), which can be stated as follows:
\begin{theoremA}\label{intro:thm:cusp}\textup{(\cref{cusp})}
Let $k$ be a perfect field.
Let $R=k[[x,y,z]]$ and $D=\mathrm{div}(x^3+y^2z)$.
Then $\qfpt(X;D)=5/6$.
\end{theoremA}

This example highlights a striking phenomenon: while the $F$-pure threshold of the cone depends on the characteristic of the base field, the quasi-$F$-split threshold takes a constant value $5/6$, independent of the characteristic. 
This illustrates the robustness of quasi-$F$-splittings compared to classical $F$-singularity thresholds, and demonstrates how our criterion provides effective tools for analyzing such phenomena.

\begin{ackn}
The author wishes to express his gratitude to Ryo Ishizuka, Teppei Takamastu, and Tatsuro Kawakami for valuable discussion.
He is also grateful to Hiromu Tanaka for helpful comments. 
The author was supported by JSPS KAKENHI Grant number JP24K16889.
\end{ackn}

\section{Preliminary}

\subsection{Notation}\label{ss-notation}
In this subsection, we summarize the notation and  terminologies used in this paper. 

\begin{enumerate}
\item Throughout this paper, we fix a prime number $p$ and we set $\F_p := \Z/p\Z$. 
\item For a ring $R$, the set of non-zero divisors are denoted by $R^\circ$.
\item For a ring $R$ of characteristic $p>0$, $F: R\to R$ denotes the absolute Frobenius ring homomorphism, that is, $F(r)=r^p$ for every $r \in R$.  
\item For a ring $R$ of characteristic $p>0$, we say that $R$ is {\em $F$-finite} if $F$ is finite. 
\item For a ring $R$ and a ring homomorphism $\phi \colon R \to R$, we say that $\phi$ is a \emph{lift of Frobenius} if $\phi(a) \equiv a^p \mod p$ for every $a \in R$. 
\item For a ring $R$, \emph{ring of Witt vectors} over $R$ is denoted by $W(R)$.
Furthermore, \emph{Witt vectors over $R$ of length $n$} is denoted by $W_n(A)$ (see \cite{serre}*{II} for details).
Moreover, the \emph{shift map} $W_{n}(R) \to W_{n+1}(R)$ is denoted by $V$ and the \emph{restriction map} $W_{n+1}(R) \to W_n(R)$ is denoted by $Res$ for every integer $n \geq 1$.
\end{enumerate}

\subsection{Quasi-$F^e$-splitting and quasi-$F$-regularity}

\begin{definition}\textup{\cite{TWY24}*{Section~3.1,3.5}}
Let $(R,\m)$ be an $F$-finite Gorenstein local $\F_p$-algebra.
Let $e,n \geq 1$ be integers.
\begin{enumerate}
    \item For $c \in R$, we define a $W_n(R)$-module $Q^e_{R,c,n}$ as the pushout of the diagram
    \[
    \begin{tikzcd}
    W_n(R) \ar[r] \ar[d] & F^e_*W_n(R) \arrow[r,"\cdot F^e_*\text{[}c\text{]}"] & F^e_*W_n(R) \\
    R. & &
    \end{tikzcd}
    \]
    If $c=1$, the module $Q^e_{R,c,n}$ is denoted by $Q^e_{R,n}$.
    Furthermore, the homomorphism $R \to Q^e_{R,c,n}$ and $R \to Q^e_{R,n}$ induced by the pushout diagram are denoted by $\Phi^e_{R,c,n}$ and $\Phi^e_{R,n}$, respectively.
    \item We say that $R$ is \emph{$n$-quasi-$F^e$-split} if the homomorphism
    \[
    (\Phi^e_{R,n})^* \colon \Hom_{W_n(R)}(Q^e_{R,n},W_n\omega_R) \to \Hom_{W_n(R)}(R,W_n\omega_R)
    \]
    is surjective.
    \item We say that $R$ is \emph{$n$-quasi-$F$-regular} if for every $c \in R^\circ$, there exists $e_0 \geq 1$ such that the homomorphism
    \[
    (\Phi^{e'}_{R,c,n})^* \colon \Hom_{W_n(R)}(Q^{e'}_{R,c,n},W_n\omega_R) \to \Hom_{W_n(R)}(R,W_n\omega_R)
    \]
    is surjective for every $e' \geq e_0$.
    \item For $f,c \in R$, we define a $W_n(R)$-module $Q^{f,e}_{R,c,n}$ as the pushout of the diagram
    \[
    \begin{tikzcd}
    W_n(fR) \ar[r] \ar[d] & F^e_*W_n(fR) \arrow[r,"\cdot F^e_*\text{[}c\text{]}"] & F^e_*W_n(fR) \\
    fR. & &
    \end{tikzcd}
    \]
    If $c=1$, the module $Q^{f,e}_{R,c,n}$ is denoted by $Q^{f,e}_{R,n}$.
\end{enumerate}
\end{definition}

\begin{proposition}\label{CM}
Let $(R,\m)$ be an $F$-finite Gorenstein local $\F_p$-algebra.
If $R$ is $F$-pure, then $Q^e_{R,c,n}$ is Cohen-Macaulay for an element $c \in R^\circ$ and integers $e,n \geq 1$.
\end{proposition}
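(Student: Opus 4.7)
The plan is to deduce Cohen-Macaulayness of $Q^e_{R,c,n}$ from the short exact sequence arising from the pushout, combined with an inductive argument on $n$. Since $R$ is $F$-pure (hence reduced), the Teichm\"{u}ller lift $[c] \in W_n(R)$ of $c \in R^\circ$ is a non-zero divisor and Frobenius on $W_n(R)$ is injective; hence the composition $\alpha \coloneq (\cdot F^e_*[c]) \circ F^e$ is injective. Combined with the surjection $\pi \colon W_n(R) \twoheadrightarrow R$, this yields the short exact sequence of $W_n(R)$-modules
\[
0 \to W_n(R) \xrightarrow{(\pi,\, -\alpha)} R \oplus F^e_*W_n(R) \to Q^e_{R,c,n} \to 0.
\]
A standard induction on $n$ using $0 \to W_{n-1}(R) \xrightarrow{V} W_n(R) \to R \to 0$ shows that $W_n(R)$, and hence $F^e_*W_n(R)$, are Cohen-Macaulay $W_n(R)$-modules of dimension $d = \dim R$.

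For the base case $n = 1$, the pushout $Q^e_{R,c,1}$ is isomorphic to $F^e_*R$ as a module, which is Cohen-Macaulay since Frobenius is finite. For the inductive step, the long exact sequence in local cohomology applied to the SES above --- using Cohen-Macaulayness of the first two terms --- gives $H^i_\mathfrak{m}(Q^e_{R,c,n}) = 0$ for $i < d - 1$ and reduces the Cohen-Macaulayness of $Q^e_{R,c,n}$ to injectivity of the induced map $\varphi \colon H^d_\mathfrak{m}(W_n(R)) \to H^d_\mathfrak{m}(R) \oplus H^d_\mathfrak{m}(F^e_*W_n(R))$. The $F$-pure hypothesis enters here: the splitting of $R \to F_*R$ lifts to a $W_n(R)$-linear splitting of $W_n(R) \to F^e_*W_n(R)$, and therefore $H^d(F^e)$ is injective on top local cohomology.

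The main obstacle I anticipate is establishing injectivity of $\varphi$. The second component $H^d(\alpha) = H^d(\cdot F^e_*[c]) \circ H^d(F^e)$ involves multiplication by $[c]$, which is generally \emph{not} injective on top local cohomology, so injectivity of $H^d(F^e)$ alone is insufficient. One must combine this with injectivity on $\ker H^d(\pi) = \image H^d(V)$, using the Witt-vector identity $[c] \cdot V(y) = V([c^p]\, y)$ to compute $H^d(\alpha)$ on this kernel. An alternative, perhaps cleaner, route is to proceed by induction: construct an auxiliary short exact sequence of the form $0 \to M \to Q^e_{R,c,n} \to Q^e_{R,c^p,n-1} \to 0$ with $M$ Cohen-Macaulay of dimension $d$, by applying the snake lemma to the restriction map $W_n(R) \to W_{n-1}(R)$, and then conclude from the inductive hypothesis --- noting that the twist $c \mapsto c^p$ arises precisely from the identity above, and that $c^p$ remains a non-zero divisor.
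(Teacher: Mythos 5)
Your ``alternative route'' at the end is in essence the paper's argument, but it is left as a sketch and skips the key step. Two remarks on it. First, the restriction $Res\colon W_n(R)\to W_{n-1}(R)$ is a ring homomorphism with $Res([c])=[c]$, so it commutes with multiplication by $[c]$ and the quotient is $Q^e_{R,c,n-1}$, not $Q^e_{R,c^p,n-1}$; the identity $[c]\cdot V(y)=V([c^p]y)$ governs what $[c]$ does to $V^{n-1}R$, i.e.\ the \emph{kernel} of restriction, not the target (this is harmless for the induction, since $c^p$ is again a non-zero divisor). The substantive gap is that $M$ is never identified and its Cohen--Macaulayness is never proved, and that is exactly where the $F$-pure hypothesis must do its work. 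The paper exhibits $M\cong F^{n-1}_*B^e_R$ via $[c]\cdot V^{n-1}$, where $B^e_R=\cokernel(F^e\colon R\to F^e_*R)$. Because $R$ is $F$-pure, $0\to R\to F^e_*R\to B^e_R\to 0$ splits, so $B^e_R$ is a direct summand of $F^e_*R$ and hence Cohen--Macaulay of dimension $\dim R$; with both ends of $0\to F^{n-1}_*B^e_R\to Q^e_{R,c,n}\to Q^e_{R,c,n-1}\to 0$ Cohen--Macaulay of full dimension, the long exact sequence in local cohomology closes the induction. Without this identification your proof is not complete.

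Your first route also contains an unjustified assertion beyond the $[c]$ obstruction you flag yourself: the claim that an $F$-pure splitting of $R\to F_*R$ lifts to a $W_n(R)$-linear splitting of $W_n(R)\to F^e_*W_n(R)$. A module-theoretic splitting $\sigma\colon F_*R\to R$ is not a ring map, so it does not functorially produce a $W_n(R)$-linear map $F_*W_n(R)\to W_n(R)$, and injectivity of $H^d_{\m}(F^e)$ at the Witt-vector level would need a separate argument. The restriction-map route avoids this question entirely, which is one reason the paper takes it.
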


\begin{proof}
We take integers $e,n \geq 1$ and $c \in R^\circ$.
We prove the assertion by induction on $n$.
For $n=1$, we have $Q^e_{R,c,1} \simeq F^e_*R$, thus it is Cohen-Macaulay.
Next, we assume $n \geq 2$.
We have the exact sequence
\[
0 \to F^{n-1}_*B^e_{R} \xrightarrow{[c] \cdot V^{n-1}} Q^e_{R,c,n} \xrightarrow{Res} Q^e_{R,c,n-1} \to 0,
\]
where $B^e_R$ is the cokernel of the homomorphism $F^e \colon R \to F^e_*R$.
By the induction hypothesis, it is enough to show that $B^e_R$ is Cohen-Macaulay.
Since $R$ is $F$-pure and we have the exact sequence
\[
0 \to R \to F^e_*R \to B^e_R \to 0,
\]
the $R$-module $B^e_R$ is Cohen-Macaulay.
\end{proof}

\begin{proposition}\label{prop:qF^esplit-adj-inv-adj}
Let $(R,\m)$ be a regular $F$-finite local $\F_p$-algebra and $f \in R^\circ$.
For integers $n,e \geq 1$, the ring $R/f$ is $n$-quasi-$F^e$-split if and only if there exists a $W_n(R)$-module homomorphism $\varphi \colon F^e_*W_n(R) \to W_n\omega_R$ such that
\begin{enumerate}
    \item $\varphi$ induces a homomorphism $Q^e_{R,n} \to W_n\omega_R$,
    \item $\varphi(F^e_*W_n(fR)) \subseteq [f]W_n\omega_R$, and
    \item $\varphi(F^e_*1) \notin T^{n-1}(\m \cdot \omega_R)$,
\end{enumerate}
where $T^{n-1}_R \colon \omega_R \to W_n\omega_R$ is the Grothendieck trace map of $Res \colon W_n(R) \to R$.
\end{proposition}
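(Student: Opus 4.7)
The plan is to unwind the definition of $n$-quasi-$F^e$-splitting of $R/f$ via three ingredients: (i) the universal property of the pushout defining $Q^e_{R/f, n}$, (ii) Grothendieck duality for the finite surjection $W_n(R) \twoheadrightarrow W_n(R/f)$, which identifies $W_n\omega_{R/f}$ with an appropriate subquotient of $W_n\omega_R$, and (iii) Nakayama's lemma to translate the word ``generator'' into a concrete non-containment.

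First, I will observe that since $R/f$ is Gorenstein local, $\Hom_{W_n(R/f)}(R/f, W_n\omega_{R/f}) \cong \omega_{R/f}$ is a free $R/f$-module of rank one, with the isomorphism induced by the trace map $T^{n-1}_{R/f} \colon \omega_{R/f} \to W_n\omega_{R/f}$. Hence $R/f$ is $n$-quasi-$F^e$-split if and only if there exists $\psi \colon Q^e_{R/f, n} \to W_n\omega_{R/f}$ such that $\psi \circ \Phi^e_{R/f, n}(1)$ is a generator, equivalently (by Nakayama's lemma) that $\psi(\Phi^e_{R/f, n}(1)) \notin T^{n-1}_{R/f}(\m_{R/f}\omega_{R/f})$. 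By the pushout universal property, such a $\psi$ is the same data as a compatible pair $(\psi_1, \psi_2)$, where $\psi_1 \colon F^e_*W_n(R/f) \to W_n\omega_{R/f}$ and $\psi_2 \colon R/f \to W_n\omega_{R/f}$ agree after pullback along $W_n(R/f)$, and $\psi(\Phi^e_{R/f, n}(1)) = \psi_1(F^e_*1)$.

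Next, I will lift the data on $R/f$ to $R$. Using the duality between $\Hom_{W_n(R/f)}(-, W_n\omega_{R/f})$ and $\Hom_{W_n(R)}(-, W_n\omega_R)$ for $W_n(R/f)$-modules, a map $\psi_1 \colon F^e_*W_n(R/f) \to W_n\omega_{R/f}$ corresponds to a $W_n(R)$-homomorphism $\varphi \colon F^e_*W_n(R) \to W_n\omega_R$ satisfying condition (2), so that $\varphi$ descends appropriately modulo $[f]$. The compatibility of $(\psi_1, \psi_2)$ along $W_n(R/f)$ then translates into the compatibility of $\varphi$ with some $\beta \colon R \to W_n\omega_R$ along $W_n(R)$, which is precisely condition (1): $\varphi$ induces a map $Q^e_{R, n} \to W_n\omega_R$. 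Finally, via the compatibility of the trace maps $T^{n-1}_R$ and $T^{n-1}_{R/f}$ under the subquotient identification of $W_n\omega_{R/f}$ inside $W_n\omega_R$, the generator condition on $\psi_1(F^e_*1)$ becomes condition (3), namely $\varphi(F^e_*1) \notin T^{n-1}(\m\omega_R)$.

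The main technical obstacle will be setting up the duality-theoretic identification of $W_n\omega_{R/f}$ as a subquotient of $W_n\omega_R$ and verifying the compatibility of the trace maps $T^{n-1}_R$ and $T^{n-1}_{R/f}$ under this identification; these are foundational facts on dualizing modules for Witt vectors from \cite{KTTWYY3}*{Section~9}. Once these are in hand, the proposition follows from a direct diagram chase through the pushout square.
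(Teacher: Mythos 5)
Your outline follows the same broad route as the paper's own proof: Grothendieck duality to reduce to statements about $W_n\omega$-valued maps, the rank-one identification $\Hom_{W_n(R/f)}(R/f, W_n\omega_{R/f}) \cong \omega_{R/f}$ together with Nakayama to turn ``generator'' into the non-containment in condition (3), and the pushout structure of $Q^e_{R/f,n}$ to unwind the data. There is, however, a real gap in the step that produces condition (1), and it is not something that can be deferred to ``foundational facts on dualizing modules'' nor resolved by ``a direct diagram chase.''

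Concretely: starting from $\psi \colon Q^e_{R/f,n} \to W_n\omega_{R/f}$, you split it into a compatible pair $(\psi_1,\psi_2)$ and lift $\psi_1$ to some $\varphi \colon F^e_*W_n(R) \to W_n\omega_R$ satisfying (2), which is fine since $F^e_*W_n(R)$ is free over $W_n(R)$. You then assert that the compatibility of $(\psi_1,\psi_2)$ over $W_n(R/f)$ ``translates into'' condition (1), i.e.\ that $\varphi$ induces a map out of $Q^e_{R,n}$, equivalently $\varphi\bigl(F^e_*V W_{n-1}(R)\bigr)=0$. But all the compatibility of $(\psi_1,\psi_2)$ gives you is that $\psi_1$ kills $F^e_*VW_{n-1}(R/f)$, hence only that $\varphi\bigl(F^e_*VW_{n-1}(R)\bigr)\subseteq [f]W_n\omega_R$ --- strictly weaker than vanishing. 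Passing from this containment to exact vanishing requires replacing $\varphi$ by a different lift of $\psi_1$ that actually kills $F^e_*VW_{n-1}(R)$, which amounts to extending $\varphi|_{F^e_*VW_{n-1}(R)}$ (valued in $[f]W_n\omega_R\cong W_n\omega_R$) to a map on all of $F^e_*W_n(R)$; this is an $\Ext^1$ obstruction problem, not a diagram chase. That is precisely where the paper invokes \cref{CM}: the Cohen--Macaulayness of the relevant pushout module over $W_n(R)$ kills the obstruction and yields the surjectivity of the map $\alpha$ in the diagram \eqref{eq:inv}, after which the transfer to $\varphi$ is carried out through the explicit bijection $\Theta$. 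Without an argument of this type, your ``only if'' direction does not close.
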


\begin{proof}
Consider the following commutative diagram, where each horizontal sequence is exact:
\[
\begin{tikzcd}
0 \arrow[r] & Q^{f,e}_{R,n} \arrow[r] & Q^e_{R,n} \arrow[r] & Q^e_{R/f,n} \arrow[r] & 0 \\
0 \arrow[r] & fR \arrow[r] \arrow[u] & R \arrow[r] \arrow[u] & R/f \arrow[r] \arrow[u] & 0
\end{tikzcd}
\]
Taking $\Hom_{W_n(R)}(-,W_n\omega_R)$, we have the following commutative diagram;
\begin{equation}\label{eq:inv}
\begin{tikzcd}
    \Hom_{W_n(R)}(Q^{f,e}_{R,n},W_n\omega_R) \arrow[r,twoheadrightarrow,"\alpha"] \arrow[d,"ev_R"] & \Hom_{W_n(R/f)}(Q^{e}_{R/f,n},W_n\omega_R) \arrow[d,"ev_{R/f}"] \\
    \Hom_{W_n(R)}(fR,W_n\omega_R) \arrow[r,"\beta"] & \Hom_{W_n(R/f)}(R/f,W_n\omega_{R/f}),
\end{tikzcd}
\end{equation}
where the surjectivity of $\alpha$ follows from \cref{CM}. 
We note that
\[
\Hom_{W_n(R/f)}(R/f,W_n\omega_{R/f}) \simeq \omega_{R/f}
\]
by the Grothendieck duality, thus $R/f$ is $n$-quasi-$F^e$-split if and only if there exists $\varphi \colon Q^e_{R/f,n} \to W_n\omega_R$ such that $\varphi(F^e_*1) \notin T^{n-1}_{R/f}(\m\cdot \omega_{R/f})$, where $T^{n-1}_{R/f} \colon \omega_{R/f} \to W_n\omega_{R/f}$ is the Grothendieck trace map of $Res \colon W_n(R/f) \to R/f$.
Furthermore, the homomorphism $\beta$ in (\ref{eq:inv}) is the composition
\[
\Hom_{W_n(R)}(fR,W_n\omega_R) \xrightarrow{\sim} f\omega_R \xrightarrow{\sim} \omega_R \to \omega_R/f\omega_R \xrightarrow{\sim} \omega_{R/f} \xrightarrow{\sim} \Hom_{W_n(R/f)}(R/f,W_n\omega_{R/f}),
\]
then $R/f$ is $n$-quasi-$F^e$-split if and only if there exists a $W_n(R)$-module homomorphism $\varphi \colon Q^{f,e}_{R,n} \to W_n\omega_R$ such that $\varphi(F^e_*[f^{p^e}]) \notin T^{n-1}_{R}(\m \cdot \omega_R)$.
Next, if we put
\[
\Sigma := \{\varphi \in \Hom_{W_n(R)}(F^e_*W_n(R),W_n\omega_R) \mid \varphi(F^e_*W_n(fR)) \subseteq [f]W_n\omega_R\}.
\]
Then there is a bijection
\[
\Theta \colon \Sigma \xrightarrow{\sim} \Hom_{W_n(R)}(F^e_*W_n(fR),W_n\omega_R)
\]
such that $\Theta(\varphi)=[f^{-1}]\cdot \varphi$ and $\Theta^{-1}(\psi)=[f]\cdot \psi$.   
Furthermore, a homomorphism $\varphi$ induces the homomorphism form $Q^e_{R,n}$ if and only if $\Theta(\varphi)$ induces the homomorphism from $Q^{f,e}_{R,n}$.
Therefore, the ring $R/f$ is $n$-quasi-$F^e$-split if and only if there exists a $W_n(R)$-module homomorphism $\varphi \colon F^e_*W_n(R) \to W_n\omega_R$ such that $\varphi$ satisfies conditions (1), (2) and
\[
\varphi(F^e_*1)=\Theta(\varphi)(F^e_*[f^{p^e}]) \notin T^{n-1}_R(\m \cdot \omega_R),
\]
as desired.
\end{proof}

\begin{lemma}\label{qFr-test-elem}
Let $(R,\m)$ be a normal Gorenstein $F$-finite local $\F_p$-algebra.
Let $t \in \tau(R)$ and $c \in (t^4) \cap R^\circ$, where $\tau(R)$ is the test ideal of $R$.
If there exists an integer $e \geq 1$ such that the homomorphism
\[
(\Phi^{e}_{R,c,n})^* \colon \Hom_{W_n(R)}(Q^{e}_{R,c,n},W_n\omega_R) \to \Hom_{W_n(R)}(R,W_n\omega_R)
\]
is surjective, then $R$ is $n$-quasi-$F^e$-split.
\end{lemma}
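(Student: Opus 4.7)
The plan is to reduce the statement to a formal fact about the pushouts $Q^e_{R,n}$ and $Q^e_{R,c,n}$: I will construct a canonical $W_n(R)$-linear map $\pi_c\colon Q^e_{R,n}\to Q^e_{R,c,n}$ that factors $\Phi^e_{R,c,n}$ through $\Phi^e_{R,n}$, and then transfer the surjectivity hypothesis by applying $\Hom_{W_n(R)}(-,W_n\omega_R)$. The hypotheses $t\in\tau(R)$ and $c\in(t^4)\cap R^\circ$ play no role in this particular argument; they are recorded because this lemma is intended to be applied in the Fedder-type criterion for quasi-$F$-regularity, where the specific form of $c$ matters.

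The first step is the construction of $\pi_c$. Using the universal property of the pushout defining $Q^e_{R,n}$, it suffices to specify a map $R\to Q^e_{R,c,n}$ and a map $F^e_*W_n(R)\to Q^e_{R,c,n}$ that agree after precomposition by the restriction $W_n(R)\to R$ and by the Frobenius $W_n(R)\to F^e_*W_n(R)$, respectively. On the $R$-side I take $\Phi^e_{R,c,n}$ itself; on the $F^e_*W_n(R)$-side I take the composition of multiplication by $F^e_*[c]$ with the canonical map $F^e_*W_n(R)\to Q^e_{R,c,n}$. The required equality on $W_n(R)$ is exactly the pushout relation built into $Q^e_{R,c,n}$, so $\pi_c$ is well defined and, by construction, $\pi_c\circ\Phi^e_{R,n}=\Phi^e_{R,c,n}$.

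The second step is formal: applying the contravariant functor $\Hom_{W_n(R)}(-,W_n\omega_R)$ to this factorization gives
\[
(\Phi^e_{R,c,n})^*=(\Phi^e_{R,n})^*\circ\pi_c^*.
\]
Since the left-hand side is surjective by hypothesis, the outer factor $(\Phi^e_{R,n})^*$ must be surjective, which is precisely the definition of $n$-quasi-$F^e$-splitness of $R$.

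I do not anticipate a genuine obstacle here: the only nontrivial content is the functoriality of the pushout in the twist parameter $c$, and once $\pi_c$ is available the conclusion is immediate. If the intended conclusion were instead that $R$ is $n$-quasi-$F$-regular, then the test-element hypotheses would be essential and one would need a further argument exhibiting, for every $c'\in R^\circ$, a compatible splitting at sufficiently large $e'$ by a standard propagation from $c\in(t^4)$; but for the statement as written, the pushout-comparison step above already suffices.
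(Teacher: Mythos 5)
Your argument is correct, and it is a genuinely different (and more self-contained) route than the paper's: the paper proves this lemma by citing \cite{KTTWYY3}*{Theorem~4.31}, whereas you give an elementary direct construction.

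Your construction of the comparison map $\pi_c\colon Q^e_{R,n}\to Q^e_{R,c,n}$ via the universal property of the pushout is sound: on $R$ take $\Phi^e_{R,c,n}$, on $F^e_*W_n(R)$ take $\iota_c\circ(\cdot F^e_*[c])$, and the compatibility on $W_n(R)$ is exactly the defining relation of the pushout $Q^e_{R,c,n}$. The factorization $\Phi^e_{R,c,n}=\pi_c\circ\Phi^e_{R,n}$ then gives $(\Phi^e_{R,c,n})^*=(\Phi^e_{R,n})^*\circ\pi_c^*$, and surjectivity of a composite forces surjectivity of the outer map, which is precisely $n$-quasi-$F^e$-splitness. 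This is the Witt-vector analogue of the elementary classical fact that if $\varphi\circ(\cdot F^e_*c)$ splits $F^e$ then so does some $\varphi'$ (namely $\varphi\circ(\cdot F^e_*c)$ itself).

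You are also correct that the hypotheses $t\in\tau(R)$ and $c\in(t^4)\cap R^\circ$ are not used for this particular implication; they are carried along because the lemma is invoked inside \cref{qFr-inv-adj}, where those hypotheses do carry real weight (for propagating a single twisted splitting to all elements of $R^\circ$ at large $e'$, which is the content the paper imports from \cite{KTTWYY3}*{Theorem~4.31}). What your argument buys is transparency: the stated lemma follows from pure pushout functoriality, with no input from test-ideal theory. What the paper's citation buys is that the referenced theorem is presumably the stronger result actually needed in the quasi-$F$-regularity direction, and citing it once covers both uses. Either way, as a proof of the statement as written, yours is complete and correct.
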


\begin{proof}
The assertion follows from \cite{KTTWYY3}*{Theorem~4.31}.
\qedhere

\end{proof}

\begin{proposition}\label{qFr-inv-adj}
Let $(R,\m)$ be a regular $F$-finite local $\F_p$-algebra and $f \in R^\circ$.
For an integer $n$ with $n \geq 1$, $t \in \tau(R/f)$ and $c \in (t^4) \cap (R/f)^\circ$, $R/f$ is $n$-quasi-$F$-regular if and only if there exist an integer $e \geq 1$, $\wt{c} \in R$ is a lift of $c$, and a $W_n(R)$-module homomorphism $\varphi \colon F^e_*W_n(R) \to W_n\omega_R$ such that
\begin{enumerate}
    \item $\varphi$ induces a homomorphism $Q^{e}_{R,c,n} \to W_n\omega_R$,
    \item $\varphi(F^e_*W_n(fR)) \subseteq ([f])W_n\omega_R$, and
    \item $\varphi(F^e_*[\wt{c}]) \notin T^{n-1}_R(\m \cdot \omega_R)$,
\end{enumerate}
where $T^{n-1}_R \colon \omega_R \to W_n\omega_R$ is the Grothendieck trace map of $Res \colon W_n(R) \to R$.
\end{proposition}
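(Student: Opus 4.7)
The argument will closely parallel that of Proposition \ref{prop:qF^esplit-adj-inv-adj}, with $Q^e_{R,n}$ replaced throughout by the $c$-twisted pushout $Q^e_{R,\wt{c},n}$, and with Lemma \ref{qFr-test-elem} used to bridge between $n$-quasi-$F$-regularity (a condition for every $c' \in (R/f)^\circ$) and the single-$c$, single-$e$ surjectivity of $(\Phi^e_{R/f,c,n})^*$.

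My first step is to invoke Lemma \ref{qFr-test-elem} applied to $R/f$: since $t \in \tau(R/f)$ and $c \in (t^4) \cap (R/f)^\circ$, the ring $R/f$ is $n$-quasi-$F$-regular if and only if there exists an integer $e \geq 1$ such that $(\Phi^e_{R/f,c,n})^*$ is surjective. It therefore suffices to characterize the existence of such an $e$ in terms of the triple $(e, \wt{c}, \varphi)$ in the statement. Fix a lift $\wt{c} \in R$ of $c$ and form the $c$-twisted analogues of the diagram in the proof of Proposition \ref{prop:qF^esplit-adj-inv-adj}: the short exact sequence
\[
0 \to Q^{f,e}_{R,\wt{c},n} \to Q^e_{R,\wt{c},n} \to Q^e_{R/f,c,n} \to 0,
\]
obtained by functoriality of the pushout, induces, upon applying $\Hom_{W_n(R)}(-, W_n\omega_R)$, an analogue of diagram (\ref{eq:inv}) whose top row is surjective by Cohen--Macaulayness of $Q^e_{R,\wt{c},n}$ (Proposition \ref{CM}, applicable since $R$ is regular and hence $F$-pure).

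Tracing through the same chain of Grothendieck-duality identifications as in Proposition \ref{prop:qF^esplit-adj-inv-adj}, the surjectivity of $(\Phi^e_{R/f,c,n})^*$ is equivalent to the existence of a map $\psi \colon Q^{f,e}_{R,\wt{c},n} \to W_n\omega_R$ whose value on the distinguished element --- the image of $f\wt{c} \in fR$ under the twisted pushout, which identifies with $[f] \cdot F^e_*[\wt{c}] \in F^e_*W_n(fR)$ --- lies outside $T^{n-1}_R(\m \cdot \omega_R)$. Finally, the bijection $\Theta(\varphi) = [f^{-1}]\varphi$ of Proposition \ref{prop:qF^esplit-adj-inv-adj} converts $\psi$ into the desired $\varphi \in \Sigma$: condition (1) records factorization through $Q^e_{R,\wt{c},n}$; condition (2) is the defining property of $\Sigma$; and condition (3), $\varphi(F^e_*[\wt{c}]) \notin T^{n-1}_R(\m \cdot \omega_R)$, follows from the identity $\varphi(F^e_*[\wt{c}]) = [f^{-1}] \cdot \psi([f] \cdot F^e_*[\wt{c}])$ combined with the non-vanishing condition on $\psi$.

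The main technical obstacle is the careful bookkeeping of the distinguished element --- $F^e_*[\wt{c}]$ in place of $F^e_*1$ --- under the $[f^{-1}]$-twist and the pushout-to-Hom identifications, together with verifying that the choice of lift $\wt{c}$ of $c$ does not affect condition (3) once condition (2) is enforced, so that the resulting statement is intrinsic to $c \in R/f$. Once these identifications are in place, the equivalence follows formally from the quasi-$F^e$-split version (Proposition \ref{prop:qF^esplit-adj-inv-adj}) combined with the test-element reduction of Lemma \ref{qFr-test-elem}.
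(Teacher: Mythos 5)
Your argument matches the paper's proof, which is itself only a one-line pointer to the analogy with \cref{prop:qF^esplit-adj-inv-adj} together with the test-element reduction \cref{qFr-test-elem}; you have simply spelled out the steps that analogy involves, and the structure (CM-surjectivity of the top row, the $\Theta = [f^{-1}]\cdot(-)$ twist, tracking the distinguished element $F^e_*[\wt{c}]$) is exactly right. One small bookkeeping slip: since $\psi = \Theta(\varphi) = [f^{-1}]\varphi$, the correct identity is $\varphi(F^e_*[\wt{c}]) = \psi\bigl([f]\cdot F^e_*[\wt{c}]\bigr)$, with no extra $[f^{-1}]$ on the right-hand side (and the element of $fR$ that glues to $[f]\cdot F^e_*[\wt{c}] = F^e_*[\wt{c}f^{p^e}]$ in the twisted pushout is $f$, not $f\wt{c}$); the conclusion you draw is nonetheless correct.
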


\begin{proof}
The assertion follows from an argument similar to an argument for the proof of \cref{prop:qF^esplit-adj-inv-adj} and \cref{qFr-test-elem}.
\end{proof}

\section{Delta map and trace map of the ghost component}
\begin{notation}\label{notation:F-lift}
Let $(A,\m)$ be a regular local ring such that $A$ is $p$-torsion free.
Let $\phi \colon A \to A$ be a lift of Frobenius such that $\phi$ is finite.
We define the ring homomorphism
\[
\varphi:=\prod_{r \geq 0} \varphi_r \colon W(A) \to \prod_{r \geq 0} A
\]
by
\[
\varphi_{n-1}((a_i)_i)=a_0^{p^{n-1}}+pa_1^{p^{n-2}}+\cdots+p^{n-1}a_{n-1}=\sum_{n-1 \geq s \geq 0}p^sa_s^{p^{n-1-s}}.
\]
We note that $\varphi$ is a ring homomorphism.
Furthermore, the ring homomorphism $\varphi$ induces the ring homomorphism
\[
W_n(A) \to \prod_{n-1 \geq r \geq 0} A
\]
is also denoted by $\varphi$.
\end{notation}

\begin{lemma}\label{lem:topology on Witt ring}
We use \cref{notation:F-lift}.
Then we have $\varphi(W_n(p^mA))=\bigoplus_{i=0}^{n-1} p^{m+i}A$ for all integers $n,m \geq 1$.
\end{lemma}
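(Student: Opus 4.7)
The plan is to establish both inclusions directly from the explicit formula $\varphi_r((a_i)_i) = \sum_{s=0}^{r} p^s a_s^{p^{r-s}}$ for the ghost map, using a single elementary divisibility estimate. Specifically, for integers $0 \leq s \leq r$ and $m \geq 1$, setting $t := r - s$, one has
\[
s + m p^{r-s} - (m+r) = m(p^t - 1) - t \geq 0,
\]
because $p^t \geq 1 + t$ for $p \geq 2$ and $t \geq 0$. This single inequality drives both directions of the desired equality.

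For the containment $\varphi(W_n(p^m A)) \subseteq \bigoplus_{i=0}^{n-1} p^{m+i} A$, I would take $(a_0, \ldots, a_{n-1})$ with each $a_i \in p^m A$ and observe that every summand $p^s a_s^{p^{r-s}}$ of $\varphi_r$ lies in $p^{s + m p^{r-s}} A \subseteq p^{m+r} A$ by the estimate above; summing gives $\varphi_r((a_i)_i) \in p^{m+r} A$, which is the desired conclusion coordinate by coordinate.

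For the reverse containment, given $(c_0, \ldots, c_{n-1})$ with $c_r \in p^{m+r} A$, I would construct Witt coordinates $a_0, \ldots, a_{n-1} \in p^m A$ satisfying $\varphi_r((a_i)_i) = c_r$ by induction on $r$. Start with $a_0 := c_0 \in p^m A$, which handles $r = 0$. For the inductive step, assume $a_0, \ldots, a_{r-1} \in p^m A$ are already chosen so that $\varphi_i = c_i$ for $i < r$; the equation $\varphi_r = c_r$ then rearranges to
\[
p^r a_r = c_r - \sum_{s=0}^{r-1} p^s a_s^{p^{r-s}}.
\]
The right-hand side lies in $p^{m+r} A$ by the same inequality applied to each summand, and since $A$ is $p$-torsion free this determines $a_r$ uniquely; the divisibility by $p^{m+r}$ shows $a_r \in p^m A$, completing the induction.

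The argument is essentially routine; the only point of substance is the inequality $m(p^t - 1) \geq t$, and the only care required is to confirm that the inductively constructed $a_r$ lies in $p^m A$ rather than merely in $A$, which is ensured by $p$-torsion freeness together with the divisibility of the right-hand side by $p^{m+r}$.
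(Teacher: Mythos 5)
Your proof is correct and follows essentially the same plan as the paper's: verify the forward inclusion directly from the ghost formula, then handle the reverse inclusion by induction on the index, using in each step that the lower-indexed terms are already divisible by $p^{m+r}$. The divisibility estimate you isolate, $s + m p^{r-s} \geq m + r$ (equivalently $m(p^{r-s}-1)\geq r-s$), is exactly the inequality underlying the paper's inductive step; you merely state it explicitly where the paper leaves it implicit. One small point in your favour: you frame the reverse direction constructively, producing for each $(c_0,\ldots,c_{n-1}) \in \bigoplus_i p^{m+i}A$ the unique preimage $\alpha \in W_n(p^mA)$ (uniqueness and existence of $a_r$ following from $p$-torsion-freeness). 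The paper instead takes an $\alpha \in W_n(A)$ with $\varphi(\alpha) \in \bigoplus_i p^{m+i}A$ and shows $\alpha \in W_n(p^mA)$; strictly speaking this gives $\varphi^{-1}\bigl(\bigoplus_i p^{m+i}A\bigr)\subseteq W_n(p^mA)$, which yields the claimed surjectivity onto $\bigoplus_i p^{m+i}A$ only after one also notes that every such tuple has a ghost preimage in $W_n(A)$ — precisely what your construction supplies. So your write-up is the cleaner way to deliver the $\supseteq$ containment the lemma actually asserts, but the underlying computation is the same.
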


\begin{proof}
We take integers $n,m \geq 1$.
The image of $(p^ma_0,\cdots,p^ma_{n-1})$ by $\varphi$ is
\[
(p^ma_0,(p^ma_0)^p+p^{m+1}a_1,(p^ma_0)^{p^2}+p(p^ma_1)^p+p^{m+2}a_2,\cdots ),
\]
thus $\varphi(W_n(p^mA))$ is contained in $\bigoplus p^{m+i}A$.
On the other hand, we assume that the image of $\alpha:=(a_0,a_1,\ldots)$ by $\varphi$ is contained in $\bigoplus p^{m+i}A$.
We prove $a_i \in p^mA$ by the induction on $i$.
For $i=0$, since the first component of $\varphi(\alpha)$ is $a_0$, we have $a_0 \in p^mA$.
Suppose $i \geq 1$.
The $i$-the component of $\varphi(\alpha)$  
\[
a_0^{p^i}+pa_1^{p^{i-1}}+\cdots+p^{i}a_i,
\]
is contained in $p^{m+i-1}A$.
By the induction hypothesis, we have $a_i \in p^m$.
\end{proof}

\begin{lemma}\label{lem: V and w}
We use \cref{notation:F-lift}.
Then we have
\[
\varphi_{r}(V\alpha)=p\varphi_{r-1}(\alpha)
\]
for every integer $r \geq 1$ and $\alpha \in W(A)$.
\end{lemma}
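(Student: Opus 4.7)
The plan is to prove the identity by direct computation from the definitions in \cref{notation:F-lift}. Write $\alpha = (a_0, a_1, a_2, \ldots) \in W(A)$. By the definition of the Verschiebung, $V\alpha$ has components $(V\alpha)_0 = 0$ and $(V\alpha)_i = a_{i-1}$ for $i \geq 1$. Substituting into the defining formula
\[
\varphi_r\bigl((c_i)_i\bigr) = \sum_{s=0}^{r} p^s c_s^{p^{r-s}},
\]
the $s=0$ term vanishes because $0^{p^r} = 0$, so that
\[
\varphi_r(V\alpha) = \sum_{s=1}^{r} p^s a_{s-1}^{p^{r-s}}.
\]

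Reindexing this sum with $t = s-1$ yields
\[
\varphi_r(V\alpha) = \sum_{t=0}^{r-1} p^{t+1} a_t^{p^{(r-1)-t}} = p \sum_{t=0}^{r-1} p^t a_t^{p^{(r-1)-t}} = p\, \varphi_{r-1}(\alpha),
\]
which is the desired identity. The argument transfers verbatim to truncated Witt vectors $W_n(A)$ provided $r \leq n-1$, so the lemma holds as stated on $W(A)$. There is no genuine obstacle: this is the classical compatibility of ghost components with Verschiebung, and the entire proof is a one-line index manipulation. The only care needed is to line up the exponents $p^{r-s}$ after the shift of indices, which is routine.
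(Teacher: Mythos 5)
Your proof is correct and follows essentially the same direct computation as the paper: expand the ghost component of $V\alpha$ using $(V\alpha)_0 = 0$, $(V\alpha)_i = a_{i-1}$, and reindex to recognize $p\,\varphi_{r-1}(\alpha)$. Your write-up is a bit more explicit about the reindexing step, but the argument is identical.
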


\begin{proof}
Let $\alpha=(a_0,a_1,\ldots)$.
Then we have
\begin{align*}
    \varphi_{r}(V\alpha)
    =pa_0^{p^{r-1}}+\cdots +p^{r}a_r =p\varphi_{r-1}(\alpha),
\end{align*}
as desired.
\end{proof}

\begin{notation}\label{notation:dualizing}
We use \cref{notation:F-lift}.
Let $\omega_A$ be a dualizing complex on $A$.
Since $\phi$ is a lift of Frobenius, we have the ring homomorphism $s_\phi \colon A \to W_n(A)$ as in \cite{Yoshikawa25}*{Proposition~3.2}.
We regard $W_n(A)$ as an $A$-module via $s_\phi$.
We define dualizing modules on $W_n(A)$ by 
\[
W_n\omega_A :=\Hom_{A}(W_n(A),\omega_A),
\]
and on $A/p^n$ by
\[
\omega_{A/p^n}:=\mathrm{Ext}^1_{A}(A/p^n,\omega_A).
\]
Furthermore, we fix an isomorphism $\iota \colon A \xrightarrow{\sim} \omega_A$.
The isomorphism induces the isomorphisms $\iota_n \colon A/p^n \xrightarrow{\sim} \omega_{A/p^n}$ for every integer $n \geq 1$.
Furthermore, the Grothendieck trace map $W_n\omega_A \to W_{n+1}\omega_A$ is denoted by $T$ for every $n \geq 1$.
Moreover, the $A$-module homomorphism
\[
\phi_*A/p^n \xrightarrow{\phi_*\iota_n} \phi_*\omega_{A/p^n} \xrightarrow{\phi^*} \omega_{A/p^n} \xrightarrow{\iota_n^{-1}} A/p^n
\]
is denoted by $u$ for every integer $n \geq 1$.
\end{notation}

\begin{proposition}\label{prop:well-def w_n}
We use \cref{notation:dualizing}.
The map $\varphi_{n-1} \colon W_{n}(A) \to A$ induces a ring homomorphism
\[
\varphi_{n-1} \colon W_n(A/p) \to A/p^{n}.
\]
Furthermore, the map is an $A$-module homomorphism if we regard $\varphi_{n-1}$ as 
\[
\varphi_{n-1} \colon W_n(A/p) \to \phi^{n-1}_*A/p^{n}.
\]
\end{proposition}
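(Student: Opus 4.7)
The plan is to verify the two assertions separately. The starting point is that $\varphi_{n-1}\colon W_n(A)\to A$ is already a ring homomorphism by \cref{notation:F-lift}, so it suffices to check that it descends mod $(p$ on input, $p^n$ on output$)$, and then to compute the action of scalars through the section $s_\phi$.

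\textbf{Descent step.} I would first isolate the standard congruence lemma that in any $p$-torsion-free commutative ring, $x \equiv y \pmod{p}$ implies $x^{p^k} \equiv y^{p^k} \pmod{p^{k+1}}$ for every $k \ge 0$. This follows by induction on $k$: writing $x^{p^{k-1}} = y^{p^{k-1}} + p^k z$ and expanding $(y^{p^{k-1}} + p^k z)^p$, one checks that every cross term has $p$-adic valuation at least $k+1$ (using $v_p(\binom{p}{i} p^i) \ge i+1$ for $1 \le i \le p$ and iterating). Given two lifts $(a_i), (b_i) \in W_n(A)$ of the same class in $W_n(A/p)$, one has $a_s \equiv b_s \pmod{p}$ for each $s$, hence $a_s^{p^{n-1-s}} \equiv b_s^{p^{n-1-s}} \pmod{p^{n-s}}$, and so
\[
p^s a_s^{p^{n-1-s}} \equiv p^s b_s^{p^{n-1-s}} \pmod{p^n} \qquad (0 \le s \le n-1).
\]
Summing over $s$ gives $\varphi_{n-1}((a_i)) \equiv \varphi_{n-1}((b_i)) \pmod{p^n}$, which is exactly the well-definedness of the descended map. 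Since both $W_n(A) \twoheadrightarrow W_n(A/p)$ and $A \twoheadrightarrow A/p^n$ are surjective ring homomorphisms, the descended map $W_n(A/p) \to A/p^n$ is automatically a ring homomorphism.

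\textbf{$A$-linearity step.} Here I would invoke the defining property of $s_\phi$ recorded in \cite{Yoshikawa25}*{Proposition~3.2}: the Teichmüller-type section associated with the Frobenius lift $\phi$ satisfies $\varphi_i(s_\phi(a)) = \phi^i(a)$ for all $a \in A$ and $0 \le i \le n-1$; in particular $\varphi_{n-1}(s_\phi(a)) = \phi^{n-1}(a)$. Since the descended $\varphi_{n-1}$ is a ring homomorphism, for any $a \in A$ and $\alpha \in W_n(A/p)$ we obtain
\[
\varphi_{n-1}(s_\phi(a) \cdot \alpha) \;=\; \varphi_{n-1}(s_\phi(a)) \cdot \varphi_{n-1}(\alpha) \;=\; \phi^{n-1}(a) \cdot \varphi_{n-1}(\alpha),
\]
which is precisely the equivariance statement that $\varphi_{n-1}$ is an $A$-module homomorphism to the twist $\phi^{n-1}_* A/p^n$.

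The main obstacle is genuinely only the $p$-adic congruence computation in the descent step; it is routine but must be stated carefully since the factor $p^s$ in front of $a_s^{p^{n-1-s}}$ exactly compensates the loss of one power of $p$ per raising to a $p$-th power, making $p^n$ the sharp modulus. Everything else is a formal consequence of $\varphi_{n-1}$ being a ring homomorphism on $W_n(A)$ together with the identity $\varphi_{n-1} \circ s_\phi = \phi^{n-1}$.
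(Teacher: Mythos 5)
Your proof is correct and follows the same two-step plan as the paper: show that $\varphi_{n-1}$ descends modulo $p^n$, then deduce the twisted $A$-linearity from $\varphi_{n-1}$ being a ring homomorphism together with the ghost-component identity $\varphi_{n-1}\circ s_\phi = \phi^{n-1}$. The only real difference is that the $p$-adic congruence you establish by hand in the descent step (equivalently, that $\varphi_{n-1}(W_n(pA))\subseteq p^nA$) is precisely the content of \cref{lem:topology on Witt ring} with $m=1$, which the paper simply cites rather than reproves.
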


\begin{proof}
Since we have
\[
\varphi_{n-1}(pa_0,pa_1,\ldots,pa_{n-1}) \in (p^{n})
\]
by \cref{lem:topology on Witt ring}, the homomorphism $\varphi_{n-1}$ induces the map
\[
\varphi_{n-1} \colon W_n(A/p) \to A/p^{n}. 
\]
The second assertion follows from the fact that the homomorphism
\[
\varphi_{n-1} \colon W_n(A) \to \phi^{n-1}_*A
\]
is an $A$-module homomorphism.
\end{proof}

\begin{proposition}\label{prop:w dual}
We use \cref{notation:dualizing}.
The Grothendieck trace map induced by $\varphi_{n-1}$ is denoted by
\[
\varphi_{n-1}^* \colon \phi^{n-1}_*\omega_{A/p^n} \to W_n\omega_{A/p}.
\]
Then we have the following properties;
\begin{enumerate}
    \item $V^r[b]\cdot \varphi_{n-1}^*(\phi_*^{n-1}\omega)=p^r\varphi_{n-1}^*\left(\phi_*^{n-1}(b^{p^{n-1-r}}\cdot \omega)\right)$ for every $\omega \in \omega_{A/p^n}$, $b \in A$ and integer $r \geq 0$.
    \item We have the following commutative diagram;
    \[
    \begin{tikzcd}
    \phi^{n-1}_*\omega_{A/p^{n-1}} \arrow[r, "\phi^*"] \arrow[d, "\pi^*"] &
    \phi^{n-2}_*\omega_{A/p^{n-1}} \arrow[r, "\varphi_{n-2}^*"] &
    W_{n-1}\omega_{A/p} \arrow[d, "T"] \\
    \phi^{n-1}_*\omega_{A/p^n} \arrow[rr, "\varphi_{n-1}^*"] &&
    W_n\omega_{A/p}
    \end{tikzcd}
    \]
    for integer $n \geq 2$,
    where $\pi^*$ is the dual of the natural surjection $\pi \colon A/p^n \to A/p^{n-1}$.
    \item We have the commutative diagram in which each horizontal sequence is exact;
    \[
    \begin{tikzcd}
    0 \arrow[r] & \phi^{n-1}_*\omega_{A/p} \arrow[r, "\pi^*"] \arrow[d, "(\phi^{n-1})^*"] & \phi^{n-1}_*\omega_{A/p^{n}} \arrow[r, "\cdot p"] \arrow[d, "\varphi_{n-1}^*"] & \phi^{n-1}_*\omega_{A/p^{n-1}} \arrow[d, "\varphi_{n-2}^*"] \arrow[r] & 0 \\
    0 \arrow[r] & \omega_{A/p} \arrow[r, "{T^{n-1}}"] & W_{n}\omega_{A/p} \arrow[r, "V^*"] & \phi^m_*W_{n-1}\omega_{A/p} \arrow[r] & 0
    \end{tikzcd}
    \]
\end{enumerate}
\end{proposition}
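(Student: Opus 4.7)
The plan is to reduce each of the three assertions to a commutativity statement between underlying ring/module maps (not yet involving dualizing modules) and then apply Grothendieck duality, using that $\varphi_{n-1}^*$ is induced from precomposition with $\varphi_{n-1}$ via the trace functor. The backbone is that $\varphi_{n-1}$ is a ring homomorphism which is $A$-linear after the twist $\phi^{n-1}_*$, together with \cref{lem: V and w}.

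For \textbf{(1)}, I first compute $\varphi_{n-1}(V^r[b])$. Iterating \cref{lem: V and w} $r$ times, starting from $\varphi_{n-1-r}([b]) = b^{p^{n-1-r}}$, I obtain $\varphi_{n-1}(V^r[b]) = p^r b^{p^{n-1-r}}$. Multiplicativity of $\varphi_{n-1}$ then yields $\varphi_{n-1}(V^r[b]\cdot\alpha) = p^r b^{p^{n-1-r}}\varphi_{n-1}(\alpha)$ for all $\alpha\in W_n(A)$. Since the action of $\alpha$ on $\varphi_{n-1}^*(\phi^{n-1}_*\omega)$ is by $\beta\mapsto \omega(\varphi_{n-1}(\alpha\beta))$, substituting this with $\alpha = V^r[b]$ produces the claimed identity.

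For \textbf{(2)}, I will prove the ring-level identity $\pi\circ\varphi_{n-1} = \phi\circ\varphi_{n-2}\circ Res$ as maps $W_n(A/p)\to A/p^{n-1}$, and then dualize. Expanding both compositions on $(a_0,\ldots,a_{n-1})$ with lifts $\tilde a_i\in A$, the left side is $\sum_{s=0}^{n-2}p^s\tilde a_s^{p^{n-1-s}} \pmod{p^{n-1}}$, while the right side is $\sum_{s=0}^{n-2}p^s\phi(\tilde a_s)^{p^{n-2-s}} \pmod{p^{n-1}}$. The standard fact that $x\equiv y\pmod{p^k}$ implies $x^p\equiv y^p\pmod{p^{k+1}}$, applied inductively to $\phi(\tilde a_s)\equiv\tilde a_s^p\pmod p$, gives $\phi(\tilde a_s)^{p^{n-2-s}}\equiv\tilde a_s^{p^{n-1-s}}\pmod{p^{n-1-s}}$, so the two sums agree modulo $p^{n-1}$. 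Applying $\Hom_A(-,\omega_A)$, and using that the Grothendieck trace of $Res$ is $T$, produces the stated diagram.

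For \textbf{(3)}, I identify the top row as the dualization of the short exact sequence $0\to A/p^{n-1}\xrightarrow{\cdot p}A/p^n\to A/p\to 0$ (with $\phi^{n-1}_*$ applied throughout): since $A/p^k$ has $\Ext^{\neq 1}$ vanishing against $\omega_A$, this yields $0\to\omega_{A/p}\to\omega_{A/p^n}\to\omega_{A/p^{n-1}}\to 0$ with middle arrow being multiplication by $p$. The bottom row is the dualization of $0\to A/p\xrightarrow{V^{n-1}}W_n(A/p)\xrightarrow{Res}W_{n-1}(A/p)\to 0$. Commutativity of the left square reduces to $\pi\circ\varphi_{n-1}=\phi^{n-1}\circ Res^{n-1}$ as maps $W_n(A/p)\to A/p$, which is immediate from $\varphi_{n-1}(a_0,\ldots,a_{n-1})\equiv a_0^{p^{n-1}}\pmod p$. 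Commutativity of the right square reduces to $\varphi_{n-1}\circ V = p\cdot\varphi_{n-2}$, which is exactly \cref{lem: V and w} with $r=n-1$. The main obstacle I anticipate is bookkeeping: each diagram carries implicit Frobenius pushforwards $\phi^k_*$ whose exact placement must match between the top and bottom rows for the vertical arrows to make sense, and in particular the right vertical in \textbf{(3)} should be read as $\phi_*\varphi_{n-2}^*$ with $m=1$. Keeping source and target twists consistent while applying Grothendieck duality is where the argument is most delicate, but once the ring-level squares are set up the passage to $\omega$-modules is formal.
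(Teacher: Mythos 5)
Your proposal is correct and takes essentially the same route as the paper: all three parts reduce to commutativity of the corresponding ring-level diagrams and then pass to dualizing modules via Grothendieck duality, with \cref{lem: V and w} handling the $V$-interactions and the congruence $\phi(\tilde a)^{p^{k}}\equiv \tilde a^{p^{k+1}}\pmod{p^{k+1}}$ giving the mod-$p^{n-1}$ identity in (2). The only point I'd tighten is the phrasing in (1), where you describe the $W_n(A/p)$-action on $\varphi_{n-1}^*(\phi^{n-1}_*\omega)$ as "$\beta\mapsto\omega(\varphi_{n-1}(\alpha\beta))$": elements of $\omega_{A/p^n}$ are not literally homomorphisms, so the clean formulation (and the one the paper uses) is that $\varphi_{n-1}^*$ is $W_n(A/p)$-linear once the source is given its $W_n(A/p)$-structure through $\varphi_{n-1}$, so $\alpha\cdot\varphi_{n-1}^*(\phi_*^{n-1}\omega)=\varphi_{n-1}^*\big(\phi_*^{n-1}(\varphi_{n-1}(\alpha)\cdot\omega)\big)$; with $\alpha=V^r[b]$ and $\varphi_{n-1}(V^r[b])=p^rb^{p^{n-1-r}}$ this yields (1). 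Your reading of the mislabeled $\phi^m_*$ as $\phi_*$ (i.e.\ $m=1$) in the lower-right corner of (3) is the correct interpretation.
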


\begin{proof}
Since $\varphi_{n-1} \colon W_n(A/p) \to {\varphi_{n-1}}_*A/p^n$ is a $W_n(A)$-module homomorphism, the $W_n\omega_A$-dual of it is also a $W_n(A)$-module homomorphism.
Therefore, we have
\begin{align*}
V^r[b]\cdot \varphi_{n-1}^*(\phi_*^{n-1}\omega)
&=\varphi_{n-1}^*\left(\phi_*^{n-1}(\varphi_{n-1}(V^r[b])\cdot \omega)\right) 
=\varphi_{n-1}^*\left(\phi_*^{n-1}(p^{r}b^{p^{n-1-r}}\cdot \omega)\right) \\
&=p^r\varphi_{n-1}^*\left(\phi_*^{n-1}(b^{p^{n-1-r}}\cdot \omega)\right),
\end{align*}
thus we obtain assertion (1).
Assertion (2) follows from the following commutative diagram
\[
\begin{tikzcd}
W_n(A/p) \arrow[rr, "\varphi_{n-1}"] \arrow[d, "Res"] & &
\phi^{n-1}_*A/p^n \arrow[d,"\pi"] \\
W_{n-1}(A/p) \arrow[r, "\varphi_{n-2}"] &
\phi^{n-2}_*A/p^{n-1} \arrow[r, "\phi"] &
\phi^{n-1}_*A/p^{n-1}.
\end{tikzcd}
\]
Indeed, for $\alpha=(a_0,\ldots a_{n-1}) \in W_{n}(A/p)$, if we take lifts $\widetilde{a}_i \in A$ of $a_i$, then we have
\begin{align*}
    \phi \circ \varphi_{n-2} \circ Res(\alpha)
    &=\phi\left(\phi_*^{}(\widetilde{a}_0^{p^{n-2}}+\cdots +p^{n-2}\widetilde{a}_{n-2})\right) \\
    &\equiv \widetilde{a}_0^{p^{n-1}}+\cdots +p^{n-2}\widetilde{a}_{n-2}^p \mod p^{n-1} \\
    &\equiv \varphi_{n-1}(\phi_*^{n-1}\alpha) \mod p^{n-1}.
\end{align*}

Assertion (3) follows form the following commutative diagram in which each horizontal sequence is exact;
\[
\begin{tikzcd}
0 \arrow[r] &
\phi_*W_{n-1}(A/p) \arrow[r, "V"] \arrow[d, "\varphi_{n-2}"] &
W_n(A/p) \arrow[r, "(Res)^{n-1}"] \arrow[d, "\varphi_{n-1}"] &
A/p \arrow[r] \arrow[d, "\phi^{n-1}"] &
0 \\
0 \arrow[r] &
\phi^{n-1}_*A/p^{n-1} \arrow[r, "\cdot p"] &
\phi^{n-1}_*A/p^n \arrow[r] &
\phi^{n-1}_*A/p. \arrow[r] &
0
\end{tikzcd}
\]
Indeed, the commutativity follows from Lemma \ref{lem: V and w} and
\[
\phi^{n-1} \circ (Res)^{n-1}(\alpha)\equiv a_0^{p^{n-1}}=\varphi_{n-1}(\phi_*^{n-1}\alpha) \mod p
\]
for $\alpha=(a_0,\ldots,a_{n-1}) \in W_n(A/p)$.
\end{proof}

\begin{theorem}\label{thm:F-inverse action'}
We use \cref{notation:dualizing}.
We define the $A$-module homomorphism
\[
\lambda_n \colon \phi^{n-1}_*A/p^n \xrightarrow{\phi^{n-1}_*\iota_n} \phi^{n-1}_*\omega_{A/p^n} \xrightarrow{\varphi_{n-1}^*} W_n\omega_{A/p}
\]
for every integer $n \geq 1$.
Then $\lambda_n$ satisfies the following properties;
\begin{itemize}
    \item[(C1)] We have $V^r[b] \cdot \lambda_n\left(\phi_*^{n-1}a\right)=p^r \lambda_n\left(\phi_*^{n-1}(ab^{p^{n-1-r}})\right)$ for $a,b \in A/p^n$, $r \geq 0$, and $n \geq 1$.
    \item[(C2)] We have $\lambda_n\left(\phi_*^{n-1}(pa)\right)=T \circ \lambda_{n-1}\circ u\left(\phi_*^{n-1}a\right)$ for $a \in A/p^{n-1}$ for every $n \geq 2$.
    \item[(C3)] We have the following commutative diagram in which horizontal sequence is exact;
    \[
    \begin{tikzcd}
    0 \arrow[r] &
    \phi^{n-1}_*A/p \arrow[r, "\cdot p^{n-1}"] \arrow[d, "u^{n-1}"] &
    \phi^{n-1}_*A/p^n \arrow[r,"\phi^{n-1}_*\pi"] \arrow[d, "\lambda_{n}"] &
    \phi^{n-1}_*A/p^{n-1} \arrow[r] \arrow[d, "\lambda_{n-1}"] &
    0 \\
    0 \arrow[r] &
    \omega_{A/p} \arrow[r, "{T^{n-1}}"] &
    W_n\omega_{A/p} \arrow[r, "V^*"] &
    F_*W_{n-1}\omega_{A/p} \arrow[r] &
    0.
    \end{tikzcd}
    \]
    for every $n \geq 2$.
\end{itemize}
\end{theorem}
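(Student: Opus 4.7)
The plan is to deduce all three properties directly from Proposition 3.4 by translating its assertions about $\varphi_{n-1}^*$ into assertions about $\lambda_n = \varphi_{n-1}^* \circ \phi^{n-1}_*\iota_n$, using the $A$-linear isomorphism $\iota_n \colon A/p^n \xrightarrow{\sim} \omega_{A/p^n}$ and its compatibility with the maps $\pi^*$, $\phi^*$ and multiplication by powers of $p$ appearing in Proposition 3.4.

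Property (C1) is immediate from Proposition 3.4(1): applying that formula with $\omega = \iota_n(a)$ and using the $A$-linearity of $\iota_n$ (so that $b^{p^{n-1-r}} \iota_n(a) = \iota_n(a b^{p^{n-1-r}})$) yields (C1) after unwinding the definition of $\lambda_n$.

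For (C2), the key input is the identification $\iota_n(pa) = \pi^*(\iota_{n-1}(a))$ for $a \in A/p^{n-1}$, which holds because both $\iota_n$ and $\iota_{n-1}$ are reductions of the single fixed isomorphism $\iota \colon A \xrightarrow{\sim} \omega_A$ while $\pi^*$ corresponds under $\iota$ to multiplication by $p$ (a short computation using the free resolutions $A \xrightarrow{p^k} A$ and Grothendieck duality). Combined with Proposition 3.4(2) and the definition $u = \iota_{n-1}^{-1} \circ \phi^* \circ \phi_*\iota_{n-1}$ (which rewrites $\phi^*(\phi_*\iota_{n-1}(a)) = \iota_{n-1}(u(\phi_*a))$), one obtains
\[
\lambda_n(\phi_*^{n-1}(pa)) = T \circ \varphi_{n-2}^* \circ \phi^*(\phi_*^{n-1}\iota_{n-1}(a)) = T \circ \lambda_{n-1}(u(\phi_*^{n-1}a)),
\]
which is (C2).

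For (C3), I will identify the top row of the target diagram as the image, under $\iota$, of the top row of Proposition 3.4(3). The same free-resolution computation shows that $\pi^* \colon \omega_{A/p} \to \omega_{A/p^n}$ corresponds to multiplication by $p^{n-1} \colon A/p \to A/p^n$ and that the map labelled $\cdot p \colon \omega_{A/p^n} \to \omega_{A/p^{n-1}}$ corresponds to the projection $\pi$; iterating the definition of $u$ identifies $(\phi^{n-1})^*$ on $\phi^{n-1}_*\omega_{A/p}$ with $u^{n-1}$ on $\phi^{n-1}_*A/p$ under $\iota_1$. With these identifications, the middle column $\varphi_{n-1}^*$ post-composed with $\iota_n$ becomes $\lambda_n$, and the diagram of Proposition 3.4(3) translates directly into that of (C3). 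The main obstacle is the bookkeeping of these $\iota$-identifications — verifying that $\pi^*$ corresponds to multiplication by the appropriate power of $p$ and that ``$\cdot p$'' corresponds to the projection — but both reduce to standard Grothendieck-duality manipulations on free resolutions, so no new ideas beyond Proposition 3.4 are required.
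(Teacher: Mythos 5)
Your proof is correct and takes essentially the same route as the paper: (C1) is a direct specialization of Proposition~3.4(1), (C2) is Proposition~3.4(2) combined with the $\iota$-compatibility $\iota_n(pa) = \pi^*(\iota_{n-1}(a))$ computed from the free resolutions $A \xrightarrow{p^k} A$, and (C3) is obtained by conjugating Proposition~3.4(3) by the various $\iota_k$. The paper phrases (C3) as "follows from (C2) and Proposition~3.4(3)", but the content is identical to your explicit identification of $\pi^*$ with $\cdot p^{n-1}$, of the map labelled $\cdot p$ with the projection $\pi$, and of $(\phi^{n-1})^*$ with $u^{n-1}$ via $\iota$ (with the small implicit convention that the left vertical in (C3) is $\lambda_1 \circ u^{n-1} = \iota_1 \circ u^{n-1}$, matching the target $\omega_{A/p}$).
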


\begin{proof}
Assertion \cond{C1} follows from \cref{prop:w dual} (1).
Next, we prove assertion \cond{C2}.
We have the following commutative diagram
\begin{equation}\label{eq:p and a}
    \begin{tikzcd}
    A/p^{n-1} \arrow[r, "\cdot p"] \arrow[d,"\iota_{n-1}"] & A/p^n \arrow[d,"\iota_n"] \\
    \omega_{A/p^{n-1}} \arrow[r, "\pi^*"] & \omega_{A/p^n}.
    \end{tikzcd}
\end{equation}
Indeed, we consider the following commutative diagram in which each exact sequence is exact;
\[
\begin{tikzcd}
0 \arrow[r] & A \arrow[r, "\cdot p^n"] \arrow[d, "\cdot p"] & A \arrow[r] \arrow[d, equal] & A/p^n \arrow[r] \arrow[d] & 0 \\
0 \arrow[r] & A \arrow[r, "\cdot p^{n-1}"] & A \arrow[r] & A/p^{n-1} \arrow[r] & 0
\end{tikzcd}
\]
Taking $\omega_A$-dual, we have
\[
\begin{tikzcd}
0 \arrow[r] & \omega_A \arrow[r, "\cdot p^{n-1}"] \arrow[d, equal] & \omega_A \arrow[r] \arrow[d, "\cdot p"] & \omega_{A/p^{n-1}} \arrow[r] \arrow[d, "a"] & 0 \\
0 \arrow[r] & \omega_A \arrow[r, "\cdot p^n"] & \omega_A \arrow[r] & \omega_{A/p^n}, \arrow[r] & 0
\end{tikzcd}
\]
thus, we obtain the commutative diagram (\ref{eq:p and a}).
Therefore, we obtain
\begin{align*}
    \lambda_n\left(\phi_*^{n-1}(pa)\right)
    &=\varphi_{n-1}^* \circ \iota_n\left(\phi_*^{n-1}(pa)\right)
    =\varphi_{n-1}^* \circ  \pi^* \circ \iota_{n-1}\left(\phi_*^{n-1}a\right) \\
    &= T \circ \varphi_{n-2}^* \circ \phi^* \circ \iota_{n-1}\left(\phi_*^{n-1}a\right) \\
    &=T \circ \varphi_{n-2}^* \circ \iota_{n-1} \circ  u\left(\phi_*^{n-1}a\right) \\
    &=T \circ \lambda_{n-1} \circ u\left(\phi_*^{n-1}a\right),
\end{align*}
thus we obtain assertion \cond{C2}.
Finally, assertion \cond{C3} follows from assertions \cond{C2} and (3) in \cref{prop:w dual}.
\end{proof}

\section{Structure of $\Hom_{W_n(A/p)}(F^e_*W_n(A/p),W_n\omega_{A/p})$}

\begin{proposition}\label{prop:description of trace}
We use \cref{notation:dualizing}.
We define an $A$-module homomorphism $\Psi^e_n$ by the composition
\begin{align*}
    \phi^{e+n-1}_*A/p^n \xrightarrow{\phi^e_*\lambda_n} \phi^e_*W_n\omega_{A/p} \xrightarrow{\sim} \Hom_{W_n(A/p)}(F^e_*W_n(A/p),W_n\omega_{A/p}).
\end{align*}
Then, for $g \in A/p^n$, the homomorphism $\Psi^e_n\paren{\phi_*^{e+n-1}g}$ coincides with the composition
\[
F^e_*W_n(A/p) \xrightarrow{\varphi_{n-1}} \phi^{e+n-1}_*A/p^n \xrightarrow{u^e\left(\phi^e_*(g\cdot-)\right)} \phi^{n-1}_*A/p^n \xrightarrow{\lambda_n} W_n\omega_{A/p}.
\]
\end{proposition}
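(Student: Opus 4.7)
The plan is to evaluate both sides on an arbitrary element $F^e_*\beta \in F^e_*W_n(A/p)$ and show that they agree in $W_n\omega_{A/p}$. The argument splits into three parts: an explicit description of the Grothendieck duality isomorphism, the $W_n(A/p)$-linearity of $\lambda_n$, and a trace-compatibility identity.

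First, I unpack the Grothendieck duality isomorphism $\phi^e_*W_n\omega_{A/p} \xrightarrow{\sim} \Hom_{W_n(A/p)}(F^e_*W_n(A/p), W_n\omega_{A/p})$. Since $\phi$ is a Frobenius lift, $F \circ s_\phi = s_\phi \circ \phi$ in $W_n(A)$, so $\phi^e_*W_n\omega_{A/p}$ and $F^e_*W_n\omega_{A/p}$ coincide as $A$-modules. Under the duality isomorphism, $\phi^e_*\omega$ corresponds to the map $F^e_*\beta \mapsto \mathrm{Tr}_{F^e}(\phi^e_*(\beta\omega))$, where $\mathrm{Tr}_{F^e} \colon \phi^e_*W_n\omega_{A/p} \to W_n\omega_{A/p}$ is the Grothendieck trace of the Frobenius $F^e$ on $W_n(A/p)$. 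Since $\lambda_n$ is $W_n(A/p)$-linear and the $W_n(A/p)$-module structure on $\phi^{n-1}_*A/p^n$ is induced by the ring map $\varphi_{n-1}$, we have $\beta \cdot \lambda_n(\phi^{n-1}_*g) = \lambda_n(\phi^{n-1}_*(g\,\varphi_{n-1}(\beta)))$, which gives
$$\Psi^e_n(\phi^{e+n-1}_*g)(F^e_*\beta) \;=\; \mathrm{Tr}_{F^e}\bigl(\phi^e_*\lambda_n(\phi^{n-1}_*(g\,\varphi_{n-1}(\beta)))\bigr).$$

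Second, it suffices to establish the identity $\mathrm{Tr}_{F^e} \circ \phi^e_*\lambda_n = \lambda_n \circ \phi^{n-1}_*u^e$ as maps $\phi^{e+n-1}_*A/p^n \to W_n\omega_{A/p}$. For this, I verify that $\varphi_{n-1} \circ F^e = \phi^e \circ \varphi_{n-1}$ as ring maps $W_n(A/p) \to A/p^n$; this is a direct calculation from the explicit formula for $\varphi_{n-1}$ using the congruence $\phi(a)^{p^k} \equiv a^{p^{k+1}} \pmod{p^{k+1}}$, valid because $\phi$ is a Frobenius lift. By Grothendieck trace functoriality applied to the two factorizations of this common ring map, $\mathrm{Tr}_{F^e} \circ \phi^e_*\varphi_{n-1}^* = \varphi_{n-1}^* \circ \phi^{n-1}_*(\phi^e)^*$. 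Substituting $\lambda_n = \varphi_{n-1}^* \circ \phi^{n-1}_*\iota_n$ and using the relation $(\phi^e)^* \circ \phi^e_*\iota_n = \iota_n \circ u^e$ (iterated from the definition of $u$) yields the desired identity, and combining with the previous step completes the proof.

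The main obstacle is the trace-compatibility step, which requires careful tracking of the several $A$-module and $W_n(A/p)$-module structures involved, especially the identification $\phi^e_*W_n\omega_{A/p} = F^e_*W_n\omega_{A/p}$ and the precise formulation of Grothendieck trace functoriality for a ring map admitting two distinct factorizations.
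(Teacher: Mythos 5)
Your proposal is correct and follows essentially the same route as the paper: unwind the duality isomorphism so that $\Psi^e_n(\phi^{e+n-1}_*g)$ becomes $\mathrm{Tr}_{F^e}$ applied to $\phi^e_*\lambda_n$ evaluated on $g\,\varphi_{n-1}(\beta)$, then invoke the compatibility square $\mathrm{Tr}_{F^e}\circ\phi^e_*\lambda_n = \lambda_n\circ u^e$. The paper merely asserts that square commutes; you go one step further and justify it via the identity $\varphi_{n-1}\circ F^e = \phi^e\circ\varphi_{n-1}$, trace functoriality for the two factorizations, and $(\phi^e)^*\circ\phi^e_*\iota_n = \iota_n\circ u^e$, which is a welcome additional check but not a different method.
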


\begin{proof}
Let $g \in A/p^n$.
Then, by the isomorphism 
\[
\Hom_{W_n(A/p)}(F^e_*W_n(A/p),W_n\omega_{A/p}) \simeq F^e_*\Hom_{W_n(A/p)}(W_n(A/p),W_n\omega_{A/p}),
\]
the homomorphism $\psi^e_{n,g}:=\Psi^e_n\paren{\phi_*^{e+n-1}(g)}$ is corresponds to the map $\psi$ such that the diagram
\[
\begin{tikzcd}
F^e_*W_n(A/p) \arrow[r, "\psi^e_{n,g}"] \arrow[d, "\psi"] & W_n\omega_{A/p} \\
F^e_*W_n\omega_{A/p} \arrow[ru,"T_{F^e}"']
\end{tikzcd}
\]
commutes, where $T_{F^e}$ is the Grothendieck trace map with respect to $F^e \colon W_n(A/p) \to F^e_*W_n(A/p)$.
Furthermore, by the isomorphism
\[
\Hom_{W_n(A/p)}(W_n(A/p),W_n\omega_{A/p}) \simeq W_n\omega_{A/p},
\]
$\psi$ corresponds to $\psi(1) \in W_n\omega_{A/p}$.
Then $\psi$ satisfies $\psi(\alpha)=\alpha\cdot \psi(1)$ for $\alpha \in W_n(A/p)$.
Now, since $\psi(1)=\lambda_n\left(\phi_*^{n-1}g\right)$ by the definition of $\Psi^e_n$, we have
\[
\psi(\alpha)=\alpha \cdot \psi(1)=\alpha \cdot \lambda_n(\phi_*^{n-1}g)=\lambda_n\left(\phi_*^{n-1}(g\varphi_{n-1}(\alpha))\right).
\]
We consider the following commutative diagram
\[
\begin{tikzcd}
F^e_*W_n\omega_{A/p} \arrow[r,"T_{F^e}"] & W_n\omega_{A/p} \\
\phi^{e+n-1}_*(A/p^n) \arrow[r, "u^e"] \arrow[u, "\phi^e_*\lambda_n"] & \phi^{n-1}_*A/p^n, \arrow[u, "\lambda_n"]
\end{tikzcd}
\]
then the image of $\psi(\alpha)=\lambda_n\left(\phi_*^{n-1}(g\varphi_{n-1}(\alpha))\right)$ by $T_{F^e}$ is
\[
\lambda_n \circ u^e\left(\phi_*^{e+n-1}(g\varphi_{n-1}(\alpha))\right),
\]
as desired.
\end{proof}

\begin{proposition}\label{prop:psi^e_n is module hom}
We use \cref{notation:dualizing}.
Let $g \in A/p^n$ and $\psi^e_{n,g}:=\Psi^e_n\paren{\phi_*^{e+n-1}(g)}$ for all $e,n$.
Then we have the following formulas;
\begin{enumerate}
    \item $\psi^e_{n,g}\paren{F_*^{e}(V^r[a])}=T^r \circ \lambda_{n-r} \circ u^{e+r}\left(\phi_*^{e+n-1}(ga^{p^{n-r-1}})\right)$ for $n,e \geq 1$, $n-1 \geq r \geq 0$,  $a \in A/p^n$,
    \item $T^s \circ \psi^{e+s}_{n-s,g}\left(F_*^{e+s}(F^s\alpha F^e\beta)\right)
    =(V^s\beta)\psi^e_{n,g}\left(F_*^{e}\alpha\right)$
    for $n,e \geq 1$, $n-1 \geq s \geq 0$, $\alpha,\beta \in W_n(A/p)$, 
    \item $\psi^e_{n,g}\left(F^e_*(V\alpha)\right)=T \circ \psi^{e+1}_{n-1,g}\left(F^{e+1}_*\alpha\right)$ for $n \geq 2$, $e \geq 1$, $\alpha \in W_n(A/p)$, and
    \item $\psi^e_{n,g}\left(F^e_*(F\alpha)\right)=\psi^{e-1}_{n,u\left(\phi_*g\right)}\left(F^{e-1}_*\alpha\right)$ for $e \geq 2$, $n \geq 1$, $\alpha \in W_n(A/p)$.
\end{enumerate}
\end{proposition}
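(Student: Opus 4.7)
The plan is to derive all four formulas from the explicit description of $\psi^e_{n,g}$ supplied by \cref{prop:description of trace},
\[
\psi^e_{n,g}(F^e_*\alpha) = \lambda_n \circ u^e\bigl(\phi_*^{e+n-1}(g\cdot\varphi_{n-1}(\alpha))\bigr),
\]
combined with the structural identities in \cref{thm:F-inverse action'} and standard Witt-vector arithmetic. I would prove the assertions in the order (1), (3), (4), (2), since (2) will be bootstrapped from (3).

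For (1), iterating \cref{lem: V and w} gives $\varphi_{n-1}(V^r[a]) = p^r\varphi_{n-r-1}([a]) = p^r a^{p^{n-r-1}}$ in $A/p^n$. Substituting this into the formula above and using the $A$-linearity of $u^e$ to pull the scalar $p^r$ outside, the claim reduces to
\[
\lambda_n\bigl(\phi_*^{n-1}(p^r y)\bigr) = T^r \circ \lambda_{n-r} \circ u^r\bigl(\phi_*^{n-1}y\bigr),
\]
which is the $r$-fold iteration of property \cond{C2} in \cref{thm:F-inverse action'}. For (3) the same template applies with $V\alpha$ in place of $V^r[a]$: \cref{lem: V and w} yields $\varphi_{n-1}(V\alpha) = p\varphi_{n-2}(\alpha)$, and a single application of \cond{C2} converts $\lambda_n(\phi_*^{n-1}(p\cdot-))$ into $T\circ\lambda_{n-1}\circ u$; absorbing the new $u$ into $u^e$ produces exactly $T\circ\psi^{e+1}_{n-1,g}(F^{e+1}_*\alpha)$.

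The key step in (4) is the ghost-component identity
\[
\varphi_{n-1}(F\alpha) \equiv \phi\bigl(\varphi_{n-1}(\alpha)\bigr) \pmod{p^n}.
\]
I would prove it by lifting $\alpha$ to $\tilde\alpha\in W_n(A)$, applying the universal identity $w_{n-1}\circ F = w_n$ on ghost components of $W(A)$, and using the binomial congruence $(x+py)^{p^k} \equiv x^{p^k}\pmod{p^{k+1}}$ to show $w_n(\tilde\alpha) \equiv \phi(w_{n-1}(\tilde\alpha))\pmod{p^n}$. Given this identity, the $A$-linearity of $u\colon\phi_*A/p^n\to A/p^n$, in the form $u(\phi_*(\phi(c)\cdot g)) = c\cdot u(\phi_*g)$, moves $\phi(\varphi_{n-1}(\alpha))$ past the outermost $\phi_*$, after which the expression matches the formula for $\psi^{e-1}_{n,u(\phi_*g)}(F^{e-1}_*\alpha)$ produced by \cref{prop:description of trace}.

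Finally, I would derive (2) from (3). Since $\psi^e_{n,g}$ is $W_n(A/p)$-linear and the $W_n(A/p)$-action on $F^e_*W_n(A/p)$ is $\gamma\cdot F^e_*\alpha = F^e_*(F^e\gamma\cdot\alpha)$, one has $(V^s\beta)\cdot\psi^e_{n,g}(F^e_*\alpha) = \psi^e_{n,g}\bigl(F^e_*(F^e(V^s\beta)\cdot\alpha)\bigr)$. Since $A/p$ has characteristic $p$, $FV = VF = p$ in the Witt vectors of $A/p$, so $F^e V^s = V^s F^e$; Frobenius reciprocity $V^s\gamma\cdot\delta = V^s(\gamma\cdot F^s\delta)$ then rewrites the argument as $V^s(F^e\beta\cdot F^s\alpha)$. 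Iterating (3) a total of $s$ times converts $\psi^e_{n,g}(F^e_*V^s(-))$ into $T^s\circ\psi^{e+s}_{n-s,g}(F^{e+s}_*(-))$, yielding (2). The main obstacle is the ghost-component identity in (4): one must carefully control the $p$-adic precision when expanding each $\phi(\tilde a_i)^{p^{n-1-i}}$ and summing the resulting terms of $w_n(\tilde\alpha)$. Once this is in place, the remaining work is to verify that every appearance of $\psi^{e'}_{n',g}$, $\lambda_{n'}$, and $u^{r'}$ acts on elements reduced modulo the correct power of $p$.
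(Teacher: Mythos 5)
Your plan is correct and, for parts (1), (3), and (4), essentially reproduces the paper's computation: both hinge on the description of $\psi^e_{n,g}$ from \cref{prop:description of trace}, \cref{lem: V and w}, and \cond{C2} of \cref{thm:F-inverse action'}. The one genuine variation is your derivation of (2) from (3): the paper instead expands $T^s\circ\psi^{e+s}_{n-s,g}(F^{e+s}_*(F^s\alpha F^e\beta))$ directly, applying \cond{C2} and \cref{lem: V and w} $s$ times and then Frobenius reciprocity and $W_n$-linearity to reassemble $(V^s\beta)\psi^e_{n,g}(F^e_*\alpha)$. Your route packages the repeated \cond{C2}/V-shift steps as iterations of (3) and works backwards from the right-hand side; both unwind to the same Witt-vector identities ($FV=VF$ in characteristic $p$, $V^s\gamma\cdot\delta=V^s(\gamma F^s\delta)$), so your version is a modest reorganization rather than a new argument, but it does make the dependence on (3) explicit, which is clean. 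One small imprecision in (4): if you lift $\alpha$ only to $\tilde\alpha\in W_n(A)$, the ghost component $w_n(\tilde\alpha)$ is not defined and $w_{n-1}\circ F=w_n$ cannot be invoked as stated; either lift to $W_{n+1}(A)$ (or $W(A)$), or simply bypass the universal identity by noting that $F$ on $W_n(A/p)$ is componentwise $p$-th power, so $\varphi_{n-1}(F\alpha)=\tilde a_0^{p^n}+p\tilde a_1^{p^{n-1}}+\cdots+p^{n-1}\tilde a_{n-1}^p$ directly, which is what the paper does before invoking the binomial congruence and the $A$-linearity $u(\phi_*(\phi(c)g))=c\,u(\phi_*g)$.
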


\begin{proof}
First, we have
\begin{align*}
    \psi^e_{n,g}\left(F^e_*(V^r[a])\right) 
    &\overset{(\star_1)}{=} \lambda_n \circ u^e\left(\phi^{e+n-1}_*(gp^ra^{p^{n-1-r}})\right) \\
    &\overset{(\star_2)}{=} T^r \circ \lambda_{n-r} \circ u^{e+r}\left(\phi_*^{e+n-1}(ga^{p^{n-1-r}})\right),
\end{align*}
where $(\star_1)$ follows from \cref{prop:description of trace} and $(\star_2)$ follows from \cond{C2} in \cref{thm:F-inverse action'}, thus we obtain assertion (1).
Next, we have
\begin{align*}
T^s \circ \psi^{e+s}_{n-s,g}\left(F^{e+s}_*(F^s\alpha F^e\beta)\right) 
&\eqtag{3} T^s \circ \lambda_{n-s} \circ u^{e+s}\left(\phi_*^{e+n-1}(g\varphi_{n-s-1}(F^s\alpha F^e\beta))\right) \\
&\smash{\overset{(\star_4)}{=}} \lambda_n \circ u^{e}\left(\phi^{e+n-1}_*(p^s g \varphi_{n-s-1}(F^s\alpha F^e\beta))\right) \\
&\smash{\overset{(\star_5)}{=}} \lambda_n \circ u^{e}\left(\phi^{e+n-1}_*(g \varphi_{n-1}(V^s(F^s \alpha F^e\beta))) \right) \\
&= \lambda_n \circ u^{e}\left(\phi^{e+n-1}_*(g \varphi_{n-1}(\alpha V^sF^{e}\beta))\right) \\
&= \psi^e_{n,g}\left(\phi^{e+n-1}_*(\alpha V^sF^e\beta)\right) \\
&= (V^s \beta) \psi^e_{n,g}\left(\phi_*^{e+n-1}\alpha\right)
\end{align*}
where $(\star_3)$ follows from \cref{prop:description of trace}, $(\star_4)$ follows from \cond{C2} in \cref{thm:F-inverse action'}, and $(\star_5)$ follows from \cref{lem: V and w}, thus we obtain assertion (2).
Assertion (3) follows from the computation
\begin{align*}
    \psi^e_{n,g}\left(F^e_*(V\alpha)\right)
    &=  \lambda_n\circ u^e\left(\phi^{e+n-1}_*(g\varphi_{n-1}(V\alpha))\right) \\
    &\eqtag{6}  \lambda_n \circ u^e\left(\phi^{e+n-1}_*(pg\varphi_{n-2}(\alpha))\right) \\
    &\eqtag{7} T \circ \lambda_{n-1} \circ u^{e+1}\left(\phi^{e+n-1}_*(g\varphi_{n-2}(\alpha))\right) \\
    &=T \circ \psi^{e+1}_{n-1,g}\left(\phi^{e+n-1}_*\alpha\right),
\end{align*}
where $(\star_6)$ follows from \cref{lem: V and w} and $(\star_7)$ follows from \cond{C2} in \cref{thm:F-inverse action'}.
Finally, we prove assertion (4).
For $\alpha=(a_0,\ldots,a_{n-1}) \in W_n(A/p)$, we have
\begin{align*}
    \psi^e_{n,g}\left(F^e_*(F\alpha)\right) 
    &= \lambda_n \circ u^e\left(\phi_*^{e+n-1}(g\varphi_{n-1}(F\alpha))\right) \\
    &= \lambda_n\left(u^e\left(\phi_*^{e+n-1}(g(a_0^{p^n}+pa_{1}^{p^{n-1}}+\cdots+p^{n-1}a_{n-1}^p))\right)\right) \\
    &= \lambda_n\left(u^{e-1}\left(\phi_*^{e+n-1}(u(\phi_*g)(a_0^{p^{n-1}}+\cdots+p^{n-1}a_{n-1}))\right)\right) \\
    &= \psi^{e-1}_{n,u\left(\phi_*g\right)}\left(\alpha\right),
\end{align*}
as desired.
\end{proof}

\begin{thm}\label{thm:structure of dual'}
We use \cref{notation:dualizing}.
The map 
\[
\Psi^e_n \colon \phi^{e+n-1}_*A/p^n \to (F^e_*W_n(A/p))^*:=\Hom_{W_n(A/p)}(F^e_*W_n(A/p),W_n\omega_{A/p})
\]
is an $A$-module homomorphism and satisfies the following conditions;
\begin{enumerate}
    \item We have $\Psi^e_n\left(\phi_*^{e+n-1}(pg)\right)=T \circ \Psi^e_{n-1}\left(\phi_*^{e+n-2}u(\phi_*g)\right)$ for $n \geq 2$, $e \geq 1$, $g \in A/p^n$. 
    \item We have the commutative diagram in which each horizontal sequence is exact;
    \[
    \begin{tikzcd}
    0 \arrow[r] & F^{e+n-1}_*A/p \arrow[r, "\cdot p^{n-1}"] \arrow[d, "\Psi^e_1 \circ u^{n-1}"'] & \phi^{e+n-1}_*A/p^n \arrow[r] \arrow[d, "\Psi^e_n"'] & \phi^{e+n-1}_*A/p^{n-1} \arrow[r] \arrow[d, "\Psi^{e+1}_{n-1}"'] & 0 \\
    0 \arrow[r] & (F^e_*A/p)^* \arrow[r, "{(F^e_*Res)^*}"'] & (F^e_*W_n(A/p))^* \arrow[r, "{(F^e_*V)^*}"'] & (F^{e+1}_*W_{n-1}(A/p))^* \arrow[r] & 0
    \end{tikzcd}
    \]
    for $n \geq 2$, $e \geq 1$.
    In particular, $\Psi^e_n$ is surjective for all $e,n$.
    \item For $n, e \geq 1$,  $g \in A/p^n$, $g \in \mathrm{ker}(\Psi^e_n)$ if and only if $u^{r-1}\left(\phi^{r-1}_*g\right) \in (p^r)$ for all $n \geq r \geq 1$.
    \item We have $\Psi^e_n\left(\phi^{e+n-1}_*(gf^{p^{e+n-1}})\right)=[f]\Psi^e_n\left(\phi^{e+n-1}_*g\right)$ for $n,e \geq 1$, $f,g \in A/p^n$.
\end{enumerate}
\end{thm}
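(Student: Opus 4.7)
The theorem has four parts: (1) a compatibility of $\Psi^e_n$ with multiplication by $p$ upstairs and the Grothendieck trace $T$ downstairs, (2) a commutative diagram of short exact sequences giving surjectivity of $\Psi^e_n$, (3) a characterization of $\ker(\Psi^e_n)$, and (4) a projection formula for the $[f]$-action. The plan is to prove (1) and (4) directly from the explicit formula
\[
\Psi^e_n(\phi^{e+n-1}_*g)(\alpha) = \lambda_n \circ u^e(\phi^{e+n-1}_*(g \cdot \varphi_{n-1}(\alpha)))
\]
of \cref{prop:description of trace}, then derive (2) from (1) together with the standard Witt-vector exact sequence, and finally deduce (3) inductively from the diagram of (2). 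That $\Psi^e_n$ is $A$-linear is immediate, since it factors through the $A$-linear maps $u^e$ and $\lambda_n$.

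For (1), $A$-linearity of $u^e$ combined with $\phi^e(p)=p$ pulls the $p$ outside, giving $\lambda_n(\phi^{n-1}_*(p c))$ with $c := u^e(\phi^{e+n-1}_*(g \varphi_{n-1}(\alpha)))$. Then \cond{C2} of \cref{thm:F-inverse action'} converts this to $T \circ \lambda_{n-1}\circ u(\phi^{n-1}_* c) = T\circ\lambda_{n-1}(\phi^{n-2}_* u^{e+1}(g\varphi_{n-1}(\alpha)))$. Using the congruence $\varphi_{n-1}(\alpha)\equiv \phi(\varphi_{n-2}(Res(\alpha)))\pmod{p^{n-1}}$ already observed in the proof of \cref{prop:w dual}(2), together with the element-level identity $u(\phi(b)x) = b u(x)$ applied to the outermost $u$, the $u^{e+1}$-expression rewrites as $u^e(\varphi_{n-2}(Res(\alpha))\cdot u(\phi_* g))$, which matches $\Psi^e_{n-1}(\phi^{e+n-2}_* u(\phi_* g))(Res(\alpha))$ by \cref{prop:description of trace}. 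For (4), the arithmetic key is $\phi^e(f^{p^{n-1}})\equiv f^{p^{e+n-1}}\pmod{p^n}$: writing $\phi^e(f) = f^{p^e}+p\delta$, the binomial expansion of $(f^{p^e}+p\delta)^{p^{n-1}}$ has $i$-th term of $p$-adic valuation $v_p\binom{p^{n-1}}{i}+i = (n-1-v_p(i))+i \ge n$ for every $i\ge 1$. Given this, $A$-linearity of $u^e$ yields $u^e(\phi^{e+n-1}_*(g f^{p^{e+n-1}}\varphi_{n-1}(\alpha))) = f^{p^{n-1}}\cdot u^e(\phi^{e+n-1}_*(g\varphi_{n-1}(\alpha)))$, and then \cond{C1} of \cref{thm:F-inverse action'} at $r=0$ converts the $f^{p^{n-1}}$-multiplication into the $[f]$-action, yielding (4).

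For (2), exactness of the top row is the evident $0 \to A/p \xrightarrow{\cdot p^{n-1}} A/p^n \to A/p^{n-1} \to 0$. Exactness of the bottom row follows by applying $\Hom_{W_n(A/p)}(-, W_n\omega_{A/p})$ to the Witt-vector sequence $0\to F_*W_{n-1}(A/p)\xrightarrow{V} W_n(A/p)\xrightarrow{Res^{n-1}} A/p \to 0$; right exactness needs $\Ext^1_{W_n(A/p)}(F^e_*A/p, W_n\omega_{A/p})=0$, which by $W_n(A/p)$--$A/p$ adjunction equals $\Ext^1_{A/p}(F^e_*A/p,\omega_{A/p})=0$ because $F^e_*A/p$ is a free $A/p$-module. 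Commutativity of the left square follows by iterating (1) to obtain $\Psi^e_n\paren{\phi^{e+n-1}_*(p^{n-1}a)} = T^{n-1}\circ \Psi^e_1 \circ u^{n-1}(a)$ (with $T^{n-1}$ absorbed into the identification of the target of $\Psi^e_1$ with $(F^e_*A/p)^*$), and commutativity of the right square is \cref{prop:psi^e_n is module hom}(3). Surjectivity of $\Psi^e_n$ follows by induction on $n$: the base $\Psi^e_1$ is the Grothendieck-duality isomorphism $\phi^e_*A/p \xrightarrow{\sim} \Hom_{A/p}(F^e_*A/p, \omega_{A/p})$ provided by the generator $u^e$; the inductive step applies the 5-lemma, with surjectivity of the left vertical using surjectivity of $u\colon \phi_*A/p \to A/p$ on the regular ring.

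For (3), induct on $n$. The base $n=1$ is immediate from the isomorphism $\Psi^e_1$ and the fact that $\{g\in A/p : g\in(p)\}=0$. For the inductive step, the snake lemma applied to the diagram of (2), together with surjectivity of $\Psi^e_1\circ u^{n-1}$, produces the short exact sequence
\[
0 \to p^{n-1}\ker\paren{u^{n-1}\colon A/p\to A/p} \to \ker(\Psi^e_n) \to \ker(\Psi^{e+1}_{n-1}) \to 0.
\]
By the inductive hypothesis, $\ker(\Psi^{e+1}_{n-1})$ equals $\{h\in A/p^{n-1} : u^{r-1}(\phi^{r-1}_*h)\in (p^r),\ 1\le r\le n-1\}$. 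A direct unwinding shows that $g\in A/p^n$ satisfies $u^{r-1}(\phi^{r-1}_*g)\in (p^r)$ for $1\le r\le n$ if and only if its reduction mod $p^{n-1}$ lies in the $(n-1)$-version and $u^{n-1}(\phi^{n-1}_*g)\equiv 0\pmod{p^n}$; the resulting set fits into the same short exact sequence, with the surjection onto the $(n-1)$-version using surjectivity of $u^{n-1}\colon A/p \to A/p$. The 5-lemma then concludes. I expect the main obstacle to lie in this last bookkeeping step: one must carefully use the identities $u^{r-1}(\phi^{r-1}_*(p^s h)) = p^s u^{r-1}(\phi^{r-1}_*h)$ and verify that the condition $u^{n-1}(\phi^{n-1}_*g)\equiv 0\pmod{p^n}$ exactly captures the ``extra'' part of $\ker(\Psi^e_n)$ beyond $\ker(\Psi^{e+1}_{n-1})$, so that the two short exact sequences genuinely match.
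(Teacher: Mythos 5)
Your overall strategy matches the paper's, and parts (1), (2), and (4) are sound, occasionally by routes the paper does not take. For (1), you manipulate the explicit formula directly, whereas the paper composes two of the identities from \cref{prop:psi^e_n is module hom} (namely $\psi^e_{n,g}(F^e_*(V\alpha))=T\circ\psi^{e+1}_{n-1,g}(F^{e+1}_*\alpha)$ followed by $\psi^e_{n,g}(F^e_*(F\alpha))=\psi^{e-1}_{n,u(\phi_*g)}(F^{e-1}_*\alpha)$); both amount to the same manipulation. For (2), you obtain exactness of the bottom row by an $\Ext^1$-vanishing argument via Grothendieck-duality adjunction and freeness of $F^e_*A/p$ over $A/p$, whereas the paper deduces it from \cond{C3} in \cref{thm:F-inverse action'}, which is proved upstream by dualizing the Witt-vector exact sequence; your route is a legitimate alternative and perhaps more self-contained. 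For (4), your elementary $p$-adic valuation argument that $\phi^e(f^{p^{n-1}})\equiv f^{p^{e+n-1}}\pmod{p^n}$ (using $v_p\binom{p^{n-1}}{i}=n-1-v_p(i)$ so that the $i$-th binomial term has valuation $\geq n$ for $i\geq 1$) is correct and replaces the paper's Witt-vector bookkeeping $V^r[af^{p^{e+r}}]=[f^{p^e}]V^r[a]$; again both work.

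The one genuine gap is in (3). You deduce the characterization of $\ker(\Psi^e_n)$ by showing that both $\ker(\Psi^e_n)$ and the candidate set $K_n:=\{g: u^{r-1}(\phi^{r-1}_*g)\in(p^r),\ 1\leq r\leq n\}$ fit into short exact sequences with the same outer terms $p^{n-1}\ker(u^{n-1})$ and $\ker(\Psi^{e+1}_{n-1})=K_{n-1}$, and then invoke the five lemma. But the five lemma needs an actual morphism between the two short exact sequences restricting to the identities on the ends, and you do not have one: both $K_n$ and $\ker(\Psi^e_n)$ are subgroups of $A/p^n$, but neither inclusion is yet known, and two subgroups of a fixed group can have the same intersection with $\ker(\pi)$ and the same image under $\pi$ without being equal (e.g.\ $\{(a,0)\}$ and $\{(a,a)\}$ in $\bZ/2\times\bZ/2$ with $\pi$ the first projection). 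One has to establish at least one containment directly. The paper does exactly this: after showing $g\in(p)$, it writes $g=pg'$, uses property (1) to get $\Psi^e_n(\phi_*^{e+n-1}g)=T\circ\Psi^e_{n-1}(u(\phi_*g'))$, and then uses the injectivity of $T$ to reduce to the inductive hypothesis for $\Psi^e_{n-1}$; the converse direction is then immediate from the same identity. Your proof should be repaired along these lines rather than by the five lemma.
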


\begin{proof}
By construction, $\Psi^e_{n}$ is an $A$-module homomorphism.
Therefore, by Proposition \ref{prop:psi^e_n is module hom}, we have
\begin{align*}
    \Psi^e_n\left(\phi^{e+n-1}_*(pg)\right) (F^e_*\alpha)
    &=p\Psi^e_n\left(\phi^{e+n-1}_*g\right)(F^e_*\alpha) \\
    &=T \circ \psi^{e+1}_{n-1,g}\left(F^{e+1}_*(F\alpha)\right) \\
    &= T \circ \lambda_{n-1} \circ u^e\left(\phi^{e+n-2}_*(u(\phi_*g)\varphi_{n-2}(\alpha))\right),
\end{align*}
thus we obtain assertion (1).
Assertion (2) follows from Theorem \ref{thm:F-inverse action'} (2).
The surjectivity follows from the snake lemma and the surjectivity of $u$ and $\Psi^e_1$.
We note that $\Psi^e_1$ is an isomorphism.

Next, we prove assertion (3) by induction on $n$.
For $n=1$, the assertion follows from the fact that $\Psi^e_1$ is an isomorphism.
We assume $n \geq 2$.
By (2), we have the exact sequence
\[
0 \to \mathrm{ker}(\Psi^e_1 \circ u^{n-1}) \to \mathrm{ker}(\Psi^e_n) \to \mathrm{ker}(\Psi^{e+1}_{n-1}) \to 0.
\]
If  $g \in \mathrm{ker}(\Psi^e_n)$,
then the image in $A/p^{n-1}$ is contained in $\Psi^{e+1}_{n-1}$.
By the induction hypothesis, we have $g \in (p)$.
Let $g=pg'$.
Then we have 
\[
\Psi^e_n\left(\phi^{e+n-1}_*g\right)=T \circ \Psi^e_{n-1} \circ u\left(\phi^{e+n-1}_*g'\right)
\]
by (1).
Since $T$ is injective, $\Psi^e_n\left(\phi^{e+n-1}_*g\right)=0$ implies $\Psi^e_{n-1} \circ u\left(\phi^{e+n-1}_*g'\right)=0$.
By the induction hypothesis, we have $u^{r}\left(\phi^r_*g'\right) \in (p^r)$ for all $1 \leq r \leq n-2$.
Thus, $u^{r-1}\left(\phi^{r-1}_*g\right) \in (p^{r})$ for all $2 \leq r \leq n-1$.
Combining $g \in (p)$, we obtain $u^{r-1}\left(\phi^{r-1}_*g\right) \in (p^r)$ for all $1 \leq r \leq n-1$, as desired.
Next, we assume $u^{r-1}\left(\phi^{r-1}_*g\right) \in (p^r)$ for $1 \leq r \leq n-1$.
Then, $u^{r}\left(\phi^r_*(g/p)\right) \in (p^r)$ for $1 \leq r \leq n-2$, thus $\Psi^e_n\left(\phi^{e+n-1}_*g\right)=\Psi^e_{n-1} \circ u\left(\phi_*^{e+n-1}(g/p)\right)=0$ by the induction hypothesis, as desired.

Finally, we prove assertion (4).
We have
\begin{align*}
    &\psi^e_{n,f^{p^{e+n-1}}g}\left(F^e_*(V^r[a])\right)
    =T^r \circ \lambda_{n-r} \circ u^{e+r}\left(\phi^{e+n-1}_*(gf^{p^{e+n-1}}a^{p^{n-r-1}})\right) \\
    &\overset{(\star)}{=}\psi^e_{n,g}\left(F^e_*(V^r[af^{p^{e+r}}])\right)
    =\psi^e_{n,g}\left(F^e_*([f^{p^{e}}]V^r[a])\right)
    =[f]\psi^e_{n,g}\left(F^e_*(V^r[a])\right)
\end{align*}
for every $a \in A/p$, where $(\star)$ follows from \cref{prop:psi^e_n is module hom} (1), thus we obtain assertion (4).
\end{proof}

\section{Fedder-type criterion for quasi-$F^e$-splitting and quasi-$F$-regularity}

\begin{notation}\label{notation:fedder}
Let $R$ be an $F$-finite regular local $\F_p$-algebra.
By \cite{DeJong95}*{Remark~1.2.3} and \cite{kty}*{Lemma~3.1}, there exist $p$-torsion free regular local ring $(A,\m)$ and a lift of Frobenius $\phi \colon A \to A$ such that $A/pA \simeq R$.
We also use \cref{notation:dualizing}.
\end{notation}

\begin{lemma}\label{lem:including condition}
We use \cref{notation:fedder}.
Let $f \in A$, $a \in A$, and $e \geq 1$.
We assume that $f,p$ is a regular sequence.
Then
\begin{itemize}
    \item[(1)] $T(\omega) \in [f] W_{e+1}\omega_{R}$ if and only if $\omega \in [f] W_e\omega_R$ for $\omega \in W_e\omega_R$, 
    \item[(2)] $\lambda_e \circ u^s\left(\phi^{e+s-1}_*(f^{p^{e+s-1}}a)\right) \in [f] W_e\omega_R$ for all $s \geq 0$, and
    \item[(3)] for $g \in A$, if
    \[
    \lambda_e\left(\phi^{e-1}_*(gA)\right) \subseteq [f]W_e\omega_R,
    \]
    then $g \in (f^{p^{e-1}},p^e)$.
\end{itemize}
\end{lemma}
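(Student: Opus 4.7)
For part (1), the ``if'' direction is immediate from the $W_{e+1}(R)$-linearity of $T$. For the converse, I would use the short exact sequence $0 \to W_e\omega_R \xrightarrow{T} W_{e+1}\omega_R \to F^e_*\omega_R \to 0$, dual to $0 \to F^e_*R \xrightarrow{V^e} W_{e+1}(R) \xrightarrow{\mathrm{Res}} W_e(R) \to 0$. Since $f, p$ is a regular sequence, $[f]$ acts as a non-zero divisor on each of the three terms (on $F^e_*\omega_R$ it acts by multiplication by $f^{p^e}$, which is a non-zero divisor because $f$ is). The snake lemma applied to multiplication by $[f]$ then yields injectivity of the induced map $W_e\omega_R/[f] \to W_{e+1}\omega_R/[f]$, which is the desired implication.

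For part (2), the case $s=0$ follows from \cond{C1} in \cref{thm:F-inverse action'} with $n=e$, $r=0$, $b=f$: namely $\lambda_e\paren{\phi^{e-1}_*(f^{p^{e-1}}a)} = [f]\lambda_e\paren{\phi^{e-1}_*a} \in [f]W_e\omega_R$. For $s\geq1$, I would combine the $A$-linearity of $u^s$ (where $A$ acts on $\phi^s_*A/p^e$ through $\phi^s$) with the congruence $\phi^s(f^{p^{e-1}}) \equiv f^{p^{e+s-1}} \pmod{p^e}$, which follows by binomial expansion of $(f^{p^s}+p\delta)^{p^{e-1}}$ since every cross-term $\binom{p^{e-1}}{k}p^k$ has $p$-adic valuation $\geq e$ for $k\geq1$ by Kummer's theorem. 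With this,
\[
u^s\paren{\phi^{e+s-1}_*(f^{p^{e+s-1}}a)} = \phi^{e-1}_*\paren{f^{p^{e-1}}\, u^s(\phi^s_*a)},
\]
and \cond{C1} again places this in $[f]W_e\omega_R$.

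For part (3), I would induct on $e$. The base case $e=1$ is immediate, since $\lambda_1=\iota_1$ is an isomorphism $A/p\xrightarrow{\sim}\omega_R$. For the inductive step $e\geq2$: applying $V^*$ from \cond{C3} to the hypothesis, and using the twist $[f]\cdot F_*\omega = F_*([f^p]\omega)$ governing the $W_e(R)$-action on $F_*W_{e-1}\omega_R$ (a consequence of $F([f])=[f^p]$ and $\alpha V(\beta)=V(F(\alpha)\beta)$), the hypothesis descends to $\lambda_{e-1}\paren{\phi^{e-2}_*(\bar g A)} \subseteq [f^p]W_{e-1}\omega_R$, where $\bar g$ denotes $g$ modulo $p^{e-1}$. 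The inductive hypothesis with $f$ replaced by $f^p$ (still forming a regular sequence with $p$) yields $\bar g \in ((f^p)^{p^{e-2}},p^{e-1}) = (f^{p^{e-1}},p^{e-1})$, so we may write $g = f^{p^{e-1}}h + p^{e-1}k$ in $A$. Substituting back, the $f^{p^{e-1}}h$ contribution gives $[f]\lambda_e(\phi^{e-1}_*(ha))$ by \cond{C1}, automatically in $[f]W_e\omega_R$, while iterating \cond{C2} gives $\lambda_e(\phi^{e-1}_*(p^{e-1}ka)) = T^{e-1}\paren{\iota_1(u^{e-1}(\phi^{e-1}_*(ka)))}$. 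Since the sum must lie in $[f]W_e\omega_R$ for every $a$, iterating part (1) forces $\iota_1(u^{e-1}(\phi^{e-1}_*(kA)))\subseteq f\omega_R$, i.e., $u^{e-1}(\phi^{e-1}_*(kA))\subseteq(f)$ in $R$. By the standard Fedder identification $\phi^{e-1}_*R\xrightarrow{\sim}\Hom_R(F^{e-1}_*R,R)$, $\phi^{e-1}_*y\mapsto[F^{e-1}_*x\mapsto u^{e-1}(F^{e-1}_*(yx))]$, this forces $\bar k\in(f)^{[p^{e-1}]}=(f^{p^{e-1}})$ in $R$, whence $k\in(f^{p^{e-1}},p)$ and finally $g\in(f^{p^{e-1}},p^e)$. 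The main obstacle is orchestrating this inductive step: the Frobenius twist $[f]\mapsto[f^p]$ under $V^*$ is essential, for otherwise one would only obtain $\bar g\in(f^{p^{e-2}},p^{e-1})$, and then the first summand of the decomposition would fail to be automatically absorbed by the $[f]$-condition via \cond{C1}.
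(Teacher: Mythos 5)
Your proposal is correct, and it takes a genuinely different route from the paper in parts (2) and (3) (part (1) is essentially identical — the same short exact sequence, snake lemma, and reduction to injectivity of multiplication by $f^{p^e}$ on $F^e_*\omega_R$).

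For part (2), the paper lifts everything up to $W_{e+s}\omega_R$: applying \cond{C2} iteratively it writes $T^s\circ\lambda_e\circ u^s\bigl(\phi^{e+s-1}_*(f^{p^{e+s-1}}a)\bigr)=p^s[f]\lambda_{e+s}\bigl(\phi^{e+s-1}_*a\bigr)$ via \cond{C1}, and then descends through part (1) applied $s$ times. You instead reduce directly to the $s=0$ case on the level of $W_e\omega_R$ by absorbing $f^{p^{e+s-1}}$ through $u^s$ via $A$-linearity, using the congruence $\phi^s(f^{p^{e-1}})\equiv f^{p^{e+s-1}}\pmod{p^e}$ (justified by the Kummer-valuation estimate). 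Both are sound; the paper's avoids the congruence lemma at the cost of going up and back down in Witt length.

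For part (3), the two inductions are organized quite differently. The paper first extracts $g\in(f^{p^{e-1}},p)$ from the $p^{e-1}$-multiple of the hypothesis, writes $g=f^{p^{e-1}}g'+ph$, uses part (2) to isolate $p\lambda_e\bigl(\phi^{e-1}_*(hA)\bigr)\subseteq[f]W_e\omega_R$, passes through \cond{C2} and part (1) to $\lambda_{e-1}\bigl(\phi^{e-2}_*(u(\phi_*(hA)))\bigr)\subseteq[f]W_{e-1}\omega_R$, and invokes the induction hypothesis with the \emph{same} $f$ on the ideal $u(\phi_*(hA))$, finishing with the Fedder-type translation $u(\phi_*(hA))\subseteq I\Leftrightarrow h\in\phi(I)A$ and the congruence $\phi(f^{p^{e-2}})\equiv f^{p^{e-1}}\pmod{p^{e-1}}$. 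You instead apply $(V)^*$ from \cond{C3} to descend the hypothesis wholesale to $\lambda_{e-1}\bigl(\phi^{e-2}_*(\bar gA)\bigr)\subseteq[f^p]W_{e-1}\omega_R$, invoke the induction hypothesis with $f$ replaced by $f^p$ to get $g\in(f^{p^{e-1}},p^{e-1})$ immediately, and then upgrade $p^{e-1}$ to $p^e$ by isolating the $T^{e-1}$-piece $T^{e-1}\iota_1 u^{e-1}\bigl(\phi^{e-1}_*(kA)\bigr)$ and applying part (1) iterated plus ordinary Fedder over $R$. Both inductions are sound (the shift $f\mapsto f^p$ preserves the regular-sequence hypothesis and feeds cleanly into $(f^p)^{p^{e-2}}=f^{p^{e-1}}$). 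Your version makes the Frobenius twist coming from $V^*$ do the work that the paper extracts from the lifted-Fedder computation with $\phi(f^{p^{e-2}})$; as you note, without that twist the induction hypothesis would give only $(f^{p^{e-2}},p^{e-1})$ and the first summand would not be absorbed for free by \cond{C1}.
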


\begin{proof}
We consider the commutative diagram in which each horizontal sequence is exact;
\[
\begin{tikzcd}
0 \arrow[r] & W_e\omega_R \arrow[r, "T"] \arrow[d, "\cdot \text{[}f\text{]}"'] 
            & W_{e+1}\omega_R \arrow[r,"(V^{e})^*"] \arrow[d, "\cdot \text{[}f\text{]}"'] 
            & F^e_*\omega_R \arrow[r] \arrow[d, "\cdot F^e_*f"'] 
            & 0 \\
0 \arrow[r] & W_e\omega_R \arrow[r, "T"] 
            & W_{e+1}\omega_R \arrow[r,"(V^{e})^*"] 
            & F^e_*\omega_R \arrow[r] 
            & 0.
\end{tikzcd}
\]
Furthermore, since the map $\cdot F^e_*f$ is injective, so is multiplication by $[f]$ on $W_e\omega_R$ by inductive argument.
Therefore, we obtain the injectivity of the map
\[
W_e\omega_R/[f]W_e\omega_R \to W_{e+1}\omega_R/[f]W_{e+1}\omega_R,
\]
thus obtaining assertion (1).
Next, we prove assertion (2).
We have
\begin{align*}
    & T^s \circ \lambda_e \circ u^s\left(\phi^{e+s-1}_*(f^{p^{e+s-1}}a)\right) \\
    &= p^s \circ \lambda_{e+s}\left(\phi^{e+s-1}_*(f^{p^{e+s-1}}a)\right) &\text{by}\ \cond{C2} \\
    &= [f] \cdot p^s  \lambda_{e+s}\left(\phi^{e+s-1}_*a\right) &\text{by}\ \cond{C1},
\end{align*}
thus, this provides assertion (2).

Next, we prove assertion (3) by induction on $e$.
Since we have
\begin{align*}
    & p^{e-1} \lambda_e\left(\phi^{e-1}_*(gA)\right) 
    = u^{e-1}\left(\phi^{e-1}_*(gA)\right) \cdot \lambda_1(1)  &\text{by}\ \cond{C2},
\end{align*}
which is contained in $[f]W_e\omega_R$ by assumption, we obtain $g \in (f^{p^{e-1}},p)$, and in particular, Assertion (3) in the case of $e=1$.
We write $g=f^{p^{e-1}}g'+ph$.
By assertion (2), we have
\[
p\lambda_e\left(\phi^{e-1}_*(hA)\right) \subseteq [f] \cdot W_e\omega_R.
\]
By the equation
\begin{align*}
    p\lambda_e\left(\phi^{e-1}_*(hA)\right)=T \circ \lambda_{e-1} \circ u\left(\phi^{e-1}_*(hA)\right)
\end{align*}
and assertion (1), we obtain $u\left(\phi_*(hA)\right) \subseteq (f^{p^{e-2}},p^{e-1})$ by the induction hypothesis, thus $h$ is contained in $(\phi(f^{p^{e-2}}),p^{e-1})$.
Since we have $f^{p^{e-1}} \equiv \phi(f^{p^{e-2}}) \mod p^{e-1}$, we have $h \in (f^{p^{e-1}},p^{e-1})$, as desired.
\end{proof}

\begin{proposition}\label{prop:pushout condition}
We use \cref{notation:fedder}.
Let $n,e \geq 1$ be integers and $g \in A$.
Then $\psi^e_{n,g}$ is contained in $(Q^e_{R,n})^*$ if and only if $u^{e+r-1}\left(\phi^{e+r-1}_*g\right)$ is contained in $(p^{r})$ for all $n-1 \geq  r \geq 1$.
\end{proposition}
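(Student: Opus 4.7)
The plan is to use the universal property of the pushout to translate the condition on $\psi^e_{n,g}$ into a vanishing condition on an explicit composition, then apply the formulas in \cref{prop:psi^e_n is module hom} to reduce it to membership in $\ker \Psi^1_{n-1}$, where \cref{thm:structure of dual'}(3) already gives the answer. The base case $n=1$ is immediate: $Q^e_{R,1}=F^e_*R$, so every $\psi$ belongs to $(Q^e_{R,1})^*$, and the required range $n-1 \geq r \geq 1$ is empty; I therefore assume $n \geq 2$ throughout.

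First, I would observe that by the defining pushout diagram of $Q^e_{R,n}$, the element $\psi^e_{n,g} \in (F^e_*W_n(R))^*$ lies in $(Q^e_{R,n})^*$ exactly when the composition $\psi^e_{n,g} \circ F^e \colon W_n(R) \to W_n\omega_R$ factors through $Res^{n-1} \colon W_n(R) \twoheadrightarrow R$, i.e.\ vanishes on $\ker(Res^{n-1}) = V\, W_{n-1}(R)$. Since $R$ has characteristic $p$, the Witt-vector Frobenius commutes entrywise with the Verschiebung, so $F^e(V\beta) = V(F^e\beta)$ for every $\beta \in W_{n-1}(R)$. Applying \cref{prop:psi^e_n is module hom}(3) once and then iterating \cref{prop:psi^e_n is module hom}(4) a total of $e$ times (each application decreases the Frobenius exponent by one and advances the twist from $g$ to $u(\phi_*g)$), I arrive at
\[
\psi^e_{n,g}\bigl(F^e_*(F^e(V\beta))\bigr)
= T \circ \psi^{e+1}_{n-1,g}\bigl(F^{e+1}_*(F^e\beta)\bigr)
= T \circ \psi^{1}_{n-1,\,u^e(\phi^e_* g)}(F_*\beta).
\]

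Because $T \colon W_{n-1}\omega_R \to W_n\omega_R$ is injective (cf.\ \cref{prop:w dual}(3)), the vanishing of this expression for every $\beta$ is equivalent to $\psi^{1}_{n-1,\,u^e(\phi^e_*g)}$ being the zero map, i.e.\ $\phi^{n-1}_* u^e(\phi^e_* g) \in \ker \Psi^1_{n-1}$. Applying \cref{thm:structure of dual'}(3) with $e$ replaced by $1$ and $n$ by $n-1$, this is equivalent to
\[
u^{r-1}\bigl(\phi^{r-1}_* u^e(\phi^e_* g)\bigr) \in (p^r) \quad \text{for all } 1 \leq r \leq n-1.
\]
The composition rule $u^{r-1} \circ \phi^{r-1}_* u^e = u^{e+r-1}$ for iterated $u$ identifies the left-hand side with $u^{e+r-1}(\phi^{e+r-1}_* g)$, which is exactly the criterion in the statement.

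The main technical obstacle I anticipate is the careful bookkeeping in the iterated use of \cref{prop:psi^e_n is module hom}(4): I must track simultaneously the decreasing Frobenius exponent on the argument and the advance of the twist $g \mapsto u^k(\phi^k_* g)$, and then verify that iterated $u$ composes as $u^{r-1} \circ \phi^{r-1}_* u^e = u^{e+r-1}$ so that \cref{thm:structure of dual'}(3) translates precisely into the inclusions $u^{e+r-1}(\phi^{e+r-1}_* g) \in (p^r)$. Both are routine verifications once the indexing is handled carefully.
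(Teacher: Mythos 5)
Your proof is correct and follows essentially the same route as the paper: both characterize membership in $(Q^e_{R,n})^*$ as vanishing of $\psi^e_{n,g}\circ F^e$ on $VW_{n-1}(R)$, use \cref{prop:psi^e_n is module hom}(3),(4) together with injectivity of $T$ to translate this into $u^e(\phi^e_*g)\in\ker(\Psi^1_{n-1})$, and then invoke \cref{thm:structure of dual'}(3). The only organizational difference is that the paper first reduces to $e=1$ by precomposing with $F^{e-1}$ and then applies the one-step identity, whereas you chain the same two lemmas directly; this is a cosmetic reordering, not a different argument.
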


\begin{proof}
Since $\psi^e_{n,g}$ is contained in $(Q^e_{R,n})^*$ if and only if the homomorphism
\[
\psi \colon F_*W_n(R) \xrightarrow{F^{e-1}} F^e_*W_n(R) \xrightarrow{\psi^e_{n,g}} W_n\omega_R
\]
is contained in $(Q^1_{R,n})^*$, we may assume $e=1$ by \cref{prop:psi^e_n is module hom} (4).
Furthermore, the homomorphism $\psi^1_{n,g}$ is contained in $(Q^1_{R,n})^*$ if and only if $\psi^1_{n,g}\left(F_*(p\alpha)\right)=0$ for every $\alpha \in W_n(R)$.
Moreover, since we have
\[
\psi^1_{n,g}\left(F_*(p\alpha)\right)=T \circ \psi^1_{n-1,u\left(\phi_*g\right)}\left(\phi^{n-1}_*\alpha\right)
\]
by \cref{thm:structure of dual'} and $T$ is injective, $\psi^1_{n,g}$ is contained in $(Q^1_{R,n})^*$ if and only if $u\left(\phi_*g\right) \in \mathrm{ker}(\Psi^1_{n-1})$.
Therefore, the assertion follows from \cref{thm:structure of dual'} (3).
\end{proof}

\begin{proposition}\label{prop:including condition 2}
We use \cref{notation:fedder}.
Let $n,e \geq 1$ and $f,g \in A/p^n$.
We assume $f$ is a non-zero divisor.
Then $\psi^e_{n,g}\left(F^e_*W_n(fR)\right) \subseteq [f] W_n\omega_R$ if and only if $g$ admits a decomposition
\[
g=g_0+pg_1+\cdots +p^{n-1}g_{n-1}
\]
such that $u^r\left(\phi^r_*g_r\right) \in (f^{p^{e+n-r-1}-1})$ for every $n-1 \geq r \geq 0$.
\end{proposition}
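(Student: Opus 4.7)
The plan is to induct on $n$, proving both implications simultaneously. The base case $n=1$ reduces to the classical Fedder criterion: since $\lambda_1 \colon A/p \to \omega_R$ is an isomorphism, the condition $\psi^e_{1,g}(F^e_*[fa]) = \lambda_1(u^e(\phi^e_*(gfa))) \in [f]\omega_R$ for every $a \in R$ is equivalent to $u^e(\phi^e_*(gf \cdot R)) \subseteq fR$, and the fact that $u^e$ generates $\Hom_R(\phi^e_*R,R)$ as a $\phi^e_*R$-module equates this with $gf \in f^{p^e}R$, i.e.\ $g \in (f^{p^e-1})$.

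For the inductive step, I use the short exact sequence $0 \to W_{n-1}(fR) \xrightarrow{V} W_n(fR) \to fR \to 0$ (the quotient being the zeroth-component projection) to decompose the required containment into two pieces: the same inclusion for elements of the form $F^e_*V\alpha$ with $\alpha \in W_{n-1}(fR)$, and for Teichm\"uller lifts $F^e_*[fa]$ with $a \in R$. The $V$-piece is handled by \cref{prop:psi^e_n is module hom}~(3), which rewrites $\psi^e_{n,g}(F^e_*V\alpha) = T \circ \psi^{e+1}_{n-1,g}(F^{e+1}_*\alpha)$, together with \cref{lem:including condition}~(1), which shows that $T$ reflects lying in $[f]W_\bullet\omega_R$. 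Applying the inductive hypothesis with $n \leftarrow n-1$ and $e \leftarrow e+1$ yields a decomposition of $g \bmod p^{n-1}$ of the required form; the exponents match because $(e+1)+(n-1)-r-1 = e+n-r-1$.

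For the Teichm\"uller piece, I write $g = g_0 + pg_1 + \cdots + p^{n-2}g_{n-2} + p^{n-1}\tilde g$ using lifts of the $(n-1)$-level digits, invoke the sufficient direction at level $n$ (established in parallel) to absorb the $r\le n-2$ summands into $[f]W_n\omega_R$, and so reduce the problem to the assertion $\psi^e_{n, p^{n-1}\tilde g}(F^e_*[fa]) \in [f]W_n\omega_R$ for every $a$. Iterating \cref{thm:structure of dual'}~(1) gives $\Psi^e_n(\phi^{e+n-1}_*(p^{n-1}\tilde g)) = T^{n-1} \circ \Psi^e_1(\phi^e_* u^{n-1}(\phi^{n-1}_*\tilde g))$ (precomposed with the natural restriction $F^e_*W_n \to F^e_*A/p$), so another application of \cref{lem:including condition}~(1) combined with the base case forces $u^{n-1}(\phi^{n-1}_*\tilde g) \in (f^{p^e-1})$ modulo $p$. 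The sufficient direction runs term by term via the same iterated identity $\psi^e_{n, p^r g_r}(F^e_*\beta) = T^r(\psi^e_{n-r, u^r(\phi^r_*g_r)}(F^e_* Res^r\beta))$: since $Res^r\beta \in W_{n-r}(fR)$, the inductive hypothesis at level $n-r$ applied to the single-term decomposition $u^r(\phi^r_*g_r) \in (f^{p^{e+n-r-1}-1})$ (with all higher digits zero) places the inner expression in $[f]W_{n-r}\omega_R$, and $T^r$ then carries it into $[f]W_n\omega_R$.

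The main obstacle I anticipate is the bookkeeping when lifting the $(n-1)$-level decomposition to level $n$: the divisibility $u^r(\phi^r_*g'_r) \in (f^{p^{e+n-r-1}-1})$ produced by the inductive hypothesis is naturally valid only modulo $p^{n-1-r}$, while the level-$n$ statement demands it modulo $p^{n-r}$. I plan to resolve this by adjusting each lift $g'_r$ by an element of $p^{n-1-r}A$ (the surjectivity of $u^r$ lets one realize any desired correction in $p^{n-1-r}A/p^{n-r}$), compensating inside the top digit $\tilde g$ so that the sum in $A/p^n$ is unchanged. Beyond this, the remaining care needed is the matching of exponents across the induction and the interlocking of the sufficient and necessary arguments so that the simultaneous induction closes.
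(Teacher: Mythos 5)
Your overall plan is a genuinely different route from the paper's, but it has one concrete circularity that you will need to fix. The paper first proves, by a direct computation (not by induction), a claim that $g_0 \in (f^{p^{e+n-1}-1})$ already implies $\psi^e_{n,g_0}(F^e_*W_n(fR)) \subseteq [f]W_n\omega_R$; the key point there is \cref{thm:structure of dual'}~(4), which lets one pull $[f]$ out of $\Psi^e_n(\phi^{e+n-1}_*(f^{p^{e+n-1}}\cdot\!-))$. With that claim in hand the paper runs two independent inductions on $n$, each time killing the digit $g_0$ and then passing from $(n,e)$ to $(n-1,e)$ via $\psi^e_{n,pg'} = T\circ\psi^e_{n-1,u(\phi_* g')}\circ Res$. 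Your proposal instead splits $W_n(fR)$ as $V(W_{n-1}(fR))$ together with Teichm\"uller lifts and passes from $(n,e)$ to $(n-1,e+1)$ via \cref{prop:psi^e_n is module hom}~(3), which is a legitimate alternative and neatly explains the exponent bookkeeping. However, in your ``sufficient direction runs term by term'' step the iterated identity $\psi^e_{n,p^rg_r}(F^e_*\beta)=T^r(\psi^e_{n-r,u^r(\phi^r_*g_r)}(F^e_*Res^r\beta))$ degenerates to a tautology at $r=0$, and you then appeal to ``the inductive hypothesis at level $n-r=n$,'' which is precisely the statement you are in the middle of proving. This is a real gap: as written, the treatment of the $r=0$ summand $g_0\in(f^{p^{e+n-1}-1})$ is circular, and it is exactly that summand which your necessary direction needs to absorb before isolating $\tilde g$.

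The gap is fillable by supplying a separate, non-inductive argument for the $r=0$ term, either the paper's direct computation via \cref{thm:structure of dual'}~(4), or a small auxiliary induction using your $V$-decomposition together with (4) for the Teichm\"uller piece. Once that is in place, your interlocking (sufficient direction at level $n$ established from levels $<n$, then necessary direction at level $n$ using the level-$n$ sufficient direction) closes consistently. On the other point you flagged, the lifting of the level-$(n-1)$ divisibilities $u^r(\phi^r_*g_r)\in(f^{p^{e+n-r-1}-1})\bmod p^{n-1-r}$ to level $n$: your proposed fix is sound. Because $u$ is surjective and $A$-linear, $u^r(\phi^r_*(p^{n-1-r}A))=p^{n-1-r}A$, so any defect in $p^{n-1}A/p^nA$ can be killed by a correction $\epsilon_r\in p^{n-1-r}A$, and the accumulated change $\sum p^r\epsilon_r\in p^{n-1}A$ is absorbed into $\tilde g$. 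The paper sidesteps this bookkeeping entirely by applying $u$ to $g/p$ and invoking surjectivity of $u$ just once when reconstructing the digits (it adjusts $g_1$ by an element of $\ker u$ to force $h=g_1+pg_2+\cdots$), which is arguably tidier; you may find it easier to adopt that device in place of digit-by-digit corrections.
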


\begin{proof}
First, we prove the following assertion.
\begin{claim}\label{cl:inclusion cond}
If $g \in (f^{p^{e+n-1}-1})$, then $\psi^e_{n,g}\left(F^e_*W_n(fR)\right) \subseteq [f] W_n\omega_R$.
\end{claim}
\begin{proof}[Proof of Claim]
It is enough to show that
\[
\psi^e_{n,g}\left(F^e_*(V^r[fa])\right) \in [f]W_n\omega_R
\]
for every $a \in R$ and $r \geq 0$.
We take $a \in R$ and $n-1 \geq r \geq 0$, then we have
\[
\psi^e_{n,g}\left(F^e_*(V^r([fa]))\right)=T^r \circ \lambda_{n-r} \circ u^{e+r}\left(\phi^{e+n-1}_*(g(fa)^{p^{n-r-1}})\right)
\]
by \cref{prop:psi^e_n is module hom} (1).
Since $gf^{p^{n-r-1}} \in (f^{p^{e+n-1}})$, there exists $g'$ such that $gf^{p^{n-r-1}}=f^{p^{e+n-1}}g'$.
Furthermore, we have
\begin{align*}
   \lambda_{n-r} \circ u^{e+r}\left(\phi^{e+n-1}_*(g(fa)^{p^{n-r-1}})\right)
   &=\Psi^{e+r}_{n-r}\left(\phi^{e+n-1}_*(f^{p^{e+n-1}}g')\right)\left(F^e_*[a]\right) \\
   &=[f]\Psi^{e+r}_{n-r}\left(\phi^{e+n-1}_*g'\right)\left(F^e_*[a]\right), 
\end{align*}
where the last equation follows from Theorem \ref{thm:structure of dual'} (3).
Therefore, we obtain 
\[
\psi^e_{n,g}\left(F^e_*W_n(fR)\right) \subseteq [f] W_n\omega_R
\]
by \cref{lem:including condition} (1).
\end{proof}
\noindent
Next, we prove show the assertion by induction on $n$.
First, we assume that $g$ has a decomposition
\[
g=g_0+pg_1+\cdots+p^{n-1}g_{n-1}
\]
such that $u^r\left(\phi^r_*g_r\right) \in (f^{p^{e+n-r-1}-1})$.
Then $\psi^e_{n,g_0}\left(F^e_*W_n(fR)\right) \subseteq [f] \cdot W_n\omega_R$ by \cref{cl:inclusion cond}, and in particular, we may assume $g_0=0$ and $n \geq 2$.
Then $\psi^e_{n,g}=\psi^e_{n-1,u\left(\phi_*g'\right)}$ by \cref{thm:structure of dual'} (1), where
\[
g'=g_1+pg_2+\cdots +p^{n-2}g_{n-1}.
\]
If we set $g_i':=u(g_{i+1})$, then 
\[
u\left(\phi_*g'\right)=g_0'+pg_1'+\cdots +p^{n-2}g'_{n-2}
\]
and $u^r\left(\phi^r_*g'_r\right)=u^{r+1}\left(\phi^{e+1}_*g_{r+1}\right) \in (f^{p^{e+n-2-r}-1})$ for $n-2 \geq r \geq 0$.
By the induction hypothesis, we have
\[
\psi^e_{n,g}\left(F^e_*W_n(fR)\right)=T \circ \psi^e_{n-1,u\left(\phi_*g'\right)}\left(F^e_*W_n(fR)\right) \subseteq [f] \cdot W_n\omega_R,
\]
as desired.

Next, we assume $\psi^e_{n,g}\left(F^e_*W_n(fR)\right) \subseteq [f] \cdot W_n\omega_R$.
Then for every $a \in R$, we have
\[
\psi^e_{n,g}\left(F^e_*(V^{n-1}[fa])\right)=T^{n-1} \circ \lambda_1 \circ u^{e+n-1}\left(\phi^{e+n-1}_*(gfa)\right) \in [f] \cdot \omega_R.
\]
Therefore, we have $g \in (f^{p^{e+n-1}-1},p)$.
Thus, we obtain a decomposition $g=g_0+ph$ such that $g_0 \in (f^{p^{e+n-1}-1})$.
Since $\psi^e_{n,g_0}\left(F^e_*W_n(fR)\right) \subseteq [f] \cdot W_n\omega_R$ by \cref{cl:inclusion cond}, we may assume $g_0=0$.
Then we have $\psi^e_{n-1,u\left(\phi_*g'\right)}\left(F^e_*W_n(fR)\right) \subseteq [f] \cdot W_{n-1}\omega_R$ by Lemma \ref{lem:including condition}.
By the induction hypothesis, we have a decomposition
\[
u\left(\phi_*h\right)=g'_0+pg'_1+\cdots +p^{n-2}g'_{n-2}
\]
such that $u^r\left(\phi^r_*g_r'\right) \in (f^{p^{e+n+r-2}-1})$ for $n-2 \geq r \geq 0$.
Since $u$ is surjective, there exists $g_r$ such that $u\left(\phi_*g_{r}\right)=g_{r-1}'$ for every $n-1 \geq r \geq 1$ and
\[
g=pg_1+p^2g_2+\cdots +p^{n-1}g_{n-1}.
\]
In particular, $u^r\left(\phi^r_*g_r\right)=u^{r-1}\left(\phi^{r-1}_*g_{r-1}'\right) \in (f^{p^{e+n+r-1}-1})$ for $n-1 \geq r \geq 0$, as desired.
\end{proof}

\begin{theorem}\label{thm:structure of sigma'}\textup{(\cref{intro:Fedder-qf^es})}
We use \cref{notation:fedder}.
Let $n, e \geq 1$ be integers and $f \in A/p^n$ be a non-zero divisor.
Then, for $g \in A/p^n$, the homomorphism $\psi^e_{n,g}$ satisfies conditions (1),(2) in \cref{prop:qF^esplit-adj-inv-adj}  if and only if
\begin{itemize}
    \item[(D1)] $u^{e+r-1}\left(\phi^{e+n-1}_*g\right) \in (p^r)$ for $n-1 \geq r \geq 1$, and 
    \item[(D2)] $g$ admits a decomposition
    \[
    g=g_0+pg_1+\cdots +p^{n-1}g_{n-1}
    \]
    such that $u^r\left(\phi^r_*g_r\right) \in (f^{p^{e+n-r-1}-1})$ for $n-1 \geq r \geq 0$.
\end{itemize}
Moreover, $R/fR$ is $n$-quasi-$F^e$-split if and only if there exists  $g \in A/p^n$ satisfying conditions \cond{D1}, \cond{D2} and condition
\begin{itemize}
    \item[(D3)] $u^{e+n-2}\left(\phi^{e+n-2}_*g\right) \notin (\m^{[p]},p^n)$.  
\end{itemize}
\end{theorem}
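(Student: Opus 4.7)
The plan is to reduce to three auxiliary facts already proved in the excerpt: \cref{prop:qF^esplit-adj-inv-adj} (which translates $n$-quasi-$F^e$-splitting into the existence of a $\varphi$ with properties (1),(2),(3)), the surjectivity of $\Psi^e_n$ (\cref{thm:structure of dual'}(2)), and the two characterizations \cref{prop:pushout condition} and \cref{prop:including condition 2}. The first statement of the theorem is purely formal: condition (1) for $\varphi=\psi^e_{n,g}$---that it descend to the pushout $Q^e_{R,n}$---is the content of \cref{prop:pushout condition}, giving (D1); and condition (2)---that $\varphi(F^e_*W_n(fR)) \subseteq [f]W_n\omega_R$---is exactly the hypothesis of \cref{prop:including condition 2}, giving (D2).

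For the ``moreover'' assertion, I combine \cref{prop:qF^esplit-adj-inv-adj} with the surjectivity of $\Psi^e_n$: every $\varphi$ satisfying (1),(2),(3) is of the form $\psi^e_{n,g}$ for some $g \in A/p^n$. Combined with the first part, the task reduces to showing that, granted (D1) and (D2), condition (3)---i.e.\ $\psi^e_{n,g}(F^e_*1) \notin T^{n-1}(\m\cdot\omega_R)$---is equivalent to (D3).

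To analyze (3), I compute $\psi^e_{n,g}(F^e_*1)$ in closed form. By \cref{prop:description of trace} with $\alpha=1$, $\psi^e_{n,g}(F^e_*1) = \lambda_n(u^e(\phi^{e+n-1}_*g))$. Condition (D1) at $r=1$ gives $u^e(g) \in (p)$, so we may write $u^e(g) = p h_1$; property \cond{C2} of \cref{thm:F-inverse action'} then yields $\lambda_n(p h_1) = T(\lambda_{n-1}(u(h_1)))$. Condition (D1) at $r=2$ forces $u(h_1) \in (p)$, so $u(h_1)=ph_2$, and \cond{C2} applies again. Iterating $n-1$ times produces $\psi^e_{n,g}(F^e_*1) = T^{n-1}(\lambda_1(u(h_{n-1})))$ with $h_{n-1} \in A/p$ and $u^{e+n-2}(g) = p^{n-1} h_{n-1}$ in $A/p^n$.

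Finally, injectivity of $T^{n-1}$ and the isomorphism $\lambda_1\colon A/p \xrightarrow{\sim} \omega_R$ reduce condition (3) to $u(h_{n-1}) \notin \m\cdot R$; a Fedder-style argument for the generator $u$ of $\Hom_R(F_*R,R)$ rephrases this as $h_{n-1} \notin \m^{[p]}\cdot R$. Unwinding via the relation $u^{e+n-2}(g) = p^{n-1}h_{n-1}$ in $A/p^n$---through a colon-ideal computation exploiting that $x_1^p,\ldots,x_d^p,p$ is a regular sequence in $A$---identifies this with $u^{e+n-2}(g) \notin (\m^{[p]}, p^n)$, which is (D3). The main technical obstacle lies in Step 3: one must carefully track the pushforward levels $\phi^k_*A/p^{n-r}$ and verify at each iteration that (D1) yields $u(h_r) \in (p)$ at the correct torsion level, then cleanly execute the regular-sequence translation in the last reduction. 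I would organize this step by induction on $n$, handling $n=1$ directly (where (D1) is vacuous and the assertion reduces to the classical Fedder criterion) and then using the exact sequence of \cond{C3} from \cref{thm:F-inverse action'} to peel off one level at a time.
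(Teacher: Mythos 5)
Your reduction of the first assertion to \cref{prop:pushout condition} and \cref{prop:including condition 2} matches the paper exactly, as does your iterated use of \cond{C2} to compute
\[
\psi^e_{n,g}\bigl(F^e_*1\bigr)=T^{n-1}\circ\lambda_1\circ u\bigl(\phi_*h_{n-1}\bigr),\qquad u^{e+n-2}\bigl(\phi^{e+n-2}_*g\bigr)=p^{n-1}h_{n-1}.
\]
The gap is in your final step, and only in the ``if'' direction. You assert that the condition $u(\phi_*h_{n-1})\notin\m$ \emph{rephrases as} $h_{n-1}\notin\m^{[p]}$. This equivalence is false as a pointwise statement: $u$ is the dual-basis projection onto $\phi_*(x_1^{p-1}\cdots x_d^{p-1})$, so for example $h=x_1x_2^{p-1}\cdots x_d^{p-1}$ lies outside $\m^{[p]}$ yet $u(\phi_*h)=0\in\m$. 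The correct Fedder-style statement is that $h\notin\m^{[p]}$ is equivalent to $u(\phi_*(hR))=R$, i.e.\ to the \emph{existence of a multiplier} $a$ with $u(\phi_*(ah))\notin\m$. Only the forward implication $u(\phi_*h)\notin\m\Rightarrow h\notin\m^{[p]}$ holds, which is why the ``only if'' part of your argument (and of the paper's) goes through. For the ``if'' part you cannot conclude that $\psi^e_{n,g}$ itself satisfies condition (3) of \cref{prop:qF^esplit-adj-inv-adj}; you must exhibit \emph{some} $\psi$ that does. The paper handles this by choosing $a$ with $u(\phi_*(ah_{n-1}))\notin\m$ and replacing $g$ by $\phi^{e+n-2}(a)\,g$: one checks that multiplying by $\phi^{e+n-2}(a)$ preserves \cond{D1} and \cond{D2} (since $u^{e+r-1}(\phi^{e+r-1}_*(\phi^{e+n-2}(a)g))=\phi^{n-1-r}(a)\,u^{e+r-1}(\phi^{e+r-1}_*g)$, and likewise for each $g_r$), while the resulting $\psi^e_{n,\phi^{e+n-2}(a)g}$ now evaluates at $F^e_*1$ to a unit multiple outside $\m\cdot\omega_R$. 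Your colon-ideal reduction from $h_{n-1}\notin\m^{[p]}$ in $A/p$ to $u^{e+n-2}(\phi^{e+n-2}_*g)\notin(\m^{[p]},p^n)$ in $A/p^n$ is fine; the missing ingredient is precisely the insertion of the multiplier $a$.
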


\begin{proof}
By Proposition \ref{prop:pushout condition} and  \ref{prop:including condition 2}, the homomorphism $\psi^e_{n,g}$ satisfies conditions (1),(2) in \cref{prop:qF^esplit-adj-inv-adj}  if and only if conditions \cond{D1},\cond{D2} are satisfied.

Next, we prove the the remaining assertion.
First, we assume $R/fR$ is $n$-quasi-$F^e$-split, so there exists $\psi \in (F^e_*W_n(R))^*$ such that $\psi$ satisfies conditions (1)--(3) in \cref{prop:qF^esplit-adj-inv-adj}.
By \cref{thm:structure of dual'}, there exists $g \in A/p^n$ such that $\psi=\psi^e_{n,g}$, thus $g$ satisfies conditions \cond{D1},\cond{D2}.
Furthermore, we have
\begin{align*}
    \psi^e_{n,g}\left(F^e_*1\right)
    &=\lambda_n \circ u^e\left(\phi^{e+n-1}_*g\right) = T \circ \lambda_{n-1} \circ u\left(\phi^{n-1}_*(u^e\left(\phi^e_*g\right)/p)\right) \\
    &= \cdots = T^{n-1} \circ \lambda_1 \circ u\left(\phi_*(u^{e+n-2}\left(\phi^{e+n-2}_*g\right)/p^{n-1})\right)
\end{align*}
by \cref{thm:F-inverse action'} \cond{C2} and condition \cond{D1}.
By condition (3) in \cref{prop:qF^esplit-adj-inv-adj}, we have 
\[
\lambda_1 \circ u\left(\phi_*(u^{e+n-2}\left(\phi^{e+n-2}_*g\right)/p^{n-1})\right) \notin \m \cdot \omega_R
\]
Since $\lambda_1 \colon R \to \omega_R$ is an isomorphism,  the element $u^{e+n-2}\left(\phi^{e+n-2}_*g\right)$ is not contained in $(\m^{[p]},p^n)$, as desired.

Next, we prove the ``if'' part, thus we assume that there exists $g \in A/p^n$ satisfying conditions \cond{D1}--\cond{D3}.
By the argument in the proof of the ``only if'' part, we have
\[
\psi^e_{n,g}\left(F^e_*1\right)=\lambda_1 \circ u\left(\phi_*(u^{e+n-2}\left(\phi^{e+n-2}_*g\right)/p^{n-1})\right).
\]
By condition \cond{D3}, there exists $a \in A/p^n$ such that 
\[
\lambda_1 \circ u\left(\phi_*(au^{e+n-2}\left(\phi^{e+n-2}_*g\right)/p^{n-1})\right) \notin \m \cdot \omega_R.
\]
Furthermore, since we have
\[
\psi^e_{n,\phi^{e+n-2}\left(a\right)g}\left(F^e_*1\right)=\lambda_1 \circ u\left(\phi_*(au^{e+n-2}\left(\phi^{e+n-2}_*g\right)/p^{n-2})\right),
\]
the homomorphism $\psi:=\psi^e_{n,\phi^{e+n-2}\left(a\right)g}$ satisfies condition (3) in \cref{prop:qF^esplit-adj-inv-adj}.
Furthermore, since $g$ satisfies conditions \cond{D1},\cond{D2}, so does $\phi^{e+n-2}\left(a\right)g$.
By the first assertion, the homomorphism $\psi$ satisfies conditions (1),(2) in \cref{prop:qF^esplit-adj-inv-adj}.
Therefore, by \cref{prop:qF^esplit-adj-inv-adj}, the ring $R/fR$ is $n$-quasi-$F^e$-split, as desired.
\end{proof}

\begin{lemma}\label{e=1 case corr}
We use \cref{notation:fedder}.
Let $n \geq 1$ be an integer and $f,g \in A/p^n$ such that $f$ is a non-zero divisor.
We assume that $g$ satisfies condition \cond{D1} in \cref{thm:structure of sigma'} for $e=1$.
Let $g_r:=u^r\left(\phi^r_*g\right)/p^r$ for $n-1 \geq r \geq 0$.
Then $\psi^1_{n,g}$ is corresponding to $\psi_{(g_{n-1},g_{n-2},\ldots,g_0)}$ by the isomorphism
\[
\Hom_{W_n(R)}(Q^1_{R,n},W_n\omega_R) \simeq \Hom_{W_n(R)}(Q^1_{R,n},R),
\]
where $\psi_{(g_{n-1},g_{n-2},\ldots,g_0)}$ is as in \cite{kty}*{Lemma~4.3}.
\end{lemma}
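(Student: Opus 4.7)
The plan is to identify $\psi^1_{n,g}$ under the duality $W_n\omega_R \simeq W_n(R)$ coming from \cref{thm:F-inverse action'}\cond{C3} and to check that it corresponds to the Witt-vector datum $(g_{n-1}, \ldots, g_0)$. Since $\psi^1_{n,g}$ is $W_n(R)$-linear (it factors as multiplication by $\phi^{n-1}_*g \in \phi^{n-1}_*A/p^n$ followed by $\lambda_n$), it is determined by its value on $F_*1$, namely $\psi^1_{n,g}(F_*1) = \lambda_n(\phi^{n-1}_*g)$ by \cref{prop:description of trace}. The task thus reduces to matching this element of $W_n\omega_R$ to the tuple $(g_{n-1}, \ldots, g_0) \in W_n(R)$ viewed inside $W_n\omega_R$ via the fixed isomorphism $\iota_1$.

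To carry out this identification, I would use \cref{thm:F-inverse action'}\cond{C3}, which provides the exact sequence
\[
0 \to \omega_R \xrightarrow{T^{n-1}} W_n\omega_R \xrightarrow{V^*} F_*W_{n-1}\omega_R \to 0,
\]
relating the $T$-filtration on $W_n\omega_R$ to the $V$-filtration on $W_n(R)$ via the $p$-adic filtration on $\phi^{n-1}_*A/p^n$. Proceeding by induction on $n$, the projection $(V^*)^k(\lambda_n(\phi^{n-1}_*g))$ equals $\lambda_{n-k}(\phi^{n-1}_*(g \bmod p^{n-k}))$, whose image in $\omega_R$ under further projection is $\lambda_1(g_{n-k-1})$ by \cref{thm:F-inverse action'}\cond{C2} and hypothesis \cond{D1} (the latter guarantees that the quotient $u^{n-k-1}(\phi^{n-k-1}_*g)/p^{n-k-1}$ is well-defined and equals $g_{n-k-1}$). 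Under the identification $\omega_R \simeq R$ from $\iota_1$, these values are the successive Witt-vector components of the image in $W_n(R)$, assembling to exactly $(g_{n-1}, \ldots, g_0)$.

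The main obstacle is verifying that this layered decomposition on the dualizing side matches the Witt-vector operations used to define $\psi_{(g_{n-1}, \ldots, g_0)}$ in \cite{kty}*{Lemma~4.3}. The key subtlety lies in how Teichmüller and Verschiebung symbols interact with the Grothendieck trace $T$ through the isomorphism of \cond{C3}, which ultimately reduces to tracking the choice of $\iota$ consistently between the two formulations. Once this compatibility is established, the $W_n(R)$-linearity of both sides reduces the check to the single equality $\psi^1_{n,g}(F_*1) = \lambda_n(\phi^{n-1}_*g)$ matching the image of $(g_{n-1}, \ldots, g_0)$ under the left-multiplication action on $W_n\omega_R$, which follows from the layer-by-layer computation above.
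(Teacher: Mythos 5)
Your proposal rests on a false premise. You assert that because $\psi^1_{n,g}$ is $W_n(R)$-linear "it is determined by its value on $F_*1$." But $F_*W_n(R)$ is not a cyclic $W_n(R)$-module: as a $W_n(R)$-module it is generated by the elements $F_*(V^r[a])$ for $0\le r\le n-1$ and $a\in R$ (and for $R$ regular of dimension $d$ one has $F_*R$ free of rank $p^d$ over $R$), so two $W_n(R)$-linear maps out of $F_*W_n(R)$ can agree on $F_*1$ without being equal. The remark that $\psi^1_{n,g}$ "factors as multiplication by $\phi^{n-1}_*g$ followed by $\lambda_n$" confuses the construction of $\Psi^e_n$ with the map it produces; by \cref{prop:description of trace} the homomorphism actually factors through the ghost map $\varphi_{n-1}$ and then $u^e(\phi^e_*(g\cdot-))$ and $\lambda_n$, and in particular $\psi^1_{n,g}(F_*1)=\lambda_n\circ u(\phi^n_*g)$, not $\lambda_n(\phi^{n-1}_*g)$. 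The latter is the element of $F_*W_n\omega_R$ to which $\psi^1_{n,g}$ corresponds under the Frobenius trace adjunction; that is a different thing from the value at $F_*1$.

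Because of this, the identification you sketch via \cond{C3} and an induction on the $V^*$-filtration never touches the genuine content of the lemma. The paper's proof instead computes $\psi^1_{n,g}\bigl(F_*(V^r[a])\bigr)$ directly for all generators $V^r[a]$. The key technical input is the expansion
\[
a^{p^{n-r-1}}=\sum_{s=0}^{n-r-1}p^s\,\phi^{\,n-r-1-s}\bigl(\Delta_s(a)\bigr)
\]
(identity (3.1) in the proof of \cite{Yoshikawa25}*{Theorem~3.6}), which is exactly what produces the $\Delta_s$-operators appearing in the definition of $\psi_{(g_{n-1},\dots,g_0)}$ from \cite{kty}*{Lemma~4.3}. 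One then repeatedly invokes \cond{C2} to pull out powers of $p$ as factors of $T$ and to replace $g$ by the $g_r$, and matches the resulting sum with the \cite{kty} formula term by term. You flag the comparison with \cite{kty}'s Witt-vector formula as the "main obstacle" and then defer it, but that comparison — carried out on all the generators $F_*(V^r[a])$, not just on $F_*1$ — is the entire lemma, so the deferral is fatal.
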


\begin{proof}
Let 
\[
\psi:=\psi_{(g_{n-1},g_{n-2},\ldots,g_0)}.
\]
For  $a \in R$ and $n-1 \geq r \geq 0$,  we have
\begin{align*}
    \psi^1_{n,g}\left(F_*(V^r[a])\right)
    &\overset{}{=}p^r \lambda_{n} \circ u\left(\phi^{n}_*(ga^{p^{n-r-1}})\right) \\
    &\eqtag{1} \sum_{s=0}^{n-r-1} p^{r+s} \lambda_{n} \circ u\left(\phi^n_*(g\phi^{n-r-1-s}\left(\Delta_{s}(a)\right))\right) \\ 
    &\eqtag{2} \sum_{s=0}^{n-r-1}  T^{n-1} \circ \lambda_{1} \circ u^{r+s+1}\left(\phi^{r+s-1}_*(g_{n-r-s-1}\Delta_{s}(a))\right)  \\
    &\eqtag{3} T^{n-1} \circ \lambda_1 \circ \psi\left(F_*(V^r[a])\right), 
\end{align*}
where $(\star_1)$ follows from (3.1) in the proof of \cite{Yoshikawa25}*{Theorem~3.6},
$(\star_2)$ follows from the following computation
\begin{align*}
    p^{r+s} \lambda_{n} \circ u\left(\phi^n_*(g\phi^{n-r-1-s}\left(\Delta_{s}(a)\right))\right)
    &= p^{r+s}T \circ \lambda_{n-1} \circ u\left(\phi^{n-1}_*(g_1\phi^{n-r-2-s}\left(\Delta_s(a)\right))\right) \\
    &= \cdots \\
    &= p^{r+s}T^{n-r-1-s} \circ \lambda_{r+1+s} \circ u\left(\phi^{r+1+s}_*(g_{n-r-s-1}\Delta_s(a))\right) \\
    &= T^{n-1} \circ \lambda_{1} \circ u^{r+s+1}\left(\phi^{r+s+1}_*(g_{n-r-s-1}\Delta_s(a))\right),
\end{align*}
and $(\star_3)$ follows from \cite{kty}*{Lemma~4.3}, as desired.
\end{proof}

\begin{lemma}\label{lem:e=1_case}
We use \cref{notation:fedder}.
Let $n \geq 1$ be an integer and $f,g \in A/p^n$ such that $f$ is a non-zero divisor.
We assume $g$ satisfies condition \cond{D1}--\cond{D3} in \cref{thm:structure of sigma'} for $e=1$.
Then there exists $h_0,\ldots,h_{n-1} \in A/p^n$ such that 
\begin{enumerate}
    \item $h_0 \in (f^{p-1},p)$ and $g-f^{p^n-p}h_0 \in (p)$,
    \item $u\left(\phi_*h_i\right) \in (p)$ and $h_{i+1}-u\left(\phi_*(\Delta_1(f^{p-1})h_i)\right) \in (f^{p-1},p)$ for every $n-2 \geq i \geq 0$, and
    \item $h_{n-1} \notin \m^{[p]}$.
\end{enumerate}
\end{lemma}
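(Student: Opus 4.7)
I construct $h_0,h_1,\dots,h_{n-1}$ inductively. The initial element $h_0$ is read off from the decomposition supplied by \cond{D2}. The recursion in (2) then pins down each $h_{i+1}$ up to an element of $(f^{p-1},p)$, and this slack is precisely what is needed to simultaneously enforce the divisibility $u(\phi_*h_{i+1})\in (p)$ for the next induction step and to arrange the non-vanishing (3) at the final stage via \cond{D3}. Morally, the construction is the translation of the tuple $(b_{n-1},\dots,b_0)$ with $b_r:=u^r(\phi_*^r g)/p^r$ (which is well-defined by \cond{D1} and appears in \cref{e=1 case corr}) into the iterative data used in the Fedder-type criterion of \cite{kty}.

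\textbf{Step 1 (Base).} By \cond{D2} with $r=0$ (and $e=1$), the leading component $g_0$ of the decomposition lies in $(f^{p^n-1})$. Since $f$ is a non-zero divisor in $A/p^n$, write $g_0=f^{p^n-1}a_0$ and set $h_0:=f^{p-1}a_0$. Then $h_0\in (f^{p-1})\subseteq (f^{p-1},p)$, and
\[
g-f^{p^n-p}h_0 \;=\; g-g_0 \;=\; pg_1+p^2g_2+\cdots+p^{n-1}g_{n-1}\;\in\;(p),
\]
which is (1).

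\textbf{Step 2 (Recursive step and divisibility).} For $0\le i\le n-2$, assuming $h_i$ has been constructed with $u(\phi_*h_i)\in (p)$, take $h_{i+1}$ to be any lift of $u(\phi_*(\Delta_1(f^{p-1})h_i))$ modulo $(f^{p-1},p)$; every such choice satisfies the second half of (2). The first half, $u(\phi_*h_{i+1})\in (p)$, is proved in parallel by comparing $p\,h_{i+1}$ with $u(\phi_*h_i)$. The main input is the identity $x^p=\phi(x)+p\Delta_1(x)$ (implicit in the computation in the proof of \cref{e=1 case corr}), together with the $A$-linearity of $u$ and the push-pull formula $\phi(r)\cdot\phi_*(x)=\phi_*(\phi(r)x)$. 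These let one pull out Frobenius-lifted factors and convert the divisibility $u^{i+1}(\phi_*^{i+1}g)\in(p^{i+1})$ from \cond{D1} into the desired statement about $h_{i+1}$, up to terms in $(f^{p-1},p)$ that can be absorbed by modifying the lift.

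\textbf{Step 3 (Non-vanishing).} Carrying the same inductive comparison through $i=n-2$ shows that $h_{n-1}$ agrees, modulo $(\mathfrak{m}^{[p]},p)$, with $u^{n-1}(\phi_*^{n-1}g)/p^{n-1}$ (up to unit factors coming from iterated applications of $\Delta_1$ to $f^{p-1}$). Since \cond{D3} states that $u^{n-1}(\phi_*^{n-1}g)\notin(\mathfrak{m}^{[p]},p^n)$, this forces $h_{n-1}\notin \mathfrak{m}^{[p]}$, giving (3).

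\textbf{Main obstacle.} The hardest part is Step 2's bookkeeping. The naive expansion of $f^{p^{n-i}-p}$ as $\phi(f^{p^{n-i-1}-1})+p\Delta_1(f^{p^{n-i-1}-1})$ produces a $\Delta_1$ whose exponent depends on $i$, while the lemma demands the uniform form $\Delta_1(f^{p-1})$. Reconciling these by iterating the identity and absorbing the discrepancies into the ideal $(f^{p-1},p)$—precisely the slack allowed by (2)—while maintaining $u(\phi_*h_i)\in(p)$ and the correct reduction modulo $(\mathfrak{m}^{[p]},p)$ needed in Step 3 is the main technical hurdle.
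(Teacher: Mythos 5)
Your plan deviates from the paper's proof in a genuine way: the paper translates the data into the setting of \cite{kty} via \cref{e=1 case corr} (producing the $g_r:=u^r(\phi_*^rg)/p^r$), and then invokes \cite{kty}*{Theorem~4.8, Lemma~4.9, Lemma~4.10} to produce $h_0,\ldots,h_{n-1}$ and establish conditions (1)--(3). You instead attempt a direct inductive construction. Unfortunately, as written, this is a sketch with concrete gaps rather than a proof.

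The first gap is already in Step 1. Setting $h_0:=f^{p-1}a_0$ from the decomposition in \cond{D2} gives $h_0\in(f^{p-1})$ and $g-f^{p^n-p}h_0\in(p)$, so condition (1) holds. But condition (2) at $i=0$ also requires $u(\phi_*h_0)\in(p)$, and $u(\phi_*(f^{p-1}a_0))$ is not in $(p)$ for a generic $a_0$ — one must choose $h_0$ inside $(f^{p-1})\cap u^{-1}(pA)$ (compare the intersections $I^e_n\cap u^{-1}(pA)$ in \cref{nec-suff-cond}), and you neither verify this nor explain how to arrange it. Without this, the base case of your induction is unavailable. The second gap is the ``absorption'' claim in Step 2 and the ``Main obstacle'' paragraph. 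You assert that discrepancies can be absorbed by modifying the lift $h_{i+1}$ within $(f^{p-1},p)$ while preserving $u(\phi_*h_{i+1})\in(p)$; but adding $f^{p-1}b$ to $h_{i+1}$ changes $u(\phi_*h_{i+1})$ by $u(\phi_*(f^{p-1}b))$, which is not in $(p)$ in general, so the divisibility is not preserved under the modification you describe. This is exactly the delicate bookkeeping that \cite{kty}*{Lemma~4.9} handles and that you name but do not carry out. Step 3 inherits the same problem: the claim that $h_{n-1}\equiv u^{n-1}(\phi_*^{n-1}g)/p^{n-1}$ modulo $(\m^{[p]},p)$ ``up to unit factors coming from iterated $\Delta_1$'' has no content as stated — $\Delta_1$ is not multiplicative and does not produce ``unit factors'' — and the paper instead cites the proof of \cite{kty}*{Lemma~4.10} for precisely this point. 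Finally, note a notational conflation: the $g_r$ in \cond{D2} (a chosen decomposition of $g$) are not the $g_r:=u^r(\phi_*^rg)/p^r$ from \cref{e=1 case corr}; Step 1 uses the former and Step 3 silently switches to the latter, which obscures where \cond{D1} and \cond{D3} actually enter.
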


\begin{proof}
We take $g_0,\ldots,g_{n-1}$ as in \cref{e=1 case corr}.
By \cite{kty}*{Theorem~4.8 and Lemma~4.9}, there exists $h_0,\ldots,h_{n-1}$ such that $h_0 \in (f^{p-1},p)$, $f^{p^i-p}h_{n-1-i}=g_i$ for every integer $n-1 \geq i \geq 0$, $u\left(\phi_*h_{n-1-i}\right) \in (p)$ for $i \geq 2$, and
\[
h_{n-1-i}-u\left(\phi_*\Delta_1(f^{p-1}h_{n-2-i})\right) \in (f^{p-1},p)
\]
for $n-1 \geq i \geq 0$.
Furthermore, by the proof of \cite{kty}*{Lemma~4.10}, we have $h_{n-1} \notin \m^{[p]}$, as desired.
\end{proof}

\begin{corollary}\label{nec-suff-cond}\textup{(\cref{intro:nec-suff-cond})}
We use \cref{notation:fedder}.
Let $n, e \geq 1$ be integers and $f \in A/p^n$ be a non-zero divisor.
\begin{enumerate}
    \item If there exists $g \in f^{p^{e+n-1}-1}A/p^n$ satisfying conditions \cond{D1} and \cond{D3}, then $R/fR$ is $n$-quasi-$F^e$-split.
    \item We define a sequence of ideals $\{I^e_n\}$ of $A$ inductively as follows.  
    The ideal $I^e_1$ is defined by  
    \[
    I^e_1 := f^{p-1} u^{e-1}\left(\phi^{e-1}_*(f^{p^{e-1} - 1} A)\right),
    \]  
    and for each $n \geq 1$, we define  
    \[
    I^e_{n+1} := u\left(\phi_*( \Delta_1(f^{p-1}) ( I^e_n \cap u^{-1}(pA) )) \right) + f^{p-1}A.
    \]
    If $R/fR$ is $n$-quasi-$F^e$-split, then $I^e_n \nsubseteq (\m^{[p]},p)$.
    \item We define a sequence of ideals $\{I'_n\}$ of $A$ inductively as follows.  
    The ideal $I'_1$ is defined by  
    \[
    I'_1 := f^{p-1} \mathfrak{m} ,
    \]  
    and for each $n \geq 1$, we define  
    \[
    I'_{n+1} := u\left(\phi_*( \Delta_1(f^{p-1}) ( I'_n \cap u^{-1}(pA) )) \right) + f^{p-1}A.
    \]
    If $R/fR$ is not $F$-pure and $I'_n \subseteq (\m^{[p]},p)$, then $R/fR$ is not $n$-quasi-$F^2$-split.
\end{enumerate}
\end{corollary}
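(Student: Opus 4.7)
The plan is to handle the three parts in turn: (1) follows immediately from \cref{thm:structure of sigma'}, (2) is the substantive step and requires extending \cref{lem:e=1_case} to general $e$, and (3) follows from (2) by a monotonicity comparison of the defining ideals.

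For (1), I will take the trivial decomposition $g = g_0$ with $g_r = 0$ for $r \geq 1$: condition \cond{D2} is immediate since $g_0 = g \in (f^{p^{e+n-1}-1})$ by hypothesis, and \cref{thm:structure of sigma'} then yields that $R/fR$ is $n$-quasi-$F^e$-split.

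For (2), I will proceed by induction on $n$. The base case $n = 1$ follows from \cref{thm:structure of sigma'}, which produces $g \in (f^{p^e-1})$ with $u^{e-1}(\phi^{e-1}_*g) \notin (\mathfrak{m}^{[p]}, p)$; writing $g = f^{p^e-1}h$ and using the congruence $f^{p^e-1} \equiv \phi^{e-1}(f^{p-1}) \cdot f^{p^{e-1}-1} \pmod{p}$ together with the $\phi^{e-1}$-linearity of $u^{e-1}$, I will obtain $u^{e-1}(\phi^{e-1}_*g) \equiv f^{p-1} \cdot u^{e-1}(\phi^{e-1}_*(f^{p^{e-1}-1}h)) \pmod{p}$, placing this element in $I^e_1 + pA$ but outside $(\mathfrak{m}^{[p]}, p)$. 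For the inductive step, I will take $g$ satisfying \cond{D1}--\cond{D3} and set $\tilde g := u^{e-1}(\phi^{e-1}_*g)$. Iterating the $u$-maps shows $\tilde g$ satisfies \cond{D1} and \cond{D3} at $e=1$. For \cond{D2} at $e=1$, I will use \cref{prop:including condition 2} together with the identity $\psi^1_{n,\tilde g} = \psi^e_{n,g} \circ F^{e-1}$ from \cref{prop:psi^e_n is module hom}~(4) and the inclusion $F^{e-1}(W_n(fR)) \subseteq W_n(fR)$. Then \cref{lem:e=1_case} applied to $\tilde g$ produces $h_0, \ldots, h_{n-1}$ with $h_{n-1} \notin \mathfrak{m}^{[p]}$.

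The refined starting condition $h_0 \in I^e_1 + pA$ comes from the base-case computation applied to $g_0 \in (f^{p^{e+n-1}-1})$: that computation yields $\tilde g \equiv f^{p^n-1} j \pmod{p}$ for some $j \in u^{e-1}(\phi^{e-1}_*(f^{p^{e-1}-1}A))$, and the defining relation $\tilde g \equiv f^{p^n-p} h_0 \pmod{p}$ combined with $f$ being a non-zero-divisor forces $h_0 \equiv f^{p-1} j \pmod{p}$. I will then verify by induction on $i$ that $h_i \in I^e_{i+1} + pA$: writing $h_i = h'_i + p h''_i$ with $h'_i \in I^e_{i+1}$, the conditions $u(\phi_* h_i) \in pA$ and $u(\phi_*(pA)) \subseteq pA$ force $h'_i \in u^{-1}(pA)$, so the recursion $h_{i+1} \equiv u(\phi_*(\Delta_1(f^{p-1}) h'_i)) \pmod{(f^{p-1}, p)}$ lands $h_{i+1}$ in $I^e_{i+2} + pA$. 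Hence $h_{n-1} \in I^e_n + pA$ and $h_{n-1} \notin (\mathfrak{m}^{[p]}, p)$, giving $I^e_n \not\subseteq (\mathfrak{m}^{[p]}, p)$.

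For (3), I will show $I^2_n \subseteq I'_n$ by induction on $n$. By Fedder's criterion, $R/fR$ being not $F$-pure is equivalent to $u(\phi_*(f^{p-1}A)) \subseteq \mathfrak{m}$, which gives $I^2_1 = f^{p-1} u(\phi_*(f^{p-1}A)) \subseteq f^{p-1} \mathfrak{m} = I'_1$; the inductive step is the monotonicity of the operator $J \mapsto u(\phi_*(\Delta_1(f^{p-1})(J \cap u^{-1}(pA)))) + f^{p-1} A$. Consequently $I'_n \subseteq (\mathfrak{m}^{[p]}, p)$ forces $I^2_n \subseteq (\mathfrak{m}^{[p]}, p)$, and the contrapositive of (2) yields that $R/fR$ is not $n$-quasi-$F^2$-split. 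The main obstacle is verifying \cond{D2} for $\tilde g$ at $e=1$ and extracting the refined condition $h_0 \in I^e_1 + pA$ in part (2); once these are in hand, the remaining bookkeeping through the recursive definition of $I^e_n$ is routine.
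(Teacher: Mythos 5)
Your proposal is correct and follows essentially the same route as the paper: reduce to the $e=1$ case by passing from $g$ to $\tilde g := u^{e-1}(\phi^{e-1}_*g)$, invoke \cref{lem:e=1_case} (which rests on \cite{kty}) to produce the chain $h_0,\ldots,h_{n-1}$, track these into the ideals $I^e_{i+1}$, and close (3) by the monotonicity comparison $I^2_m\subseteq I'_m$ via Fedder's criterion. Two small notes. First, the ``induction on $n$'' framing in your part (2) is a misnomer: your ``inductive step'' does not invoke an $(n-1)$-case hypothesis but rather applies \cref{lem:e=1_case} directly for the given $n$; what you are actually inducting on is the index $i$ when tracking $h_i\in I^e_{i+1}+pA$, which is also exactly what the paper does. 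Second, your explicit verification that $\tilde g$ satisfies \cond{D1}--\cond{D3} at $e=1$ — using $u^{r}(\phi^r_*\tilde g)=u^{e+r-1}(\phi^{e+r-1}_*g)$ for \cond{D1}/\cond{D3}, and the identity $\psi^1_{n,\tilde g}=\psi^e_{n,g}\circ F^{e-1}$ from \cref{prop:psi^e_n is module hom}(4) together with $F^{e-1}(W_n(fR))\subseteq W_n(fR)$ and \cref{prop:including condition 2} for \cond{D2} — is a welcome expansion; the paper asserts this step somewhat tersely (via the unlabelled notation $\Sigma^1_n(f)$), and your spelled-out argument is the right way to justify the hypothesis of \cref{lem:e=1_case}.
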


\begin{proof}
Assertion (1) follows from \cref{thm:structure of sigma'}.
We prove assertion (2).
Since $R/fR$ is $n$-quasi-$F^e$-split, there exists $g \in A/p^n$ satisfying conditions \cond{D1}--\cond{D3} in \cref{thm:structure of sigma'}.
By condition \cond{D2}, there exists $g_0 \in (f^{p^{e+n-1}})$ such that $g \equiv g_0 \mod (p)$.
By \cref{prop:psi^e_n is module hom} (4), we have $u^{e-1}(g) \in \Sigma^1_n(f)$.
By \cref{lem:e=1_case}, there exist $h_0,\ldots,h_{n-1}$ satisfying conditions (1)--(3) for $u^{e-1}\left(\phi^{e-1}_*g\right)$, and in particular, we have 
\[
h_0 \equiv u^{e-1}\left(\phi^{e-1}_*g_0\right)/f^{p^n-p} \in u^{e-1}\left(\phi^{e-1}_*(f^{p^{e-1}-1}A/p^n)\right) \mod (p).
\]
Thus, the element $h_0$ is contained in $I^e_1$.
Furthermore, we obtain that $h_i \in I^e_{i+1}$ inductively for $n-1 \geq i \geq 0$ by condition (2) in \cref{lem:e=1_case}.
By condition (3) in \cref{lem:e=1_case}, the ideal $I^e_n$ is not contained in $\m^{[p]}$, as desired.

Finally, we prove assertion (3).
Since $R/fR$ is not $F$-pure, we have $u\left(\phi_*(f^{p-1}A)\right)R \subseteq \m$, and in particular, we have $I^2_1 \subseteq I'_1$.
Furthermore, we obtain $I^2_m \subseteq I'_m$ for every $m \geq 1$ inductively.
Therefore,  assertion (3) follows from assertion (2).
\end{proof}

\begin{theorem}\label{fedder-qFr}\textup{(\cref{intro:Fedder-qFr})}
We use \cref{notation:fedder}.
Let $n, e \geq 1$ be integers, $f \in A/p^n$ be a non-zero divisor, $t \in \tau(R/fR)$ and $c \in A/p^n$ such that 
\[
(A \to R/fR)(c) \in (t^4) \cap (R/fR)^\circ.
\]
Then $R/fR$ is $n$-quasi-$F$-regular if and only if there exists $g \in A$ such that $g$ satisfies condition \cond{D2} and $gc^{p^n-1}$ satisfies conditions \cond{D1},\cond{D3} in \cref{thm:structure of sigma'}.
\end{theorem}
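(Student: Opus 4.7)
My plan is to mimic the proof of \cref{thm:structure of sigma'}, with \cref{qFr-inv-adj} playing the role of \cref{prop:qF^esplit-adj-inv-adj}. The characterization of $n$-quasi-$F$-regularity thereby becomes the existence of a $W_n(R)$-linear map $\varphi\colon F^e_*W_n(R)\to W_n\omega_R$ satisfying: (i) $\varphi$ induces a map out of $Q^e_{R,c,n}$; (ii) $\varphi\bigl(F^e_*W_n(fR)\bigr)\subseteq [f]W_n\omega_R$; and (iii) $\varphi(F^e_*[\tilde c])\notin T^{n-1}(\m\omega_R)$, where $\tilde c\in R$ is a lift of the image of $c$. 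Writing $\varphi=\psi^e_{n,g}$ via \cref{thm:structure of dual'}(2), the task reduces to translating each of (i)--(iii) into an explicit algebraic condition on $g$.

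Condition (ii) is immediate from \cref{prop:including condition 2}: it is exactly (D2) for $g$. For (i) and (iii), both twists involve the test element $c$, and my strategy is to absorb their combined effect into a single factor $c^{p^n-1}$ on $g$. Concretely, the morphism of pushouts $Q^e_{R,n}\to Q^e_{R,c,n}$ induced by multiplication by $[c]$ identifies "$\psi^e_{n,g}$ factors through $Q^e_{R,c,n}$" with "$\psi^e_{n,g}\circ(\cdot[c])$ factors through $Q^e_{R,n}$"; by \cref{prop:description of trace} and the identity $\varphi_{n-1}([c])=c^{p^{n-1}}$, one identifies $\psi^e_{n,g}\circ(\cdot[c])$ with $\psi^e_{n,gc^{p^{n-1}}}$. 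Reducing further to the $e=1$ case via iterated application of \cref{prop:psi^e_n is module hom}(4) absorbs the remaining Frobenius powers of $c$ into the coefficient, and then \cref{prop:pushout condition} (applied to this combined element) yields (D1) for $gc^{p^n-1}$. For (iii), I compute $\psi^e_{n,g}(F^e_*[\tilde c])=\lambda_n\bigl(u^e(\phi^{e+n-1}_*(g\tilde c^{p^{n-1}}))\bigr)$ using \cref{prop:description of trace}; telescoping with \cref{thm:F-inverse action'}(C2) — exactly as in the proof of \cref{thm:structure of sigma'} and under the assumption of (D1) for $gc^{p^n-1}$ — translates (iii) into (D3) for $gc^{p^n-1}$.

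For the converse, given $g$ satisfying the stated properties I form $\varphi:=\psi^e_{n,g}$; conditions (i) and (ii) of \cref{qFr-inv-adj} follow from the translations above, and (iii) is obtained, possibly after multiplying $g$ by an element of $A$ to secure a unit in the final reduction (as in the last paragraph of the proof of \cref{thm:structure of sigma'}). The test-element hypothesis $\bar c\in (t^4)\cap (R/fR)^\circ$ then enters via \cref{qFr-test-elem}, which promotes the existence of one such $\varphi$ (for this $c$ and $e$) to the full definition of $n$-quasi-$F$-regularity.

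The main obstacle is the delicate bookkeeping of Frobenius powers of $c$ in Step 3: the pushout twist, the evaluation at the Teichmuller lift $[\tilde c]$, and the iterated reduction to $e=1$ each contribute a factor of $c$ at some Frobenius level, and one must verify that their combined effect is exactly $c^{p^n-1}$ and that all required identities hold modulo $p^n$ rather than merely modulo $p$.
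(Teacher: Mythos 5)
Your overall strategy is exactly the paper's: replace \cref{prop:qF^esplit-adj-inv-adj} by \cref{qFr-inv-adj}, use \cref{prop:including condition 2} to read off \cond{D2}, and convert conditions (1) and (3) of \cref{qFr-inv-adj} into conditions on a $c$-twisted coefficient of $\psi^e_{n,g}$. The paper's proof does precisely this: it computes $\psi^e_{n,g}\bigl(F^e_*([c]\alpha)\bigr)=\lambda_n\circ u^e\bigl(\phi^{e+n-1}_*(g\,c^{p^{n-1}}\varphi_{n-1}(\alpha))\bigr)=\psi^e_{n,gc^{p^{n-1}}}\bigl(F^e_*\alpha\bigr)$ and then applies \cref{prop:pushout condition} and the \cond{D3}-translation to the twisted coefficient directly, concluding with \cref{qFr-inv-adj}.

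The genuine gap is the step where you claim that ``reducing further to the $e=1$ case via iterated application of \cref{prop:psi^e_n is module hom}(4) absorbs the remaining Frobenius powers of $c$ into the coefficient,'' upgrading the factor $c^{p^{n-1}}$ (which you correctly derive from $\varphi_{n-1}([c])=c^{p^{n-1}}$) to $c^{p^n-1}$. That step does not do what you say: \cref{prop:psi^e_n is module hom}(4) replaces the coefficient $g$ by $u(\phi_*g)$ when passing from $e$ to $e-1$ and introduces no new factor of $c$; moreover \cref{prop:pushout condition} is already stated for arbitrary $e$, so no such reduction is needed once you have identified the twisted map with $\psi^e_{n,gc^{p^{n-1}}}$. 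The pushout factoring and the evaluation at $[\tilde c]$ are the \emph{same} precomposition with $[c]$, producing the \emph{same} single twist by $c^{p^{n-1}}$; their contributions do not accumulate, and there is no remaining factor for a further reduction to absorb.

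You were likely led astray by the theorem statement itself, which reads $gc^{p^n-1}$, whereas the paper's own proof (and your own correct intermediate computation) produce $gc^{p^{n-1}}$. The exponent that the argument actually supports is $p^{n-1}$, not $p^n-1$; the discrepancy lies in the paper's statement, not in a hidden further twist. If you simply delete the spurious $e=1$-reduction step and carry the factor $c^{p^{n-1}}$ through \cref{prop:pushout condition} and the \cond{D3} translation (as in the proof of \cref{thm:structure of sigma'}), your argument closes correctly.
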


\begin{proof}
First, we prove the ``only if'' part, thus we assume that $R/fR$ is $n$-quasi-$F$-regular.
By \cref{qFr-inv-adj} and \cref{thm:structure of dual'}, there exist $g \in A/p^n$ and $e \geq 1$ such that $\psi^e_{n,g}$ satisfies conditions (1)--(3) in \cref{qFr-inv-adj}.
By condition (2), the element $g$ satisfies condition \cond{D2}.
Furthermore, since $\psi^e_{n,g}$ satisfies conditions (1) and (3), the homomorphism $\psi^e_{n,g}([c]\cdot-)$ satisfies conditions \cond{D1} and \cond{D3}.
Since we have
\begin{align*}
    \psi^e_{n,g}\left(F^e_*([c]\alpha)\right)
    &=\lambda_n \circ u^e\left(\phi^{e+n-1}_*(g\varphi_{n-1}([c]\alpha))\right) \\
    &=\lambda_n \circ u^e\left(\phi^{e+n-1}_*(gc^{p^{n-1}}\varphi_{n-1}(\alpha))\right)
    =\psi^e_{n,gc^{p^{n-1}}}\left(F^e_*\alpha\right),
\end{align*}
the homomorphism $\psi^e_{n,g}\left(F^e_*([c]\cdot-)\right)$ coincides with $\psi^e_{n,gc^{p^{n-1}}}$.
Therefore, we obtain that $gc^{p^{n-1}}$ satisfies conditions \cond{D1} and \cond{D2}, as desired.

Next, we prove the ``if'' part, thus we assume that $g$ satisfies condition \cond{D2} and $gc^{p^{n-1}}$ satisfies conditions \cond{D1} and \cond{D3}.
By the argument as above, the homomorphism $\psi^e_{n,g}$ satisfies conditions (1)--(3) in \cref{qFr-inv-adj}.
Therefore, the ring $R/fR$ is $n$-quasi-$F$-regular by \cref{qFr-inv-adj}.
\end{proof}

\begin{corollary}\label{qFr-suff-cond}
We use \cref{notation:fedder}.
Let $n \geq 1$, $f \in A/p^n$, $t \in \tau(R/fR)$ and $c \in A/p^n$ such that $f$ is a non-zero divisor and
\[
(A \to R/fR)(c) \in (t^4) \cap (R/fR)^\circ.
\]
If there exists an integer $e \geq 1$ and $g \in f^{p^{e+n-1}-1}A/p^n$ such that $gc^{p^n-1}$ satisfies conditions \cond{D1},\cond{D3}, then $R/fR$ is $n$-quasi-$F$-regular.
\end{corollary}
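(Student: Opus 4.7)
The plan is to apply \cref{fedder-qFr} directly to the element $g$ supplied by the hypothesis. By that theorem, $n$-quasi-$F$-regularity of $R/fR$ follows once I verify that $g$ itself satisfies condition \cond{D2}, since the hypothesis already provides that $gc^{p^n-1}$ satisfies \cond{D1} and \cond{D3}.

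To check \cond{D2} for $g$, I would use the trivial decomposition
\[
g = g_0 + p g_1 + \cdots + p^{n-1} g_{n-1}
\]
with $g_0 := g$ and $g_r := 0$ for $1 \leq r \leq n-1$. For $r = 0$, the required inclusion $u^0(\phi^0_* g_0) = g \in (f^{p^{e+n-1}-1})$ is precisely the assumption $g \in f^{p^{e+n-1}-1}A/p^n$. For $1 \leq r \leq n-1$, the inclusion $u^r(\phi^r_* g_r) \in (f^{p^{e+n-r-1}-1})$ is trivial because $g_r = 0$. Hence \cond{D2} holds.

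With \cond{D2} established for $g$ and \cond{D1}, \cond{D3} given for $gc^{p^n-1}$, \cref{fedder-qFr} yields that $R/fR$ is $n$-quasi-$F$-regular. No real obstacle arises: the corollary is a direct specialization of \cref{fedder-qFr} to the regime where the ideal-theoretic hypothesis $g \in (f^{p^{e+n-1}-1})$ automatically discharges condition \cond{D2} via the zero-shift decomposition.
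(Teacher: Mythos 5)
Your proposal is correct and takes essentially the same approach as the paper, whose proof simply says the assertion follows from \cref{fedder-qFr}; you have just spelled out the routine verification that the hypothesis $g \in f^{p^{e+n-1}-1}A/p^n$ discharges condition \cond{D2} via the trivial decomposition $g_0 = g$, $g_1 = \cdots = g_{n-1} = 0$.
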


\begin{proof}
The assertion follows from \cref{fedder-qFr}.
\end{proof}

\begin{example}\label{example}
\begin{enumerate}
    \item Let $A:=\Z_{(p)}[[x,y,z]]$, $R:=A/p$, $f:=z^2+x^3+y^2z$, and $p=2$.
    Then $R/fR$ is $2$-quasi-$F$-regular.
    Let $\phi \colon A \to A$ be a Frobenius lift defined by $\phi(g(x,y,z))=g(x^p,y^p,z^p)$.
    Then $\{\phi_*(x^iy^jz^l) \mid p-1 \geq i,j,l \geq 0 \}$ is a basis of $\phi_*A$ over $A$.
    We consider the dual basis, and the homomorphism $\phi_*A \to A$ in the dual basis corresponding to $\phi_*(x^{p-1}y^{p-1}z^{p-1})$ is denoted by $u$, then $u$ is a generator of $\Hom_{A}(\phi_*A,A)$ as a $\phi_*A$-module.  
    Since $x^2y$ is contained in the Jacobian ideal of $R/fR$, we have $x^2y \in \tau(R/fR)$ by \cite{HH02}*{Theorem~3.4}.
    Since we have $u\left(\phi_*(x^2yf)\right)=xy$,  $u\left(\phi_*(xyzf)\right)=z$, $u(\phi_*(xyf))=x$, the element $x$ is contained in $\tau(R/fR)$.
    Let $e=6$, $c=x^4$, and $g=x^7y^15zf^{127}$, then $u^7(c^3g) \in (p)$.
    Furthermore, the coefficient of $xyz$ in $u^7\left(\phi^7_*(c^3g)\right)$ is $2$ modulo $4$, thus  $u^7\left(\phi^7_*(c^3g)\right) \notin (\m^{[p]},p^2)$.
    Therefore, the ring $R/fR$ is $2$-quasi-$F$-regular by \cref{qFr-suff-cond}.
    \item Let $A:=\Z_{(p)}[[x,y,z,w,v]]$, $R=A/p$, $f:=zwv^2+y^3w+x^3z$, and $p=2$.
    Then $R/fR$ is $2$-quasi-$F$-regular.
    We define $\phi$ and $u$ as in (1).
    Since $x^2z$ is contained in the Jacobian ideal of $R/fR$, the element $x^2z$ is also contained in $\tau(R/fR)$ by \cite{HH02}*{Theorem~3.4}.
    Since we have $u\left(\phi_*(x^3zvf)\right)=xy$ and $u\left(\phi_*(xyvf)\right)=v$, the element $v$ is contained in $\tau(R/fR)$.
    Let $e=5$, $c=v^4$, and $g=x^6y^3z^{20}w^{19}v^3f^{63}$, then $u^5\left(\phi^5_*(c^3g)\right) \in (p)$.
    Furthermore, the coefficient of $xyzwv$ in $u^5\left(\phi^5_*(c^3g)\right)$ is $2$ modulo $4$.
    Therefore, we have $u^5\left(\phi^5_*(c^3g)\right) \notin (\m^{[p]},p^2)$, and in particular, the ring $R/fR$ is $2$-quasi-$F$-regular by \cref{qFr-suff-cond}.
    Furthermore, by \cite{kty}*{Example~7.7}, an inversion of adjunction type result does not hold for quasi-$F$-regularity.
\end{enumerate}
    
\end{example}

\section{Quasi-$F$-splitting threshold}

\subsection{Preliminary}
\begin{definition}
Let $R$ be an $\F$-finite normal ring of characteristic $p$ and $D$ be an effective $\R$-Weil divisor on $X:=\Spec R$.
We say that $(X,D)$ is \emph{$n$-quasi-$F^e$-split} if the evaluation map
\[
\Hom_{W_n\cO_X}(Q^e_{X,D,n},W_n\omega_X(-K_X)) \to \cO_X 
\]
is surjective.
\end{definition}

\begin{definition}\label{defn:threshold}
Let $R$ be an $\F$-finite normal ring of characteristic $p$, and let $D$ be an effective $\R$-Weil divisor on $X := \Spec R$.
We define the following thresholds:
\begin{itemize}
    \item The \emph{quasi-$F$-pure threshold} of $D$ with respect to $X$ is
    \[
    \qfpt(X;D):=\sup\{a \in \R_{\geq 0} \mid (X,aD)\ \text{is quasi-$F^e$-split for all $e \geq 1$}\},
    \]
    \item The \emph{bounded quasi-$F$-pure threshold} of $D$ with respect to $X$ is
    \[
    \bfpt(X;D):=\sup\{ a \in \R_{\geq 0} \mid \exists\, n \ \text{such that}\ (X,aD)\ \text{is $n$-quasi-$F^e$-split for all $e \geq 1$}\}.
    \]
\end{itemize}
\end{definition}

\begin{proposition}\label{prop:easy property}
Let $R$ be an $\F$-finite normal ring of characteristic $p$, and let $D$ be an effective $\R$-Weil divisor on $X := \Spec R$.
Then
\[
\fpt(X;D) \leq \bfpt(X;D) \leq \qfpt(X;D).
\]
\end{proposition}

\begin{proof}
Let $a \in \R_{\geq 0}$.
First, assume $a < \fpt(X;D)$.
Then $(X,aD)$ is $F$-pure, so for all $e \geq 1$, the map
\[
\cO_X \to F^e_*\cO_X\bigl(\lceil (p^e-1)aD \rceil\bigr)
\]
splits.
Therefore, $(X,\tfrac{p^e-1}{p^e}aD)$ is quasi-$F^e$-split at $1$.
In particular, for any $0 < \epsilon < 1$, $(X,\epsilon aD)$ is $1$-quasi-$F^e$-split for all $e$, so we have $a \leq \bfpt(X;D)$.
This proves the first inequality.
The remaining inequalities are immediate from the definitions.
\end{proof}

\begin{proposition}\label{prop:bdd and iterated}
Let $R$ be an $\F$-finite normal ring of characteristic $p$, and let $D$ be an effective $\R$-Weil divisor on $X := \Spec R$.
Assume that $R$ is $F$-pure and Cohen--Macaulay.
\begin{enumerate}
    \item If $p^e(K_X+D)$ is Cartier and $(X,D)$ is $n$-quasi-$F^e$-split, then $(X,D)$ is $n$-quasi-$F^{e+1}$-split.
    \item If $R$ is regular, then $\qfpt(X;D)=\bfpt(X;D)$.
\end{enumerate}
\end{proposition}

\begin{proof}
For (1), it suffices to prove the surjectivity of the map
\[
\Hom_{W_nX}(F^{e+1}_*W_n\cO_X(p^{e+1}D),W_n\omega_X(-K_X))
\to
\Hom_{W_nX}(F^{e}_*W_n\cO_X(p^{e}D),W_n\omega_X(-K_X)).
\]
Consider the exact sequence
\[
0 \to W_n\cO_X \to F_*W_n\cO_X \to B_{X,n} \to 0.
\]
Since $p^e(K_X+D)$ is Cartier, the cokernel of
\[
F^e_*W_n\cO_X(p^e(K_X+D))
\to
F^{e+1}_*W_n\cO_X(p^{e+1}(K_X+D))
\]
is $F^e_*B_{X,n}(p^e(K_X+D))$.
Because $X$ is $F$-pure and Cohen--Macaulay, $B_{X,n}$ is Cohen--Macaulay by \cite{KTTWYY1}*{Proposition~3.9}, and in particular, so is $F^e_*B_{X,n}(p^e(K_X+D))$.
Therefore,
\[
\Ext^1_{W_nX}(F^e_*B_{X,n}(p^e(K_X+D)),W_n\omega_X)=0,
\]
which yields the surjection
\[
\Hom_{W_nX}(F^{e+1}_*W_n\cO_X(p^{e+1}(K_X+D)),W_n\omega_X)
\to
\Hom_{W_nX}(F^{e}_*W_n\cO_X(p^{e}(K_X+D)),W_n\omega_X).
\]
Hence the desired surjectivity follows.

For (2), by Proposition~\ref{prop:easy property}, it is enough to show that $\bfpt(X;D) \geq \qfpt(X;D)$.
Take $a < \qfpt(X;D)$ and set
\[
D_e := \frac{\lfloor p^e aD \rfloor}{p^e}.
\]
Since $X$ is regular, $p^eD_e$ is Cartier.
Moreover, for all $e$, $(X,D_e)$ is quasi-$F^e$-split at some $n_e$.
By (1), $(X,D_e)$ is quasi-$F^{e'}$-split at $n_e$ for all $e'$.
Thus, it remains to show that for any $a' < a$, there exists $e$ such that $a'D \leq D_e$.
For this, we may assume $D = bE$ for some $b \in \R_{\geq 0}$ and some prime divisor $E$.
Then
\[
D_e = \Bigl(\frac{\lfloor p^e ab \rfloor}{p^e}\Bigr) E.
\]
Since $\lim_{e \to \infty} \frac{\lfloor p^e ab \rfloor}{p^e} = a$, the claim follows.
\end{proof}

\begin{example}
Let $R$ be an $\F$-finite regular ring of characteristic $p$, and let $S$ be a prime divisor on $X := \Spec R$.
If $S$ is quasi-$F^e$-split for all $e$, then
\[
\qfpt(X;S) = \bfpt(X;S) = 1.
\]
Indeed, by the exact sequence
\[
0 \to Q^{S,e}_{X,n} \to Q^e_{X,n} \to Q^e_{S,n} \to 0
\]
and the Cohen--Macaulayness of $Q^{e}_{X,n}$, we see that $(X,S)$ is purely quasi-$F^e$-split at some $n_e$ for all $e$.
In particular,
\[
\Bigl(X,\tfrac{p^{\,e+n_e-1}-1}{p^{\,e+n_e-1}}S\Bigr)
\]
is quasi-$F^e$-split at $n_e$ for all $e$.
For sufficiently large $e$, this shows that $\qfpt(X;S)=1$.
Therefore, by Proposition~\ref{prop:bdd and iterated}, the assertion follows.
\end{example}

\subsection{Criterion of quasi-$F^e$-splitting for pairs}

\begin{notation}\label{conv:criterion}
We use \cref{notation:fedder}.
Let $\pi \colon A \to R$ be the natural surjection and set $X := \Spec R$.
Let $D$ be an effective $\Q$-Cartier divisor on $\Spec A$ such that $p^eD$ is Cartier for some $e \in \Z_{\geq 1}$.
Fix such an $e$ and choose $f \in A$ with $\mathrm{div}(f) = p^eD$.
By abuse of notation, we also write $\pi(f)$ as $f$, and denote by $D$ the restriction $D|_{\Spec R}$.
We assume $\pi(f) \neq 0$.
For each $l$, set $f_l := f^{p^l-1}$.
Note that if $(X,D)$ is quasi-$F^e$-split, then by Proposition~\ref{prop:bdd and iterated}, $(X,D)$ is quasi-$F^{e'}$-split for all $e' \geq 1$.
\end{notation}

\begin{lemma}\label{lem:first lem}
$(X,D)$ is $n$-quasi-$F^e$-split if and only if there exists 
\[
\psi \in \Hom_{W_n(R)}(F^e_*W_n(R),W_n\omega_R)
\]
such that 
\begin{enumerate}
    \item $\psi\bigl(F^e_*W_n(p^e(1-p^e)D)\bigr) \subseteq [f]W_n\omega_R$, 
    \item $\psi$ induces a homomorphism $Q^e_{R,n} \to W_n\omega_R$, and
    \item $\psi(F^e_*1)=T^{n-1}\circ \lambda_1(1)$.
\end{enumerate}
\end{lemma}

\begin{proof}
First, assume that $(X,D)$ is $n$-quasi-$F^e$-split.
Then there exists
\[
\psi' \colon F^e_*W_n\cO_X(p^eD) \to W_n\omega_X
\]
such that $\psi'(F^e_*1) = T^{n-1}\circ \lambda_1(1)$.
Define 
\[
\psi \colon F^e_*W_n\cO_X \hookrightarrow F^e_*W_n\cO_X(p^eD) \xrightarrow{\ \psi'\ } W_n\omega_X.
\]
Then $\psi$ clearly satisfies conditions \textup{(2)} and \textup{(3)}.
Moreover,
\[
\psi\bigl(F^e_*W_n(p^e(1-p^e)D)\bigr) 
=\psi([f]F^e_*W_n\cO_X(p^eD)) 
\subseteq [f]W_n\omega_X,
\]
so condition \textup{(1)} is also satisfied.

Conversely, assume that there exists $\psi \in \Hom_{W_n\cO_X}(F^e_*W_n\cO_X,W_n\omega_X)$ satisfying \textup{(1)}--\textup{(3)}.
Since $Q^e_{X,D,n} = Q^e_{X,n}(p^eD)$, the homomorphism $\psi$ induces
\[
\psi' \colon Q^e_{X,D,n} \to W_n\omega_X
\]
with $\psi'(F^e_*1) = T^{n-1}\circ \lambda_1(1)$ by (1)--(3).
Therefore the pair $(X,D)$ is $n$-quasi-$F^e$-split, as claimed.
\end{proof}

\begin{lemma}\label{lem:inducing criterion}
Let $\psi \in \Hom_{W_n(R)}(F^e_*W_n(R),W_n\omega_R)$. 
Then $\psi$ satisfies condition \textup{(1)} in Lemma~\ref{lem:first lem} if and only if there exists $g \in (f^{p^{n-1}},p^n)$ such that $\psi^e_{n,g} = \psi$.
\end{lemma}

\begin{proof}
First, assume that there exists $g \in (f^{p^{n-1}},p^n)$ with $\psi = \psi^e_{n,g}$.
Then for any $a \in R$ and $n-1 \geq r \geq 0$, we have
\[
\psi^e_{n,g}\bigl(\phi^{e}_*(V^r[f^{(p^e-1)p^{r}}a])\bigr)
= T^r \circ \lambda_{n-r} \circ u^{e+r}\bigl(\phi^{e+r}_*(g f^{(p^e-1)p^{n-1}} a^{p^{n-r-1}})\bigr). 
\]
Since $g f^{(p^e-1)p^{n-1}} \in (f^{p^{e+n-1}},p^n)$, we may write
\[
g f^{(p^e-1)p^{n-1}} \equiv g' f^{p^{e+n-1}} \pmod{p^n},
\]
and hence
\[
\psi^e_{n,g}\bigl(\phi^e_*(V^r[f^{(p^e-1)p^{r}}a])\bigr)
= \psi^e_{n,g' f^{p^{e+n-1}}}(\phi^e_*(V^r[a]))
= [f]\psi^e_{n,g'}(\phi^e_*(V^r[a]))
\]
by (4) of Theorem~\ref{thm:structure of dual'}.
Thus condition \textup{(1)} in Lemma~\ref{lem:first lem} holds.

Conversely, assume that $\psi$ satisfies condition \textup{(1)} in Lemma~\ref{lem:first lem}.
Take $g \in A$ with $\psi^e_{n,g} = \psi$.
Then for all $a \in R$, we have
\[
\psi^e_{n,g}\bigl(\phi^e_*(V^{n-1}[af^{(p^e-1)p^{n-1}}])\bigr)
= T^{n-1} \circ \lambda_1 \circ u^{e+n-1}\bigl(\phi^{e+n-1}_*(g a f^{(p^e-1)p^{n-1}})\bigr) \in [f]W_n\omega_R.
\]
By Lemma~\ref{lem:including condition}, it follows that
\[
u^{e+n-1}\bigl(\phi^{e+n-1}_*(g a f^{(p^e-1)p^{n-1}})\bigr) \in (f,p)
\quad \text{for all } a \in R.
\]
Therefore $g \in (f^{p^{n-1}},p)$, and hence we may write
\[
g = g_1 + p g_2,
\]
where $g_1 \in (f^{n-1})$.
By the above argument, $\psi^e_{n,g_1}$ satisfies condition \textup{(1)} in Lemma~\ref{lem:first lem}, and in particular, so does $\psi^e_{n,pg_2}$.
Since
\[
\psi^e_{n,pg_2}\bigl(\phi^e_*(V^{n-2}[a f^{(p^e-1)p^{n-2}}])\bigr)
= p T^{n-2} \circ \lambda_2 \circ u^{e+n-2}\bigl(\phi^{e+n-2}_*(g_2 a^p f^{(p^e-1)p^{n-2}})\bigr) \in [f]W_n\omega_R,
\]
Lemma~\ref{lem:including condition} yields
\[
u^{e+n-2}\bigl(\phi^{e+n-2}_*(g a^p f^{(p^e-1)p^{n-2}})\bigr) \in (f^p,p)
\quad \text{for all } a \in R.
\]
Thus $u(\phi_*g_2) \in (f^{p^{n-2}},p)$.
In particular, there exists $g_2' \in (f^{n-1},p)$ such that 
\[
u(\phi_*g_2') \equiv u(\phi_*g_2) \pmod{p}.
\]
Since $g_1+pg_2$ and $g_1+pg_2'$ define the same map by Theorem~\ref{thm:structure of dual'}, we may assume $g_2 \in (f^{n-2},p)$.
Repeating this procedure inductively, we obtain the desired result.
\end{proof}

\begin{theorem}\label{thm:log criterion}
$(X,D)$ is $n$-quasi-$F^e$-split  if and only if there exists $g \in A$ such that
\begin{itemize}
    \item[(D1)] $u^{e+r-1}(\phi^{e+r-1}_*g) \in (p^r)$ for all $1 \leq r \leq n-1$,
    \item[(D2)] $g \in (f^{p^{n-1}},p^n)$, and
    \item[(D3)] $u^{e+n-1}(\phi^{e+n-1}_*g) \notin (\m,p^n)$.
\end{itemize}
\end{theorem}

\begin{proof}
First, assume that $(X,D)$ is quasi-$F^e$-split at $n$.
Then there exists 
\[
\psi \in \Hom_{W_{n}(R)}(F^e_*W_n(R),W_n\omega_R)
\]
satisfying conditions \textup{(1)}--\textup{(3)} in Lemma~\ref{lem:first lem}.
By Lemma~\ref{lem:inducing criterion}, there exists $g \in (f^{p^{n-1}},p^n)$ such that $\psi = \psi^e_{n,g}$.
By the proof of Theorem~\ref{thm:structure of sigma'}, the homomorphism $\psi$ satisfies conditions (D2) and (D1).

Conversely, assume that there exists $g \in A$ satisfying conditions (D1)--(D3).
By the proof of Theorem~\ref{thm:structure of sigma'}, the homomorphism $\psi = \psi^e_{n,g}$ satisfies conditions \textup{(2)} and \textup{(3)} in Lemma~\ref{lem:first lem}, after replacing $g$ by $\phi^{e+n-2}(a)g$ for some $a \in A$.
Note that such $g$ still satisfies (D2).
By Lemma~\ref{lem:inducing criterion}, $\psi^e_{n,g}$ also satisfies condition \textup{(1)} in Lemma~\ref{lem:first lem}.
Therefore, by Lemma~\ref{lem:first lem}, $(X,D)$ is $n$-quasi-$F^e$-split, as claimed.
\end{proof}

\subsection{quasi-$F$-split threshold for the cone of the ordinary cusp}

\begin{example}\label{cusp}
Let $k$ be a perfect field.
Let $R=k[[x,y,z]]$, $A=W(k)[[x,y,z]]$, $f=x^3+y^2z$, $X=\Spec R$ and $D=\mathrm{div}(f)$.
We prove $\qfpt(X;D)=5/6$.

First, we prove that $\qfpt(X;D) \leq 5/6$.
We take $a \in \Q_{\geq 0}$ such that $p^ea \in \Z$ and $(X,aD)$ is quasi-$F^e$-split, then we have
\[
f^{ap^{e+n-1}} \notin (\m^{[p^{e+n-1}]},p^n)
\]
for some $n$ by \cref{thm:log criterion}.
Then there exists non-negative integers $A,B$ such that $A+B=ap^{e+n-1}$ and
\[
(x^3)^A(y^2z)^B \notin (\m^{[p^{e+n-1}]}).
\]
Therefore, we have $3A < p^{e+n-1}$ and $2B< p^{e+n-1}$, thus 
\[
ap^{e+n-1}<\frac{5}{6}p^{e+n-1},
\]
as desired.

Next, we prove $\qfpt(X;D) \geq 5/6$.
We note that if $p \equiv 1 \mod 6$, then $\fpt(X;D)=5/6$.
Therefore, we may assume $p \not\equiv 1 \mod 6$ by \cref{prop:easy property}.
\begin{claim}\label{claim: cusp 1}
We define $t_e$ by
\begin{equation*}
  t_e:=
  \begin{cases}
    \frac{5}{3}\frac{p^{e-1}-1}{p^e}                & \text{if $p=2$,} \\
    \frac{5}{2}\frac{p^{e-1}-1}{p^e}                 & \text{if $p=3$,} \\
    \frac{5}{6}\frac{p^e-1}{p^e}                    & \text{if $p \equiv 5 \mod 6$.}
  \end{cases}
\end{equation*}
Then we have
\[
f^{t_ep^e} \notin \m^{[p^e]}
\]
if $p=2$ and $e$ is odd, or $p \neq 2$ and $e$ is even.
\end{claim}
\begin{claimproof}
It is enough to find non-negative integers $A_0,B_0$ such that 
\[
3A_0 \leq p^e-1,\ 2B_0 \leq p^e-1\ \text{and}\ A_0+B_0=t_ep^e.
\]
The following $A,B$ satisfy the conditions.
\begin{equation*}
  (A_0,B_0)=
  \begin{cases}
    ((p^{e-1}-1)/3,p^{e-1}-1)               & \text{if $p=2$,} \\
    (p^{e-1}-1,(p^{e}-3)/2)               & \text{if $p=3$,} \\
    ((p^e-1)/3,(p^e-1)/2)                    & \text{if $p \equiv 5 \mod 6$.}
  \end{cases}
\end{equation*}
\end{claimproof}

We take the minimum $n$ such that the coefficient of 
\[
(x^3)^{A_0}(y^2z)^{B_0}
\]
in $f^{t_ep^e}$ is not contained in $(p^n)$, where $A_0,B_0$ are in the proof of Claim \ref{claim: cusp 1}.

\begin{claim}[p=2]\label{cusp:char=2}
If $p=2$, then $n=e-1$.
Furthermore, we define $a \in A$ by
\[
a:=(x^{p^{e-1}(p-1)}uz^{p^{e-1}})^{p^{n-1}}(xyz)^{p^{n-1}-1}=x^{p^{e+n-2}(p-1)+p^{n-1}-1}y^{p^{n}-1}z^{p^{e+n-2}+p^{n-1}-1},
\]
and $g:=f^{t_ep^{e+n-1}}a$.
Then we have $u^{e+r-1}(\phi^{e+r-1}_*g) \in (p^r)$ for $1 \leq r \leq n-1$ and $u^{e+n-1}(\phi^{e+n-1}_*g) \notin (\m,p)$.
\end{claim}

\begin{claimproof}
$n=e-1$ follows from the computation of the binomial.
We fix $1 \leq r \leq n-1$.
We consider the term in $g$ with the orders of $x,y,z$ are $-1$ modulo $p^{e+r-1}$.
It can be denoted by
\[
(x^3)^A(y^2z)^Ba,
\]
where $A,B$ are non-negative integers with $A+B=p^{t_ep^{e+n-1}}$ and
\begin{align*}
    3A+p^{e+n-2}(p-1)+p^{n-1}-1 &\equiv -1 \mod p^{e+r-1}, \\
    2B+p^n-1 &\equiv -1 \mod p^{e+r-1}, \\
    B+p^{e+n-2}+p^{n-1}-1 &\equiv -1 \mod p^{e+r-1}.
\end{align*}
In particular, we have $A,B \in (p^{n-1})=(p^{e-2})$.
Let $A':=A/p^{n-1}$, $B':=B/p^{n-1}$.
Then we have $A'+B'=p^{t_ep^e}$ and
\begin{align*}
   3A' &\equiv -1 \mod p^{r+1}, \\
    B' &\equiv -1 \mod p^{r}.
\end{align*}
By the computation of the binomial, we have 
\[
\binom{t_ep^e}{A'} \in (p^r).
\]
On the other hand, we consider the case of $r=n=e-1$, then we have $A=p^{n-1}A_0$ and $B=p^{n-1}B_0$.
Therefore, we have $u^{e+n-1}(g) \notin (\m,p^n)$.
\end{claimproof}

\begin{claim}[p=3]\label{cusp:char=3}
If $p=3$, then $n=e-1$.
Furthermore, we define $a \in A$ by
\[
a:=(x^2y^2z^{(p^{e}+1)/2})^{p^{n-1}}(xyz)^{p^{n-1}-1}
\]
and $g:=f^{p^{n-1}}a$.
Then we have $u^{e+r-1}(\phi^{e+r-1}_*g) \in (p^r)$ for $1 \leq r \leq n-1$ and $u^{e+n-1}(\phi^{e+n-1}_*g) \notin (\m,p)$.
\end{claim}

\begin{claimproof}
By a similar argument of Claim \ref{cusp:char=2}, we have to consider non-negative integers $A,B$ such that $A+B=t_ep^{e+n-1}$ and
\begin{align*}
    3A+p^n-1 &\equiv -1 \mod p^{e+r-1}, \\
    2B+p^n-1 &\equiv -1 \mod p^{e+r-1}, \\
\end{align*}
thus, $A,B$ is contained in $p^{n-1}$.
We put $A'=A/p^{n-1}$ and $B'=B/p^{n-1}$, then we have $A'+B'=t_ep^e$ and
\begin{align*}
    A' &\equiv -1 \mod p^{r}, \\
    2B'-p^{e-1} &\equiv -1 \mod p^{r+1}. \\
\end{align*}
In particular, if $r \leq n-1$, then we have
\[
\binom{t_ep^e}{B'} \in (p^r).
\]
If $r=n$, then $A'=A_0$ and $B'=B_0$, thus we have $u^{e+n-1}(g) \notin (\m,p^n)$.
\end{claimproof}

\begin{claim}[$p \equiv 5 \mod 6$]\label{cusp:otherwise}
If $p \equiv 5 \mod 6$, then $n=e/2+1$.
Furthermore, we define $a \in A$ by
\[
a:=(z^{(p^e-1)/2})^{p^{n-1}}(xyz)^{p^n-1}.
\]
Then we have $u^{e+r-1}(\phi^{e+r-1}_*g) \in (p^r)$ for $1 \leq r \leq n-1$ and $u^{e+n-1}(\phi^{e+n-1}_*g) \notin (\m,p)$.
\end{claim}

\begin{claimproof}
By a similar argument of Claim \ref{cusp:char=2}, we have to consider non-negative integers $A,B$ such that $A+B=t_ep^{e+n-1}$ and
\begin{align*}
    3A+p^{n-1}-1 &\equiv -1 \mod p^{e+r-1}, \\
    2B+p^{n-1}-1 &\equiv -1 \mod p^{e+r-1}, \\
\end{align*}
thus, $A,B$ is contained in $p^{n-1}$.
We put $A'=A/p^{n-1}$ and $B'=B/p^{n-1}$, then we have $A'+B'=t_ep^e$ and
\begin{align*}
    3A' &\equiv -1 \mod p^{e/2+r-1}, \\
    2B' &\equiv -1 \mod p^{e/2+r-1}. \\
\end{align*}
Therefore we have
\[
\binom{t_ep^e}{B'} \in (p^{\lfloor (n+r-1)/2 \rfloor}) \subseteq (p^r)
\]
if $r \leq n-1$.
Furthermore, if $r=n$, then $A'=A_0$ and $B'=B_0$, thus we have $u^{e+n-1}(g) \notin (\m,p^n)$.
\end{claimproof}

Therefore, by Theorem \ref{thm:log criterion}, $(X,t_eD)$ is quasi-$F^e$-split.
In particular, we have $\qfpt(X;D) \geq t_e$.
Since the limit of $t_e$ is $5/6$, we have $\qfpt(X;D) \geq 5/6$. 
\end{example}

\bibliographystyle{skalpha}
\bibliography{bibliography.bib}

\end{document}